\newtheorem{theorem}{Theorem}[section]
\newtheorem{lemma}[theorem]{Lemma}
\newtheorem{proposition}[theorem]{Proposition}
\newtheorem{corollary}[theorem]{Corollary}
\theoremstyle{definition}
\newtheorem{definition}[theorem]{Definition}
\newtheorem{example}[theorem]{Example}
\newtheorem{remark}[theorem]{Remark}
\theoremstyle{remark}
\newcommand{\ZZ}{\mathbb{Z}}
\newcommand{\QQ}{\mathbb{Q}}
\newcommand{\CC}{\mathbb{C}}
\newcommand{\bA}{\mathbf{A}}
\DeclareMathOperator{\Log}{Log}
\DeclareMathOperator{\GL}{GL}
\DeclareMathOperator{\Mat}{Mat}
\DeclareMathOperator{\Frob}{Frob}
\DeclareMathOperator{\Pic}{Pic}
\DeclareMathOperator{\Gal}{Gal}
\DeclareMathOperator{\Hom}{Hom}
\DeclareMathOperator{\rank}{rank}
\DeclareMathOperator{\Exp}{Exp}
\DeclareMathOperator{\Id}{Id}
\DeclareMathOperator{\Lie}{Lie}
\DeclareMathOperator{\ad}{ad}
\newcommand{\sep}{\mathrm{sep}}
\newcommand{\tr}{\mathrm{tr}}
\newcommand{\inorm}[1]{{\lvert #1 \rvert}_{\infty}}
\begin{document}
	
	\title[Special values of Goss $L$-series attached to Drinfeld modules of rank 2]{Special values of Goss $L$-series attached to Drinfeld modules of rank 2}
	
	\author{O\u{g}uz Gezm\.{i}\c{s}}
	\address{Department of Mathematics, National Tsing Hua University, Hsinchu City 30042, Taiwan R.O.C.}
	\email{gezmis@math.nthu.edu.tw}

	\thanks{This project was  supported by MoST Grant 109-2811-M-007-553.}
	
	\subjclass[2010]{Primary 11G09, 11M38}

	\begin{abstract}
		Inspired by the classical setting, Goss defined $L$-series attached to Drinfeld modules. In this paper, for a fixed choice of a power $q$ of a prime number and a given Drinfeld module $\phi$ of rank 2 with a certain condition on its coefficients, we give explicit formulas for the values of Goss $L$-series attached to $\phi$ at positive integers $n$ such that $2n+1\leq q$ in terms of polylogarithms and coefficients of the logarithm series of $\phi$.
	\end{abstract}
	
	\keywords{Drinfeld modules, $L$-series, $t$-modules}
	
	\maketitle
	\section{Introduction}
	\subsection{Background and Motivation}
	One of the major sources of constructing $L$-functions is related to Galois representations. For a number field $F$, let $\bar{F}$ be its algebraic closure and $G_F$ be its absolute Galois group. Let $l$ be a prime number. Consider a collection $\rho:=(\rho_l)$ of $l$-adic representations which are  continuous homomorphisms $\rho_l:G_F\to \GL(V_l)$ where $V_l$ is a finite dimensional $\QQ_l$-vector space. Then we say $\rho$ forms a strictly compatible system if there exists a finite set $U$ of places of $F$ such that 
	\begin{itemize}
		\item[(i)] For all $\mu\not \in U$ and all $l$ relatively prime to $\mu$, $\rho_l$ is unramified at $\mu$.
		\item[(ii)] For such $\mu$ and $l$, the polynomial 
		\[
		P_\mu(X):=\det(1-X\rho_l(\Frob_\mu) \ \ |V_l)
		\]
		has coefficients in $\QQ$ and is independent of $l$.
	\end{itemize}
	For example, let $E$ be an abelian variety of dimension $g$ over $F$ and for any $i\in \mathbb{Z}_{\geq 0}$, $E[l^i]$ be the group of all $l^i$-torsion points of $E$ in $\bar{F}$. Then one can consider $V_l$ as the following vector space
	\[
	V_l:=\lim_{\substack{\leftarrow\\i}}E[l^i]\otimes_{\mathbb{Z}_l}\QQ_l\cong \QQ_l^{2g}
	\]
	to see that the family $\rho=(\rho_l)$ of representations $\rho_l:G_F\to \GL_{2g}(V_l)$ induced from the continuous action of $G_F$ on $E[l^i]$ forms a strictly compatible system (see \cite{Del} and \cite{LP95} for details).
	
	For each such system $\rho=(\rho_l)$ of Galois representations, one can assign the $L$-function
	\[
	L_{U}(\rho,s):=\prod_{\mu}P_\mu(\mathcal{N}\mu^{-s})^{-1}
	\]
	where $\mu$ runs over all finite places not in $U$ and $\mathcal{N}\mu$ is the norm of the place $\mu$ of $F$. The function $L_{U}(\rho,s)$ converges to an analytic function for $s\in \CC$ when the real part $\Re(s)$ of $s$ is sufficiently large. 
	We refer the reader to \cite{Del} and \cite{Ser} for further details about the subject.

	\subsection{Drinfeld $A$-modules and $L$-series}
	In the present paper, we focus on the special values of an analogue of aforementioned $L$-series in the positive characteristic case whose construction is due to Goss \cite{Goss2}. Let $\mathbb{F}_q$ be the finite field with $q$ elements and $\theta$ be an independent variable over $\mathbb{F}_q$. We define $A$ to be the set of polynomials in $\theta$ with coefficients in $\mathbb{F}_q$ and $A_{+}$ to be the set of monic polynomials in $A$. Let $K$  be the fraction field of $A$ and $K_{\infty}$ be the completion of $K$ at the infinite place with respect to the norm $|\cdot|_{\infty}$ normalized so that $\inorm{\theta}=q$. We also set $\CC_{\infty}$ to be the completion of a fixed algebraic closure of $K_{\infty}$. 
	
	Let $K^{\text{sep}}$ be the separable closure of $K$ in $\CC_{\infty}$. Let $t$ be another independent variable and set $\bA:=\mathbb{F}_q[t]$. For any monic irreducible polynomial $w$ of $\bA$, we define $\mathbf{K}_w$ to be the completion of $\mathbb{F}_q(t)$ at $w$. Consider a family $\rho=(\rho_{w})$ of continuous representations of $\Gal(K^{\sep}/K)$ on a finite dimensional $\mathbf{K}_w$-vector space $V_w$. For any prime element $v$ of $A_{+}$, let us set $\Frob_v$ to be the geometric Frobenius at $v$. We say $\rho$ forms a strictly compatible system if there is a finite set $U'$ of primes of $A_{+}$ such that 
	\begin{itemize}
		\item[(i)] For all $v\not \in U'$ and all $w_{|t=\theta}\in A$ relatively prime to $v$, $\rho_{w}$ is unramified at $v$.
		\item[(ii)] For such $v$ and $w$, the polynomial
		\[
		P_v(X):=\det(1-X\rho_w(\Frob_v) \ \ |V_w)
		\]
		has coefficients in $\mathbb{F}_q(t)$ and is independent of $w$. 
	\end{itemize}
	Analogously, we define the $L$-function $L_{U'}(\rho,n)$ corresponding to a strictly compatible system $\rho=(\rho_w)$ of representations  $\rho_w:\Gal(K^{\sep}/K)\to \GL(V_w)$ by
	\begin{equation}\label{E:lfunct}
	L_{U'}(\rho,n):=\prod_{v\not \in U'}P_{v}(v_{|\theta=t}^{-n})^{-1}
	\end{equation}
	where $v$ runs over prime elements of $A_{+}$ not in $U'$.
	For integer values of $n$, the function $L_{U'}(\rho,n)$ converges in the Laurent series ring $\mathbb{F}_q((1/t))$ when $n$ is sufficiently large.  For further details, we refer the reader to \cite{BP09} and  \cite{TagWan}. 
	
	We point out that throughout the paper, by a slight abuse of notation, we continue to denote the value constructed by setting $t=\theta$ in $L_{U'}(\rho,n)$ by the same notation and hence our $L$-values will converge in $K_{\infty}$.

	Let $L$ be a field extension of $K$ in $\CC_{\infty}$. We define the twisted power series ring $L[[\tau]]$ with the rule $\tau c=c^q\tau$ for all $c\in L$ and set $L[\tau]$ to be the subring of $L[[\tau]]$ containing only polynomials in $\tau$.
	
	A Drinfeld $A$-module $\phi$ of rank $r$ is an $\mathbb{F}_q$-linear ring homomorphism $\phi:A\to L[\tau]$ defined by 
	\begin{equation}\label{D:drinfeld}
	\phi_{\theta}=A_0+A_1\tau+\dots+A_r\tau^r
	\end{equation}
	so that $A_0=\theta$ and $A_r\neq 0$. For each $0\leq i \leq r$, we call $A_i$ the $i$-th coefficient of $\phi$.
	
	One can assign the exponential series $\exp_{\phi}=\sum_{i\geq 0} \xi_i\tau^i\in L[[\tau]]$ to  $\phi$ subject to the condition that $\xi_0=1$ and $\exp_{\phi}\theta=\phi_{\theta}\exp_{\phi}$. The logarithm series of $\phi$ 
	\[
	\log_{\phi}=\sum_{i\geq 0}\gamma_{i}\tau^i\in L[[\tau]]
	\]
	is defined with respect to the condition that $\gamma_0=1$ and $\theta \log_{\phi}=\log_{\phi}\phi_{\theta}$. It is also the formal inverse of the exponential series $\exp_{\phi}$ in $L[[\tau]]$. 
	
	One of the examples of Drinfeld $A$-modules is the Carlitz module $C$ given by 
	\[
	C_{\theta}=\theta+\tau
	\]
	and its relation with the class field theory has been studied by Carlitz \cite{C35}, \cite{C38} and Hayes \cite{H74}. 
	
	For any non-negative integer $n$, we set $[n]:=1$ if $n=0$ and $[n]:=\theta^{q^n}-\theta$ otherwise. The exponential series  $\exp_{C}$ of the Carlitz module is defined by 
	\[
	\exp_{C}=\sum_{i\geq 0}\frac{\tau^i}{D_i}\in K[[\tau]]
	\]
	where $D_0:=1$ and $D_i:=[i]D_{i-1}^q$ for $i\geq 1$. Furthermore the logarithm series $\log_{C}$ can be given by 
	\[
	\log_{C}=\sum_{i\geq 0}\frac{\tau^i}{L_i}\in K[[\tau]]
	\]
	where $L_0:=1$ and $L_i=(-1)^{i}[i][i-1]\dots[1]$ when $i\geq 1$. Using the coefficients of $\log_{C}$, one can also define the $n$-th polylogarithm function $\log_{n}$ given by
	\[
	\log_{n}(z)=\sum_{i\geq 0}\frac{z^{q^i}}{L_i^{n}}
	\]
	whenever $\inorm{z}<nq/(q-1)$. For more information on Drinfeld $A$-modules, we refer the reader to \cite[Sec. 3 and 4]{Goss} and \cite[Sec. 2 and 3]{Thakur}.
	
	Taelman \cite{Taelman2} introduced effective $t$-motives which can be seen as a generalization of Anderson $t$-motives defined in \cite{And86}. Using Anderson's theory \cite{And86}, one can show that for every Drinfeld $A$-module $\phi$, there exists a unique corresponding effective $t$-motive $M_{\phi}$ up to isomorphism. Furthermore in \cite{G01}, Gardeyn proved that one can construct a strictly compatible system of Galois representations $\rho=(\rho_{w})$ attached to $M_{\phi}$ with a certain choice of the $\textbf{K}_w$-vector space $V_w$ (see \S 2.3 for definitions and details). 
	
	Let $n$ be a positive integer and $L_{U'}(M_{\phi},n)$ be the value of the $L$-function defined as in \eqref{E:lfunct} corresponding to the system of Galois representations $\rho=(\rho_{w})$ attached to $M_{\phi}$.  Our main purpose in the present paper is to study certain special values of the $L$-function $L(M_{\phi},n):=L_{U'}(M_{\phi},n)$ when $\phi$ is a Drinfeld $A$-module of rank 2  defined under some conditions on its coefficients. To motivate our main result, we first explain the well-known Carlitz module case: Consider the $q$-expansion of $n$ given by $n=\sum n_jq^j$ where $0\leq n_j \leq q-1$ and $n_j=0$ for $j\gg 0$. Set $\Gamma_{n+1}:=\prod_{j\geq 0} D_j^{n_j}\in A$. Thanks to the results of Hsia and Yu \cite[Thm. 3.1]{HY} and Anderson and Thakur \cite[Thm. 3.8.3]{AndThak90}, we know that 
	\[
	L(M_C,n+1)=\sum_{a\in A_{+}}\frac{1}{a^{n}}=\frac{1}{\Gamma_{n}}\sum_{i=0}^{m}h_{j}\log_{n}(\theta^j)
	\]
	for some $h_j\in A$ and $m<nq/(q-1)$.

	\subsection{The Main Result} 
	Let $\phi$ be a Drinfeld $A$-module of rank 2 defined by
	\begin{equation}\label{E:rank22}
	\phi_{\theta}=\theta+a\tau+b\tau^2
	\end{equation}
	where $a\in L$ and $b\in L\setminus\{0\}$. For any finite set $S\subset\mathbb{Z}_{\geq 0}$ and a non-negative integer $j$, let us define $S+j:=\{i+j:i\in S\}$. Set $\mathcal{P}_2(0):=\{(\emptyset,\emptyset)\}$ and for any $n\geq 1$, we define
	\[
	\mathcal{P}_2(n):=\{(S_1,S_2):S_1\cap S_2=\emptyset \text{ and } S_i\subset\{0,1,\dots,n-1\},\ \  i=1,2\}
	\]
	to be the set of tuples $(S_1,S_2)$ such that $S_1$, $S_2$ and $S_2+1$ are distinct and form a partition of $\{0,1,\dots,n-1\}$. We call the elements of $\mathcal{P}_2(n)$ shadowed partitions. For any positive integer $n$, we also set $\mathcal{P}_2^{1}(n)$ to be the set of shadowed partitions $(S_1,S_2)\in \mathcal{P}_2(n)$ such that $0\in S_1$.
	
	Define $w_1(S):=0$ if $S=\emptyset$ and $w_1(S):=\sum_{i\in S}q^i$ otherwise. Furthermore, for any finite set $S\subset \mathbb{Z}_{\geq 0}$ with $0\in S$, we let  $w_2(S):=0$ if $S=\{0\}$ and $w_2(S):=\sum_{i\in S\setminus \{0\}}q^i$ if $\{0\}\subsetneq S$. 
	For any $\mathcal{U}=(S_1,S_2)\in \mathcal{P}_2(n)$, we define the component $\mathcal{C}_{\mathcal{U}}$ of $\gamma_n$ corresponding to $\mathcal{U}$ by
	\[
	\mathcal{C}_{\mathcal{U}}:=\begin{cases}\frac{a^{w_1(S_1)}b^{w_1(S_2)}}{\prod_{i\in S_1}(-[i+1])\prod_{i\in S_2}(-[i+2])}& \text{ if } \mathcal{U}=(S_1,S_2)\in \mathcal{P}_2(n)\setminus \mathcal{P}_2^{1}(n)\\
	\frac{a^{w_2(S_1)}b^{w_1(S_2)}}{\prod_{i\in S_1}(-[i+1])\prod_{i\in S_2}(-[i+2])}& \text{ if } \mathcal{U}=(S_1,S_2)\in \mathcal{P}_2^{1}(n).
	\end{cases}
	\] 
	We set $F_0:=0$ and  $T_0:=1$. For $n\geq 1$, define 
	\[
	F_n:=\sum_{\mathcal{U}\in \mathcal{P}_2^{1}(n)}\mathcal{C}_{\mathcal{U}}
	\]
	and 
	\[
	T_n:=\sum_{\mathcal{U}\in \mathcal{P}_2(n)\setminus \mathcal{P}_2^{1}(n)}\mathcal{C}_{\mathcal{U}}.
	\]
	El-Guindy and Papanikolas \cite[Thm. 3.3]{EP13} showed that for any $n\geq 0$, the $n$-th coefficient $\gamma_{n}$ of the logarithm series $\log_{\phi}$ can be given by
	\[
	\gamma_n=aF_n+T_n.
	\]
	
	We now further assume that $\phi$ is the Drinfeld $A$-module as in \eqref{E:rank22} such that $a\in \mathbb{F}_q$ and $b\in \mathbb{F}_q^{\times}$. Let $\tilde{\phi}$ be another Drinfeld $A$-module given by $\tilde{\phi}_{\theta}=(-b^{-1})^{-1/(q-1)}\phi_{\theta}(-b^{-1})^{1/(q-1)}=\theta -ab^{-1}\tau+b^{-1}\tau^2$ for a fixed $(q-1)$-st root of $-b^{-1}$. The class number formula \cite[Thm. 1]{Taelman} of Taelman yields that 
	\[
	L(M_{\phi},1)=\log_{\tilde{\phi}}(1)
	\]
	where $\log_{\tilde{\phi}}$ is the logarithm function of $\tilde{\phi}$ induced by its logarithm series (see Remark \ref{R:valatone} for details).
	
	Our main result which concerns special values of $L(M_{\phi},n)$ at integers $n\geq 2$ in a certain domain can be stated as follows (later stated as Theorem \ref{T:specvalue}).
	\begin{theorem}\label{T:intr} Let $\phi$ be a Drinfeld $A$-module of rank 2 given by 
		\[
		\phi_{\theta}=\theta +a\tau+b\tau^2
		\]
		such that $a\in \mathbb{F}_q$ and $b\in \mathbb{F}_q^{\times}$. Then for any positive integer $n$ satisfying $ 2n+1 \leq q$, we have
		\begin{multline*}	
		L(M_\phi,n+1)
		=\bigg(\sum_{i=0}^{\infty}\frac{(-1)^{i}b^{-i}\gamma_i}{L_i^{n}}\bigg)\bigg(1+\sum_{i=1}^{\infty}\frac{(-1)^{i}b^{-(i-1)}F_{i-1}}{L_i^{n}}\bigg)\\
		-\bigg(\sum_{i=1}^{\infty}\frac{(-1)^{i}b^{-(i-1)}\gamma_{i-1}}{L_i^{n}}\bigg)\bigg(\sum_{i=0}^{\infty}\frac{(-1)^{i}b^{-i}F_i}{L_i^{n}}\bigg).
		\end{multline*}
	\end{theorem}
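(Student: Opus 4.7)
The plan is to derive an explicit Dirichlet-series form for $L(M_\phi, n+1)$ from the Euler product and then reorganize it, using the combinatorial description $\gamma_n = aF_n + T_n$ coming from shadowed partitions, into the $AB - CD$ shape in the statement.

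First I would exploit that $\phi$ is defined over $\mathbb{F}_q$ to write down the $\tau$-action on the Anderson $t$-motive $M_\phi$ explicitly. For $\phi_\theta = \theta + a\tau + b\tau^2$ with $a,b \in \mathbb{F}_q$, there is a standard basis of $M_\phi$ on which the matrix of $\tau$ has entries in $\mathbb{F}_q[t,\theta]$, with the $\theta$-dependence confined to a single entry of the form $(t-\theta)/b$. At each finite prime $w \in A_+$ of degree $d$, this yields $P_w(X)$ as a degree-$2$ polynomial whose coefficients are explicit in $a, b, w$, and the inverse power series $P_w(X)^{-1} = \sum_{k \geq 0} c_{w,k} X^k$ has coefficients satisfying a $2$-term linear recurrence dictated by $P_w$.

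Next I would expand $L(M_\phi, n+1) = \prod_w P_w(w^{-(n+1)})^{-1}$ into a Dirichlet-type sum $\sum_{f \in A_+} c_f/f^{n+1}$ by multiplicativity, and then use the rank-$2$ recurrence to split the sum into two branches. After re-indexing by powers of $b$ and by the Carlitz coefficients $L_i$, these two branches begin to resemble the four series $A, B, C, D$ in the statement, and the product-minus-product structure $AB - CD$ arises naturally from the $2 \times 2$ determinant underlying $P_w$. The hypothesis $2n+1 \leq q$ provides the $\infty$-adic size estimate needed to justify both the absolute convergence of the resulting double series and the rearrangements that produce the final form; for smaller $q$ one would face $\infty$-adic collisions between the two branches.

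The last step is a combinatorial matching: I would compare the coefficients of the four resulting series with the shadowed-partition sums defining $F_n$ and $T_n$, and invoke $\gamma_n = aF_n + T_n$ (with $F_0 = 0$, $T_0 = 1$) to rewrite everything in the stated form. The leading $1$ in $B$ and the lower index $i \geq 1$ in $B, C$ should come from the boundary values $F_0 = 0$, $T_0 = 1$ together with an index shift arising from the chosen basis of $M_\phi$. I expect the principal obstacle to be this combinatorial identification rather than any analytic issue: showing term-by-term that the Dirichlet coefficients $c_f$, organized by the $\tau$-power structure of $M_\phi$, decompose exactly into the shadowed-partition expressions for $F_n$ and $T_n$ will require careful bookkeeping, and the bound $2n+1 \leq q$ is what ensures this identification can be carried out cleanly.
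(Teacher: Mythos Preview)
Your approach diverges fundamentally from the paper's, and it has a genuine gap at its core.

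The paper does not manipulate the Euler product directly. Instead it constructs an auxiliary $(2n+1)$-dimensional $t$-module $G_n$ (the tensor of $\phi$ with $C^{\otimes n}$), shows its class module vanishes, and invokes the Taelman/Fang/Angl\`es--Ngo Dac--Tavares Ribeiro class number formula to identify $L(M_\phi,n+1)$ with the lattice index $[\Lie(G_n)(A):U(G_n/A)]_A$. The $AB-CD$ shape then arises because this index, restricted to a canonical $2$-dimensional $K_\infty$-subspace $W\subset\Lie(G_n)(K_\infty)$, is the determinant of the bottom-right $2\times 2$ block of the matrix $\sum_{i\ge 0}P_i$ of logarithm coefficients of $G_n$. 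That block is computed explicitly via shadowed partitions, and its four entries are precisely the four series in the statement. The hypothesis $2n+1\le q$ enters through convergence of $\Log_{G_n}$ on the unit ball and through the structural result identifying $W$.

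The gap in your plan is the step ``split the Dirichlet sum $\sum_{f\in A_+} c_f/f^{\,n+1}$ into two branches and reorganize into the four series.'' The Dirichlet coefficients $c_f$ are indexed by monic polynomials and governed by the local Euler factors; the four series on the right are indexed by a $\tau$-power $i$ and built from the global logarithm data $\gamma_i$, $F_i$, $L_i$. There is no purely combinatorial bridge between these two indexings: that the $L$-value, a priori an infinite product over primes, collapses to a finite-dimensional regulator of logarithm values is exactly the content of the class number formula, and that is a theorem, not a rearrangement. Your remark that the $2\times 2$ determinant ``underlying $P_w$'' produces the $AB-CD$ form conflates the local degree-$2$ factor $P_w(X)$ with the global $2\times 2$ regulator; these are unrelated determinants. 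Without invoking the class number formula (or an equivalent log-algebraic identity for $G_n$), the ``combinatorial matching'' you describe cannot be carried out, and the bound $2n+1\le q$ by itself will not supply it.
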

	The strategy of the proof of Theorem \ref{T:intr} and the outline of the paper can be explained as follows:
	\begin{itemize} 
		\item[(I)]  After introducing some preliminaries and notation used throughout the paper, we define, in \S2.2, the $t$-module $G_n$ given by the tensor product of a Drinfeld $A$-module $\phi$ of rank 2 and the $n$-th tensor power of the Carlitz module. We also discuss effective $t$-motives, Taelman $t$-motives and their $L$-series (see \S2.3, \S 2.4 and \S2.5 for details).
		\item[(II)] In \S3, we analyze the certain entries of the coefficients of the logarithm series $\Log_{G_n}$ of $G_n$. Using Papanikolas' method \cite[Sec. 4.3]{PLogAlg} as well as Lemma \ref{L:1} and Proposition \ref{P:1}, we relate them to shadowed partitions and Carlitz logarithm coefficients (Corollary \ref{C:1}). We also detect some elements living in the convergence domain of the function $\Log_{G_n}$ induced by the logarithm series of $G_n$ (Theorem \ref{T:T1}).
		\item[(III)] In \S4, we introduce the unit module $U(G_n/A)$ of $G_n$ (see Definition \ref{D:unit}) and recall some results on invertible lattices which are due to Debry \cite[Sec. 2]{Deb}. Combining them with Theorem \ref{T:T1}, we give the generators of the unit module $U(G_n/A)$ as an $A$-module in terms of the values of the logarithm function $\Log_{G_n}$ at some algebraic points  (Theorem \ref{T:unit}).
		\item[(IV)] In \S5, we apply the work of Angl\`{e}s,  Ngo Dac and Tavares Ribeiro \cite{ADTR} to our construction. We also study the Taelman $L$-values and show how they are related to the special values of Goss $L$-series (Proposition \ref{P:lseries}).  Finally, we formulate the special value $L(M_\phi,n+1)$ and prove Theorem \ref{T:intr} by using Theorem \ref{T:unit}. 
	\end{itemize}

	\begin{remark} 
		Assume that $\varrho$ is a Drinfeld $A$-module of $r\geq 2$ given by $\varrho_{\theta}=\theta+A_1\tau+\dots+A_r\tau^r.$ Although our arguments introduce a way to generalize Theorem \ref{T:intr} when $\varrho$ is defined with respect to the condition that $A_i\in \mathbb{F}_q$ for all $1\leq i \leq r-1$ and $A_r\in \mathbb{F}_q^{\times}$, our current method does not allow us to prove similar results when some of the coefficients of $\varrho$ is in $A\setminus \mathbb{F}_q$. This is due to the difficulty of understanding the generators of $U(G_n/A)$ in that case. One can also generalize Theorem \ref{T:intr} for larger values of $n$, if a version of Proposition \ref{P:lattice} for such values of $n$ is understood (see \S5.3 for details).  We hope to tackle these problems in the near future.
	\end{remark}
		\subsection*{Acknowledgments} The author is thankful to Chieh-Yu Chang, Yen-Tsung Chen, Tuan Ngo Dac, Nathan Green, Yoshinori Mishiba and Changningphaabi Namoijam for useful suggestions and fruitful discussions. The author also thanks the referee for reading the manuscript carefully and all the suggestions to make the content of the present paper clearer.

	\section{Preliminaries} 
	\subsection{Hyperderivatives}
	For any non-negative integers $i$ and $j$, the binomial coefficient $\binom{i}{j}$ is given by
	\[
	\binom{i}{j}:=\begin{cases} \frac{i!}{(i-j)!j!} & \text{ if } i\geq j\\
	0 & \text{ if } i<j
	\end{cases}.
	\]
	Furthermore, when $k=-i$ is a negative integer, we define
	\[
	\binom{k}{j}=(-1)^j\binom{i+j-1}{j}.
	\]
	We now define the  $j$-th hyperdifferential operator $\partial^j_{\theta}:K_{\infty}\to K_{\infty}$ with respect to $\theta$ by 
	\[
	\partial_{\theta}^{j}\Big(\sum_{k\leq k_0}c_k\theta^k\Big):=\sum_{k\leq k_0}c_k\binom{k}{j}\theta^{k-j}\text{ , }c_k\in \mathbb{F}_q.
	\]
	Note that if $j=0$, then $\partial_{\theta}^{0}(g)=g$ for all $g\in K_{\infty}$. Let  $\CC_{\infty}((t))$ be the field of formal Laurent series in $t$ with coefficients in $\CC_{\infty}$. For any $j\in \mathbb{Z}_{\geq 0}$, we define the $j$-th hyperdifferential operator $\partial^{j}_{t}:\CC_{\infty}((t))\to \CC_{\infty}((t))$ with respect to $t$ by
	\[
	\partial^{j}_{t}\bigg(\sum_{i=i_0}^{\infty}g_it^i\bigg):=\sum_{i=i_0}^{\infty}g_i\binom{i}{j}t^{i-j}\text{, }g_i\in \CC_{\infty}.
	\]
	Furthermore, when $j=0$, we have $\partial_{t}^{0}(f)=f$ for any $f\in \CC_{\infty}((t))$ and if $f_1,f_2\in \CC_{\infty}((t))$ and $n\geq 0$, we have the following product rule:
	\begin{equation}\label{E:rule}
	\partial_{t}^{n}(f_1f_2)=\sum_{\substack{j_1,j_2\geq 0\\j_1+j_2=n}}\partial_{t}^{j_1}(f_1)\partial_{t}^{j_2}(f_2).
	\end{equation}
	For more details on hyperderivatives, we refer the reader to \cite{B99}, \cite{Con} and \cite{F47}. 
	
	The next proposition is useful to deduce our results relating to the hyperderivatives.
	\begin{proposition} \cite[Cor. 2.7]{US98}, \cite[Prop. 3.3.2]{CGM} \label{P:CGM}  Consider the power series
		$
		f=\sum_{i=0}^{\infty}a_i(t-\theta)^{i} \in \CC_{\infty}[[t]]
		$
		so that, as a function of $t$, it is convergent in $D_q:=\{z\in \CC_{\infty} | \ \  \inorm{z}\leq q\}$. Then for any $j\geq 0$, we have
		\[
		a_j=\partial_{t}^{j}(f)_{|t=\theta}.
		\]
	\end{proposition}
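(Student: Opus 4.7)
The plan is to reduce the claim to the action of $\partial_t^j$ on single monomials $(t-\theta)^i$ and then pass $\partial_t^j$ through the infinite sum defining $f$ using the convergence hypothesis.

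First I would prove the monomial identity
\[
\partial_t^j\bigl((t-\theta)^i\bigr)=\binom{i}{j}(t-\theta)^{i-j}, \qquad i,j\geq 0,
\]
with the right-hand side understood to be zero when $i<j$. This is the natural analogue of the classical Taylor rule $\tfrac{1}{j!}\tfrac{d^j}{dt^j}(t-\theta)^i=\binom{i}{j}(t-\theta)^{i-j}$, and can be verified by expanding $(t-\theta)^i=\sum_k\binom{i}{k}(-\theta)^{i-k}t^k$ via the binomial theorem, applying the definition of $\partial_t^j$ on powers of $t$, and invoking the combinatorial identity $\binom{i}{k}\binom{k}{j}=\binom{i}{j}\binom{i-j}{k-j}$ (which holds for the Lucas-style binomials in characteristic $p$). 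An induction on $i$ based on the Leibniz rule \eqref{E:rule}, together with $\partial_t^1(t-\theta)=1$ and $\partial_t^j(t-\theta)=0$ for $j\geq 2$, works equally well.

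Next I would justify term-by-term hyperdifferentiation. Convergence of $f$ on $D_q$ implies $\inorm{a_i}q^i\to 0$, and in characteristic $p$ the binomial coefficients satisfy $\inorm{\binom{i}{j}}\leq 1$. Hence for every $t\in D_q$ the series $\sum_{i\geq j}\binom{i}{j}a_i(t-\theta)^{i-j}$ converges, and as a formal power series in $t$ it coincides with $\partial_t^j(f)$ obtained by applying the definition of $\partial_t^j$ to any polynomial truncation of $f$ and passing to the limit. Specializing at $t=\theta\in D_q$ then annihilates every summand with $i>j$, and the $i=j$ contribution is $\binom{j}{j}a_j=a_j$, which is exactly the claimed identity.

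The main obstacle is the commutation of $\partial_t^j$ with the infinite sum, i.e.\ ensuring that the formal and analytic interpretations of $\partial_t^j(f)$ coincide at $t=\theta$. The convergence hypothesis on the full closed disk $D_q$, together with $\inorm{\theta}=q$ so that $\theta\in D_q$, is precisely what makes this step legitimate in the non-archimedean setting: without convergence on a disk of radius at least $q$, the substitution $t=\theta$ could not be carried out and the identity would fail to make sense. Once this analytic step is secured, the remaining content is the monomial formula and the vanishing of $(t-\theta)^{i-j}$ at $t=\theta$ for $i>j$.
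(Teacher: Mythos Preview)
Your argument is correct and is the standard route to this Taylor-coefficient identity. Note, however, that the paper does not give its own proof of this proposition: it is stated with citations to \cite[Cor.~2.7]{US98} and \cite[Prop.~3.3.2]{CGM} and used as a black box. So there is no ``paper's proof'' to compare against; your write-up is essentially a reconstruction of the argument underlying those references, and the key steps (the monomial formula $\partial_t^j((t-\theta)^i)=\binom{i}{j}(t-\theta)^{i-j}$ via the Leibniz rule or the identity $\binom{i}{k}\binom{k}{j}=\binom{i}{j}\binom{i-j}{k-j}$, together with the non-archimedean estimate $\inorm{a_i}q^i\to 0$ to pass $\partial_t^j$ through the sum) are exactly what one finds there.
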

	
	\subsection{The $t$-module $G_n$}
	We start with the definition of $t$-modules and then analyze  the tensor product of certain $t$-modules which takes our interest throughout the paper. For further details on $t$-modules and their tensor products, we refer the reader to \cite{BP02}, \cite{H93}, \cite{HartlJuschka16} and  \cite{Juschka10}.
	
	Let $k,m\in \ZZ_{\geq 1}$. For any matrix $B=(B_{i,j})\in \Mat_{k\times m}(L)$ and integer $d$, set $B^{(d)}:=(B_{i,j}^{q^d})$. Furthermore, we extend the norm $\inorm{\cdot}$ to $\Mat_{k\times m}(L)$ by setting $\inorm{B}:=\sup_{i,j}\inorm{B_{i,j}}$. Consider the set $\Mat_{k\times m}(L)[[\tau]]:=\{\sum_{i\geq 0}B_i\tau^i \ \ | B_i\in \Mat_{m\times k}(L)\}$ of twisted power series. When $k=m$, we define the non-commutative ring  $\Mat_{k}(L)[[\tau]]:=\Mat_{k\times k}(L)[[\tau]]$ subject to the condition
	\[
	\tau B=B^{(1)}\tau
	\]
	 for all $B\in \Mat_{k}(L)[[\tau]]$ and set $\Mat_{k}(L)[\tau]\subset\Mat_{k}(L)[[\tau]] $ to be the subring of polynomials in $\tau$.
	\begin{definition} 
		\begin{itemize} 
			\item[(i)] A $t$-module of dimension $d$ is a tuple $G:=(\mathbb{G}^{d}_{a/L},\psi)$ where $\mathbb{G}^{d}_{a/L}$ is the $d$-dimensional additive algebraic group over $L$ and $\psi$ is an $\mathbb{F}_q$-linear ring homomorphism $\psi:A\to \Mat_{d}(L)[\tau]$ defined by 
			\begin{equation}\label{E:red}
			\psi(\theta):=A_0+A_1\tau+\dots+A_m\tau^m
			\end{equation}
			such that $A_0=\theta \Id_{d}+N$ where $\Id_{d}$ is the $d\times d$ identity matrix and $N$ is a nilpotent matrix. For each $0\leq i \leq m$, if $A_i$ is in $\Mat_{d}(R)$ where $R$ is a subring of $L$, then we say $G$ is defined over $R$.
			\item[(ii)] Morphisms between $t$-modules $G=(\mathbb{G}^{d_1}_{a/L},\psi_1)$ and $G^{\prime}=(\mathbb{G}^{d_2}_{a/L},\psi_2)$ are given by any element $\Psi\in \Mat_{d_2\times d_1}(\CC_{\infty})[\tau]$ satisfying
			\[
			\Psi \psi_1(\theta)=\psi_2(\theta)\Psi.
			\]
			We also denote the category of $t$-modules by $\mathcal{G}$.
		\end{itemize}
	\end{definition}
	\begin{example}\label{Ex:tmodules}
		\begin{itemize}
			\item[(i)] Any Drinfeld $A$-module $\phi$ defined as in \eqref{D:drinfeld} can be considered as a $t$-module of dimension one defined over $L$. 
			\item[(ii)] Let $n$ be a positive integer. Another example of $t$-modules can be given by the Carlitz $n$-th tensor power $C^{\otimes n}:=(\mathbb{G}^{n}_{a/K},\psi)$ where $\psi$ is the $\mathbb{F}_q$-linear ring homomorphism given by 
			
			\[
			\psi(\theta)=\begin{bmatrix}
			\theta&1& & \\
			& \ddots&\ddots & \\
			& & \theta & 1\\
			& & & \theta  
			\end{bmatrix}+\begin{bmatrix}
			0&\dots&\dots &0 \\
			\vdots& & &\vdots \\
			0& &  & \vdots\\
			1&0&\dots& 0 
			\end{bmatrix}\tau.
			\] 
			Note that when $n=1$, the definition of $C^{\otimes 1}$ coincides with the Carlitz module. We refer the reader to \cite{AndThak90} for further details. 
		\end{itemize}
	\end{example}
	
	Let $R$ be an $\mathbb{F}_q$-algebra containing each entry of  $A_0,A_1,\dots,A_m$. The $A$-module action  on $\Mat_{d\times 1}(R)$ induced by  $G=(\mathbb{G}^{d}_{a/L},\psi)$ is given as
	\[
	\theta\cdot x:=\psi(\theta)x:=A_0x+A_1x^{(1)}+\dots +A_mx^{(m)}, \ \ x\in\Mat_{d\times 1}(R)
	\]
	and denote such $A$-module by $G(R)$. Furthermore, we set $\partial_{\psi}(\theta):=A_0$ and  define the $A$-module action on $\Mat_{d\times 1}(R)$ via the map $\partial_{\psi}:A\to \Mat_{d}(R)$ so that 
	\begin{equation}\label{E:modact}
	\theta\cdot x:=\partial_{\psi}(\theta)x:=A_0x=(\theta\Id_{d}+N)x,\ \  x\in \Mat_{d\times 1}(R)
	\end{equation}
	and denote such $A$-module by $\Lie(G)(R)$.
	
	When $L=K$ and $R'$ is any subring of $\CC_{\infty}$ containing $K_{\infty}$, by using \cite[Lem. 1.7]{Fang}, the $A$-module action induced by $\partial_{\psi}$ as in \eqref{E:modact} can be uniquely extended to a $K_{\infty}$-vector space action on $\Mat_{d\times 1}(R')$  via the map $\partial_{\psi}:K_{\infty}\to \Mat_{d}(K_{\infty})$ defined by
	\begin{equation}\label{E:exten}
	\partial_{\psi}\Big(\sum_{i\geq i_0}c_i\theta^{-i}\Big)=\sum_{i\geq i_0}c_i(\theta\Id_{d}+N)^{-i} \ ,\ c_i\in \mathbb{F}_q
	\end{equation}
	so that $f\cdot x:=\partial_{\psi}(f)x$ for any $f\in K_{\infty}$ and $x\in \Mat_{d\times 1}(R')$. We denote such $K_{\infty}$-module by $\Lie(G)(R')$.
	
	One can assign an exponential series $\Exp_{G}\in \Mat_{d}(L)[[\tau]]$ to any $t$-module $G$ given by
	\[
	\Exp_{G}:=\sum_{i=0}^{\infty}Q_i\tau^i, \text{ } Q_0=\Id_{d} \text{, } Q_i\in \Mat_{d}(L)
	\]
	subject to the condition that  $\Exp_{G}\partial_{\psi}(\theta)=\psi(\theta)\Exp_{G}$. The exponential series $\Exp_{G}$ induces to an everywhere convergent and vector valued $\mathbb{F}_q$-linear homomorphism $\Exp_{G}:\Lie(G)(\CC_{\infty})\to G(\CC_{\infty})$ defined by 
	\[
	\Exp_{G}(u)=\sum_{i\geq 0}Q_i u^{(i)} \ \ , \ \ u\in \Mat_{d\times 1}(\CC_{\infty}).
	\]

	The logarithm series $\Log_{G}\in \Mat_{d}(L)[[\tau]]$ of $G$ which is the formal inverse of $\Exp_{G}$ is given by
	\[
	\Log_{G}:=\sum_{i=0}^{\infty}P_i\tau^i, \text{ } P_0=\Id_{d} \text{, } P_i\in \Mat_{d}(L)
	\]
	with respect to the condition
	\begin{equation}\label{E:funceq}
	\partial_{\psi}(\theta)\Log_{G}=\Log_{G}\psi(\theta).
	\end{equation}
	Similar to the exponential series, the logarithm series $\Log_{G}$ induces to an $\mathbb{F}_q$-linear homomorphism $\Log_{G}:\mathbb{D}\to \Lie(G)(\CC_{\infty})$ defined by 
	\[
	\Log_{G}(u)=\sum_{i\geq 0}P_i u^{(i)} \ \ , \ \ u\in \mathbb{D}
	\]
	where $\mathbb{D}$ is the domain of convergence of $\Log_{G}$ in $G(\CC_{\infty})$ (see \cite[Lem. 2.5.4]{HartlJuschka16} for more details on $\mathbb{D}$).
	
	We now fix a positive integer $n$ and the Drinfeld $A$-module $\phi$ of rank 2 defined by
	\begin{equation}\label{E:rank2}
	\phi_{\theta}=\theta+a\tau+b\tau^2
	\end{equation}
	for $a\in A$ and $b\in A\setminus\{0\}$.  We introduce the $t$-module $G_n=(\mathbb{G}_{a/K}^{2n+1},\phi_n)$, constructed from $\phi$ and $C^{\otimes n}$, where $\phi_n$ is the $\mathbb{F}_q$-linear ring homomorphism 
	$
	\phi_n:A\to \Mat_{2n+1}(K)[\tau]
	$ given by 
	\begin{equation}\label{E:tmodule}
	\phi_{n}(\theta)=\theta \Id_{2n+1}+N+E\tau
	\end{equation}
	such that $N\in \Mat_{2n+1}(\mathbb{F}_q)$ and $E\in \Mat_{2n+1}(A)$ are defined as
	\[
	N:=\begin{bmatrix}
	0&0&1&\dots&\dots&0\\
	&0&0&1&\dots&0\\
	& &\ddots&\ddots&\ddots& \vdots\\
	& &  & 0&0&1\\
	& &  & & 0&0\\
	& & & & & 0
	\end{bmatrix}\text{, }E:=\begin{bmatrix}
	0&\dots&\dots & \dots &0\\
	\vdots& & & &\vdots \\
	0& & & & \vdots\\
	1&0&\dots&\dots &\vdots\\
	a&b&0&\dots &0
	\end{bmatrix}.
	\]
	
	For any $f\in K_{\infty}$, we also set $d_n[f]\in \Mat_{2n+1}(K_{\infty})$ given by 
	\[
	d_n[f]:=\begin{bmatrix}
	f&0&\partial_{\theta}(f)&0&\partial^2_{\theta}(f)&\dots &0&\partial_{\theta}^{n}(f)\\
	&f&0&\partial_{\theta}(f)& & \\
	& & \ddots &\ddots  &\ddots  & \\
	& & & \ddots &\ddots  &\ddots&  & \\
	& & & &\ddots &\ddots  &\ddots  & \\
	
	& &  & & & f&0&\partial_{\theta}(f)\\
	& &  & & & & f&0\\
	& & & & & & &f
	\end{bmatrix}.
	\]
	\begin{remark} It is important to emphasize that our definition for the matrix $d_{n}[f]$ is slightly different than the $d$-matrices of Papanikolas defined in \cite[Eq. (2.5.1)]{PLogAlg}.
	\end{remark}
	\begin{lemma}\label{L:dpart} For any $a\in A$, we have $\partial_{\phi_{n}}(a)=d_n[a]$.
	\end{lemma}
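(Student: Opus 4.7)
The cleanest approach is to recognize both sides as $\mathbb{F}_q$-algebra homomorphisms $A = \mathbb{F}_q[\theta] \to \Mat_{2n+1}(K)$ and reduce to checking agreement on the single generator $\theta$. The map $\partial_{\phi_n}$ is a ring homomorphism by construction (it is the ``constant term'' of $\phi_n$ in $\tau$, and the requirement that $\phi_n$ be a ring homomorphism forces $\partial_{\phi_n}$ to be one too). So the plan is: (i) show that $f \mapsto d_n[f]$ is an $\mathbb{F}_q$-algebra homomorphism from $K_\infty$ (in particular from $A$) into $\Mat_{2n+1}(K_\infty)$; (ii) verify $d_n[\theta] = \theta\Id_{2n+1} + N = \partial_{\phi_n}(\theta)$; (iii) conclude by the universal property of the polynomial ring.

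For step (ii), observe that $\partial_\theta^{0}(\theta) = \theta$ populates the diagonal of $d_n[\theta]$, while $\partial_\theta(\theta) = 1$ gives $1$'s on the entries in positions $(i, i+2)$ for $1 \leq i \leq 2n-1$, which is exactly the pattern describing $N$; all higher hyperderivatives $\partial_\theta^k(\theta)$ with $k \geq 2$ vanish, so no further entries appear. This matches $\theta\Id_{2n+1} + N$ on the nose.

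The nontrivial ingredient is step (i), and it is really what the lemma is about. The entries of $d_n[f]$ are supported only on even off-diagonals: $d_n[f]_{i,j} = \partial_\theta^{(j-i)/2}(f)$ when $j-i \geq 0$ is even and $j \leq 2n+1$, and zero otherwise. Consequently, in the matrix product $d_n[f] \cdot d_n[g]$ the $(i,j)$-entry automatically vanishes whenever $j-i$ is odd, and when $j-i = 2k$ is even the only contributing intermediate indices are of the form $\ell = i + 2m$ with $0 \leq m \leq k$, giving
\[
(d_n[f]\,d_n[g])_{i,\, i+2k} \;=\; \sum_{m=0}^{k}\partial_\theta^{m}(f)\,\partial_\theta^{k-m}(g).
\]
Applying the Leibniz/product rule for the hyperdifferential operators $\partial_\theta^k$ (analogous to the rule \eqref{E:rule} but for $\partial_\theta$), this sum equals $\partial_\theta^{k}(fg)$, which is precisely $d_n[fg]_{i,\,i+2k}$. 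Hence $d_n[fg] = d_n[f]\,d_n[g]$; $\mathbb{F}_q$-linearity is immediate from the $\mathbb{F}_q$-linearity of each $\partial_\theta^k$, and $d_n[1] = \Id_{2n+1}$.

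The main (mild) obstacle is just bookkeeping in (i): one must make sure the sparsity pattern ``only even off-diagonals'' is preserved under multiplication and that the indexing of the hyperderivatives matches the $2m$-shift forced by the structure of $N$. Once this is observed, the three steps fit together and the lemma follows by evaluating both ring homomorphisms at $\theta$ and invoking the universal property of $\mathbb{F}_q[\theta]$.
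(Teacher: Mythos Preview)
Your proof is correct and takes a somewhat different, more conceptual route than the paper's. The paper reduces by $\mathbb{F}_q$-linearity to monomials $a=\theta^i$ and then argues by induction on $i$: the inductive step uses the Pascal identity $\binom{i}{j}+\binom{i}{j-1}=\binom{i+1}{j}$ to derive $\theta\,\partial_\theta^{j}(\theta^i)+\partial_\theta^{j-1}(\theta^i)=\partial_\theta^{j}(\theta^{i+1})$, which is exactly the entrywise check that $\partial_{\phi_n}(\theta)\,d_n[\theta^i]=d_n[\theta^{i+1}]$. You instead prove outright that $f\mapsto d_n[f]$ is an $\mathbb{F}_q$-algebra homomorphism $K_\infty\to\Mat_{2n+1}(K_\infty)$ using the full Leibniz rule for hyperderivatives, and then match the two ring homomorphisms on the generator $\theta$ via the universal property of $\mathbb{F}_q[\theta]$. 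The underlying identity is the same (the paper's binomial relation is the special case $f=\theta$, $g=\theta^i$ of the Leibniz rule), but your packaging yields the slightly stronger fact that $d_n[\cdot]$ is multiplicative on all of $K_\infty$, not just on polynomials, which can be convenient elsewhere; the paper's induction, on the other hand, avoids appealing to the Leibniz rule as a black box and stays entirely within the combinatorics of binomial coefficients.
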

	\begin{proof} By the $\mathbb{F}_q$-linearity of the action $\partial_{\phi_{n}}$ on $A$ and hyperdifferential operators with respect to $\theta$, it is enough to prove the lemma for $a=\theta^i$ for $i\in \mathbb{Z}_{\geq 0}$. We do induction on $i$. If $i=0$, then we are done. Assume that the assumption holds for all $i$. Note that for positive integers $i$ and $j$ such that $i\geq j$ we have
		\begin{equation}\label{E:eqbinom}
		\binom{i}{j}+\binom{i}{j-1}=\binom{i+1}{j}.
		\end{equation} 
		Using \eqref{E:eqbinom}, we obtain
		\begin{equation}\label{E:hyp}
		\theta\partial^j_{\theta}(\theta^i)+\partial_{\theta}^{j-1}(\theta^i)=\partial_{\theta}^{j}(\theta^{i+1}).
		\end{equation}
		Using the fact that $\phi_n$ is an $\mathbb{F}_q$-linear ring homomorphism, we obtain $\partial_{\phi_{n}}(\theta^{i+1})=\partial_{\phi_{n}}(\theta)\partial_{\phi_{n}}(\theta^i)$. Thus the equality in  \eqref{E:hyp} implies the assumption for $i+1$ as desired.
	\end{proof}

	\subsection{Effective $t$-motives over $K$}  We define $K[t]$ to be the commutative polynomial ring consisting of polynomials in $t$ with coefficients in $K$ and $K(t)$ to be its quotient field. 
	For any $f=\sum_{i\geq 0} c_it^i\in K[t]$ and $j\in \mathbb{Z}$, we set $f^{(j)}:=\sum_{i\geq 0} c_i^{q^j}t^i$.
	We define the non-commutative ring $K[t,\tau]:=K[t][\tau]$ subject to the condition
	\[
	\tau f=f^{(1)}\tau,  \ \  f\in K[t].
	\]
	
	\begin{definition} 
		\begin{itemize}
			\item[(i)] An effective $t$-motive $M$ defined over $K$ is a left $K[t,\tau]$-module which is free and finitely generated over $K[t]$ such that the determinant of the matrix representing the $\tau$-action on $M$ with respect to any chosen $K[t]$-basis is equal to $c(t-\theta)^{s}$ for some $c\in K^{\times}$ and $s\in \mathbb{Z}_{\geq 0}$.
			\item[(ii)] The morphisms between effective $t$-motives are left $K[t,\tau]$-module homomorphisms and we let $\mathbf{M}$ be the category of effective $t$-motives defined over $K$. For any $M_1,M_2\in \mathbf{M}$, we denote the set of morphisms between $M_1$ and $M_2$ by $\Hom_{\mathbf{M}}(M_1,M_2)$.
		\end{itemize}
	\end{definition}

	Now for a given effective $t$-motive $M$ which is also free and finitely generated over $K[\tau]$, let $\{v_1,\dots,v_d\}$ be a fixed $K[\tau]$-basis of $M$. Then there exists a matrix $\Phi_{\theta}\in \Mat_{d}(K)[\tau]$ such that 
	\[
	t \begin{pmatrix}
	v_1\\
	\vdots\\
	v_d
	\end{pmatrix}=\Phi_{\theta}\begin{pmatrix}
	v_1\\
	\vdots\\
	v_d
	\end{pmatrix}.
	\]
	Thus we can define an $\mathbb{F}_q$-linear ring homomorphism $\Phi:A\to \Mat_{d}(K)[\tau]$ by $\Phi(\theta):=\Phi_{\theta}$ so that $(\mathbb{G}^d_{a/K},\Phi)$ forms a $t$-module of dimension $d$. We call the $t$-module $(\mathbb{G}_{a/K}^{d},\Phi)$ formed via this process an abelian $t$-module corresponding to $M$.

	By \cite[Thm. 1]{Taelman3} (see also \cite[Thm. 10.8]{Sta07}), we know that there  is an anti-equivalence of categories between the subcategory  of effective $t$-motives over $K$ which are also finitely generated over $K[\tau]$ and the category of abelian $t$-modules defined over $K$. We now see some examples of such correspondence between effective $t$-motives and abelian $t$-modules. 
	
	\begin{example}\label{Ex1}
		\begin{itemize}
			\item[(i)] Let $\phi$ be the Drinfeld $A$-module of rank 2 given as in \eqref{E:rank2}. We set a left $K[t,\tau]$-module $M_\phi:=K[t]m_1\oplus K[t]m_2$ with some chosen $K[t]$-basis $\{m_1,m_2\}$ of $M_\phi$  whose $\tau$-action is given by 
			\[
			\tau \cdot (f_1
			m_1+f_2m_2) =f_2^{(1)}(t-\theta)b^{-1}m_1+(f_1^{(1)}-f_2^{(1)}ab^{-1})m_2
			\]
			for any $f_1,f_2\in K[t]$. It is the effective $t$-motive corresponding to  $\phi$. One can also easily see that $\{m_1\}$ is a $K[\tau]$-basis for $M_\phi$. 
			\item[(ii)]Let $M$ be an effective $t$-motive which is free of rank $r$ over $K[t]$. The $r$-th exterior power of $M$ is called the determinant of $M$ and is denoted by $\det(M)$. One can easily prove that $\det(M)$ is an effective $t$-motive of rank 1 over $K[t]$. For instance, let $\phi$ be the Drinfeld $A$-module of rank 2 defined as in \eqref{E:rank2}. Then $\det(M_\phi)=K[t]m_1\wedge m_2$ so that $\tau$ acts on $m_1\wedge m_2$ by
			\[
			\tau\cdot f(m_1\wedge m_2)=-f^{(1)}b^{-1}(t-\theta)(m_1\wedge m_2) \ , \ f\in K[t].
			\]
			Observe that $\det(M_\phi)$ is also a free $K[\tau]$-module with the $K[\tau]$-basis $\{m_1\wedge m_2\}$.
			\item[(iii)] Let $n$ be a non-negative integer. We now define the left   $K[t,\tau]$-module  $
			\textbf{C}^{\otimes n}:=K[t]m
			$
			whose $\tau$-action is given by $\tau \cdot (fm)=f^{(1)}(t-\theta)^{n}m$ for any $f\in K[t]$. It is free of rank one with the $K[t]$-basis $\{m\}$ and free of rank $n$ over $K[\tau]$ with the basis $\{m,(t-\theta)m,\dots,(t-\theta)^{n-1}m\}$. One can see that the abelian $t$-module corresponding to $\textbf{C}^{\otimes n}$ is given by $C^{\otimes n}$ defined in Example \ref{Ex:tmodules}(ii). When $n=1$, we also set $\textbf{C}:=\textbf{C}^{\otimes 1}$.
		\end{itemize}
	\end{example}

	Consider the left $K[t,\tau]$-module $M_n:=M_\phi\otimes_{K[t]} \textbf{C}^{\otimes n}$ on which $\tau$ acts diagonally. Using Example \ref{Ex1}, we see that $M_n$ is an effective $t$-motive which is free of rank $2$ over $K[t]$ with the basis $\{v_{1,0},v_{2,0}\}$ introduced as $v_{1,0}:=m_1 \otimes m$ and $v_{2,0}:=m_2 \otimes m$. For any $1\leq j\leq n$, we further let $v_{1,j}:=m_1 \otimes (t-\theta)^{j}m$ and similarly, set $v_{2,j}:=m_2 \otimes(t-\theta)^{j}m$ for $1\leq j \leq n-1$.  Thus using the $K[t]$-basis $\{v_{1,0},v_{2,0}\}$ of $M_n$, we see that the set $\{v_{1,0},\dots,v_{1,n},v_{2,0},\dots,v_{2,n-1}\}$ is a $K[\tau]$-basis for $M_n$ and hence $M_n$ is free of finite rank over $K[\tau]$. Moreover, the following identities hold:
	
	\begin{align*}
	&(t-\theta) v_{1,j}=v_{1,j+1} \text{ for } 0\leq j \leq n-1, \\
	&(t-\theta) v_{2,j}=v_{2,j+1} \text{ for } 0\leq j \leq n-2,\\
	&(t-\theta) v_{2,n-1}=\tau v_{1,0}, \\
	&(t-\theta) v_{1,n}=a\tau v_{1,0}+b\tau v_{2,0}. 
	\end{align*}
	Thus we see that the multiplication by $t$ on $M_n$ is given by 
	\[
	t \begin{pmatrix}
	v_{1,0}\\
	v_{2,0}\\
	\vdots\\
	v_{1,n-1}\\
	v_{2,n-1}\\
	v_{1,n}
	\end{pmatrix}=\phi_{n}(\theta)\begin{pmatrix}
	v_{1,0}\\
	v_{2,0}\\
	\vdots\\
	v_{1,n-1}\\
	v_{2,n-1}\\
	v_{1,n}
	\end{pmatrix}
	\]
	which shows that $G_n=(\mathbb{G}_{a/K}^{2n+1},\phi_n)$ is the $t$-module corresponding to $M_n$. Hence $G_n$ is an abelian $t$-module.

	Let $w$ be a monic irreducible polynomial in $\bA$ and $\bA_{w}$ be the completion of $\bA$ at $w$. Let $M$ be an effective $t$-motive over $K$ and set
	\[
	M_{K^{\text{sep}}}:=M \otimes_{K} K^{\text{sep}}
	\]
	which is a left $K^{\text{sep}}[t,\tau]$-module and the $\tau$-action on $M_{K^{\text{sep}}}$ is given by $\tau(f\otimes g)=\tau(f)\otimes g^q$ for all $f\in M$ and $g\in K^{\text{sep}}$. For any $\mathbb{F}_q[t,\tau]$-module $I$, let $I^{\tau=1}$ be the set of elements of $I$ fixed by the action of $\tau$.  We define the $\bA_w$-module
	\[
	T_{w}(M):=\lim_{\substack{\leftarrow\\i}}(M_{K^{\text{sep}}}/w^iM_{K^{\text{sep}}})^{\tau=1}.
	\]
	Furthermore we set 
	\[
	V_{w}(M):=T_{w}(M)\otimes_{\bA_w}\textbf{K}_{w}
	\]
	which is a finite dimensional $\textbf{K}_w$-vector space with a continuous action of $\Gal(K^{\text{sep}}/K)$ (see \cite[Prop. 1]{Taelman3}). Let $\rho=(\rho_w)$ be the family of homomorphisms $\rho_w:\Gal(K^{\text{sep}}/K)\to \GL(V_{w}(M))$  induced by the action of $\Gal(K^{\text{sep}}/K)$ on  $V_{w}(M)$. The next theorem is due to Gardeyn (see also \cite[Prop. 2]{Taelman3}).
	\begin{theorem}\cite[Thm. 3.3]{G01} \label{T:Gal} The following statements hold.
		\begin{itemize}
			\item[(i)] We have $\dim_{\textbf{K}_w}V_w(M)=\rank_{K[t]}M$.
			\item[(ii)] The family $\rho=(\rho_w)$  forms a strictly compatible system.
		\end{itemize}
	\end{theorem}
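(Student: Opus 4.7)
The statement is cited from Gardeyn's paper, so the author simply invokes it; nonetheless, here is the plan one would follow to reprove it.

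For part (i), I would study $\tau$-fixed points in finite quotients by an Artin--Schreier / Lang-style argument. Reducing a $K[t]$-basis of $M$ modulo $w^i$ and then tensoring with $K^{\sep}$ shows that $M_{K^{\sep}}/w^i M_{K^{\sep}}$ is free of rank $r := \rank_{K[t]} M$ over the ring $K^{\sep}[t]/(w^i(t))$. Since $w(\theta) \neq 0$ in $K$, the element $(t-\theta)$ is a unit in $K[t]/(w^i(t))$, so the defining condition $(t-\theta)^s M \subseteq \tau M$ forces $\tau$ to induce a $q$-semilinear automorphism of $M_{K^{\sep}}/w^i M_{K^{\sep}}$. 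The standard Lang-type argument---surjectivity of $\tau - \mathrm{id}$ on a vector group over a separably closed field in positive characteristic---then identifies the $\tau$-fixed submodule as a free $(A/w^i)$-module of rank $r$. Passing to the inverse limit over $i$ yields $T_w(M)$ free of rank $r$ over $A_w$, and tensoring with $K_w$ gives (i).

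For part (ii), I would take $S$ to be the finite set of primes of bad reduction for $M$. At any $v \notin S$ there is an integral model whose reduction is an Anderson $t$-motive $\overline{M}_v$ over the residue field $k_v = A/v$. For $w$ coprime to $v$, an extension-of-scalars comparison identifies $V_w(M)$ with $V_w(\overline{M}_v)$ as $\Gal(k_v^{\sep}/k_v)$-modules; this simultaneously yields unramifiedness of $\rho_w$ at $v$ and identifies $\rho_w(\Frob_v)$ with the geometric Frobenius acting on $V_w(\overline{M}_v)$. The characteristic polynomial of this Frobenius can be computed intrinsically from the $t$-motive structure as $\det(1 - X\tau^{\deg v} \mid \overline{M}_v)$, producing a polynomial in $K[X]$ manifestly independent of $w$.

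The principal obstacle is the good-reduction comparison underlying (ii): one must develop integral models of Anderson $t$-motives carefully enough to match the étale realization $V_w$ with the ``crystalline-type'' realization $\overline{M}_v$ and to compare the Frobenius actions across them. This is the technical core of Gardeyn's rigid-analytic argument in \cite{G01}, and I would follow that framework rather than attempt an independent development. Once the comparison is set up, the output of (i) and the independence statement of (ii) fall out essentially formally.
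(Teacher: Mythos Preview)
You correctly identify that the paper does not prove this theorem at all; it is simply quoted from Gardeyn \cite[Thm.~3.3]{G01} and used as a black box. Your sketch of the Lang-type argument for (i) and the good-reduction comparison for (ii) is a faithful outline of the standard approach underlying Gardeyn's result, so there is nothing further to compare.
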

	Throughout the present paper, we call $\rho=(\rho_w)$ in Theorem \ref{T:Gal} the family of representations attached to the effective $t$-motive $M$.
	
	\subsection{Taelman $t$-motives}
	We review the properties of a certain category $\mathcal{T}$, which is a rigid $\bA$-linear pre-abelian tensor category \cite[Thm. 2.3.7]{Taelman2} (see also \cite[Sec. 2.2.5]{Tae07}), consisting of Taelman $t$-motives introduced in \cite{Taelman2}.
	
	Let $M_1$ and $M_2$ be effective $t$-motives defined over $K$. The tensor product $M_1\otimes M_2:=M_1\otimes_{K[t]} M_2$ is also an effective $t$-motive on which $\tau$ acts diagonally.
	
	We define  
	$\Hom(M_1,M_2):=\Hom_{K[t]}(M_1 ,M_2)$. Taelman \cite[Prop. 2.2.3]{Taelman2} showed that for sufficiently large $n$, $\Hom(M_1,M_2\otimes \textbf{C}^{\otimes n})$ induces the structure of an effective $t$-motive whose $K[\tau]$-module structure can be described in what follows. 
	
	For $i=1,2$, let $B_{i}\in \Mat_{s_i}(K[t])$ be defined so that
	$\tau\cdot m_i^{\tr}=B_im_i^{\tr}$ where $m_i:=[m_{i,1},\dots,m_{i,s_i}]$
	consists of a $K[t]$-basis elements $m_{i,1},\dots,m_{i,s_i}$ of $M_i$. Let $\overline{K}$ be the algebraic closure of $K$ in $\CC_{\infty}$. We consider the  dual $K[t]$-basis $\{f_{i,j}\in \Hom(M_1,M_2\otimes \textbf{C}^{\otimes n})\ \ | i\in\{1,\dots,s_1\}, j\in \{1,\dots,s_2\}\}$ of $\Hom(M_1 ,M_2\otimes \textbf{C}^{\otimes n})$  given by 
	\[
	f_{i,j}(m_{1,k}):=\begin{cases}m_{2,j}\otimes 1 &\text{ if } k=i\\
	0 & \text{ otherwise}
	\end{cases}.
	\] 
	Note that after the extension of scalars, we have
	\[
	\Hom(M_1,M_2\otimes \textbf{C}^{\otimes n})\subset \Hom_{\overline{K}(t)}(M_1\otimes \overline{K}(t) ,M_2\otimes \textbf{C}^{\otimes n}\otimes\overline{K}(t)).
	\]
	Let $S_{i,j}\in \Mat_{s_2\times s_1}(K[t])$ be the representation matrix of $f_{i,j}$ with respect to $m_{1}$ and $m_2$. Then we define $\tau\cdot f_{i,j}:M_1\to M_2\otimes \textbf{C}^{\otimes n}$ to be the $\overline{K}(t)$-module homomorphism  whose representation matrix with respect to $m_1$ and $m_2^{\prime}:=\{m_{2,1}\otimes 1,\dots,m_{2,s_2}\otimes 1\}$  is given by $(t-\theta)^nB_{2}S_{i,j}(B_{1}^{-1})^{\tr}$. Since the determinant of $B_{1}$ is some power of $(t-\theta)$ times a unit in $K$, for sufficiently large $n$, the matrix $(t-\theta)^nB_{2}S_{i,j}(B_{1}^{-1})^{\tr}$ will have coefficients in $K[t]$. Hence $\tau\cdot f_{i,j}$ is indeed a $K[t]$-module homomorphism  in $\Hom(M_1,M_2\otimes \textbf{C}^{\otimes n})$.
	
	We are now ready to give the definition of Taelman $t$-motives. 
	
	\begin{definition}\label{D:D1}
		\begin{itemize}
			\item[(i)] A Taelman $t$-motive $\mathbb{M}$ is a tuple $(M,n)$ where $M$ is an effective $t$-motive defined over $K$ and $n\in \mathbb{Z}$.
			\item[(ii)] We define the set of morphisms between Taelman $t$-motives $(M_1, n_1)$ and $(M_2,n_2)$ by
			\[
			\Hom_{\mathcal{T}}((M_1, n_1),(M_2 ,n_2)):=\Hom_{\textbf{M}}(M_1\otimes  \textbf{C}^{\otimes (n+n_1)},M_2\otimes  \textbf{C}^{\otimes (n+n_2)})
			\]
			where $n\geq \max\{-n_1,-n_2\}$.
			\item[(iii)] For any $c\in K^{\times}$, we define $c\textbf{1}:=(c\textbf{1},0)$ to be the Taelman $t$-motive where $c\textbf{1}=K[t]$ on which $\tau$ acts as $\tau\cdot f=cf^{(1)}$ for any $f\in \textbf{1}$. When $c=1$, we call $\textbf{1}$ the trivial Taelman $t$-motive.
		\end{itemize}
	\end{definition}
	\begin{remark}\label{R:rem1}
		\begin{itemize}
			\item [(i)] It is important to point out that for effective $t$-motives $M_1$ and $M_2$, the canonical isomorphism
			\[
			\Hom_{\textbf{M}}(M_1,M_2)\cong\Hom_{\textbf{M}}(M_1\otimes \textbf{C},M_2\otimes \textbf{C})
			\]
			actually shows that the definition of morphisms between the objects of $\mathcal{T}$ is independent of $n$. 
			\item[(ii)] The category $\textbf{M}$ of effective $t$-motives can be embedded into $\mathcal{T}$ as a subcategory via the fully faithful functor $M \to (M,0)$ and by the abuse of notation, we continue to denote the image of $M$ under this functor by the same notation.
		\end{itemize}
	\end{remark}
	For any Taelman $t$-motive $\mathbb{M}_1:=(M_1,i_1)$ and $\mathbb{M}_2:=(M_2,i_2)$, we define
	\begin{equation}\label{E:tensdef}
	\mathbb{M}_1\otimes \mathbb{M}_2:=(M_1\otimes M_2, i_1+i_2).
	\end{equation}
	Note that $
	\mathbb{M}_1\otimes \mathbb{M}_2=\mathbb{M}_2\otimes \mathbb{M}_1 
	$ and moreover, for $\mathbb{M}\in \mathcal{T}$, we obtain $\mathbb{M}\otimes\textbf{1}=\textbf{1}\otimes \mathbb{M}=\mathbb{M}$.
	
	We define the internal hom in $\mathcal{T}$ by 
	\[
	\Hom(\mathbb{M}_1,\mathbb{M}_2)=\Hom((M_1,i_1),(M_2,i_2)):=(\Hom(M_1, M_2\otimes \textbf{C}^{\otimes i_2-i_1+i}),-i)
	\]
	where $i\in \mathbb{Z}_{\geq 0}$ is sufficiently large. For an effective $t$-motive $M$, we have the natural isomorphism between 
	$M\otimes\textbf{C}^{j}\otimes\textbf{C}$ and $M\otimes\textbf{C}^{j+1}$ for any $j\geq 0$ which implies that 
	\begin{equation}\label{E:isomorphy}
	(M\otimes \textbf{C},i)\cong (M,i+1), \ \ i\in \mathbb{Z}
	\end{equation}
	by Definition \ref{D:D1}(ii). Moreover for sufficiently large $i$ and $M_1,M_2\in \textbf{M}$, we have 
	\[
	\Hom(M_1,M_2\otimes \textbf{C}^{i})\otimes \textbf{C}\cong \Hom(M_1,M_2\otimes \textbf{C}^{i+1}).
	\]
	Thus one can show that the definition of the internal hom above is actually independent of $i$ and well-defined up to isomorphism of Taelman $t$-motives.

	For Taelman $t$-motives $\mathbb{M}_1,\dots,\mathbb{M}_4$, we have 
	\begin{equation}\label{E:can2}
	\Hom(\mathbb{M}_1,\mathbb{M}_3)\otimes \Hom(\mathbb{M}_2,\mathbb{M}_4)\cong \Hom(\mathbb{M}_1\otimes \mathbb{M}_2,\mathbb{M}_3\otimes \mathbb{M}_4).
	\end{equation}

	Furthermore, we define the dual $\mathbb{M}^{\vee}$ of the Taelman $t$-motive $\mathbb{M}$  by 
	\[
	\mathbb{M}^{\vee}:=\Hom(\mathbb{M},\textbf{1}).
	\]
	
	Taking dual of Taelman $t$-motives is also reflexive in the sense that $(\mathbb{M}^{\vee})^{\vee}=\mathbb{M}$ for any $\mathbb{M}\in \mathcal{T}$. 
	
	Remark \ref{R:rem1}(ii) explains how to identify an effective $t$-motive inside the category $\mathcal{T}$. Now we briefly discuss such identification for $\Hom(M_1,M_2)$ when $M_1,M_2\in \textbf{M}$ up to isomorphism of Taelman $t$-motives: We already know that for sufficiently large $n$, $M':=\Hom(M_1,M_2\otimes \textbf{C}^{\otimes n})$ is an effective $t$-motive. Thus, by the definition of internal hom, we see that $\Hom(M_1,M_2)$ can be identified by the tuple $(M',-n)$ inside $\mathcal{T}$. Some examples are in order.
	\begin{example} \label{Ex:1}
		\begin{itemize}
			\item[(i)] 
			For any positive integer $n$, consider the effective $t$-motive $\textbf{C}^{\otimes n}$. One can easily show that
			\[
			\Hom(\textbf{C}^{\otimes n},\textbf{C}^{\otimes n})\cong\textbf{1}.
			\]
			In other words, $(\textbf{C}^{\otimes n})^{\vee}=\Hom(\textbf{C}^{\otimes n},\textbf{1})$ can be  identified by $(\textbf{1},-n)$ in $\mathcal{T}$. We also note that by \eqref{E:isomorphy}, $\textbf{C}^{\otimes n}$ can be also identified by $(\textbf{1},n)\cong (\textbf{C}^{\otimes (n-i)},i)$ for any  $i\in \{0,\dots, n-1\}$. Furthermore, using \eqref{E:tensdef}, one can see that 
			\begin{equation}\label{E:dualC}
			(\textbf{C}^{\otimes n})^{\vee}\otimes \textbf{C}^{\otimes m}=(\textbf{1},m-n) \ , \ m\in \mathbb{Z}_{\geq 0}.
			\end{equation}
			\item[(ii)] Let $\tilde{\phi}$ be the Drinfeld $A$-module given by $\tilde{\phi}_{\theta}=\theta -ab^{-1}\tau+b^{-1}\tau^2$ such that $a\in \mathbb{F}_q$ and $b\in \mathbb{F}_q^{\times}$. Using the $K[\tau]$-module structure on $\Hom(M_{\tilde{\phi}},\textbf{C})$, one can see that 
			$
			\Hom(M_{\tilde{\phi}},\textbf{C})\cong M_{\phi}
			$
			where $\phi$ is the Drinfeld $A$-module as in \eqref{E:rank2} with $a\in \mathbb{F}_q$ and $b\in \mathbb{F}_q^{\times}$.  Thus, $M_{\tilde{\phi}}^{\vee}$ is identified by $(M_{\phi},-1)=M_{\phi}\otimes \textbf{C}^{\vee}$ where the equality follows from the previous example and \eqref{E:tensdef}. Moreover, since $M_{\tilde{\phi}}\otimes (-b^{-1}\textbf{1})\cong M_{\phi}$, using \eqref{E:tensdef}, one further sees that  
			\begin{equation}\label{E:prten11}
			M_{\tilde{\phi}}^{\vee}=(M_{\phi},-1)=(M_{\tilde{\phi}},0)\otimes(-b^{-1}\textbf{1},-1)=M_{\tilde{\phi}}\otimes \det(M_{\tilde{\phi}})^{\vee}.
			\end{equation}
		\end{itemize}
	\end{example}
	
	\subsection{$L$-series of Taelman $t$-motives and Taelman $L$-values} Using Theorem \ref{T:Gal} and the tensor compatibility of the functor $V_w$  defined in \S2.3 for any monic irreducible element $w\in \bA$, 
	in \cite[Sec. 2.8]{Taelman3}, Taelman was able to introduce the $L$-function $L(\mathbb{M},\cdot)$ corresponding to a Taelman $t$-motive $\mathbb{M}$ satisfying the property 
	\begin{equation}\label{E:Lser}
	L(\mathbb{M}\otimes \textbf{C},s+1)=L(\mathbb{M},s), \ \ s\in \mathbb{Z}
	\end{equation}
	provided that both sides of the identity converge. 
	
	As our first example, for any $s\in \mathbb{Z}$, we define the $L$-series $L(\textbf{1},s)$ corresponding to the trivial Taelman $t$-motive $\textbf{1}$ by
	\[
	L(\textbf{1},s):=\prod_{v\in A_{+}}(1-v^{-s})^{-1}=\sum_{a\in A_{+}}\frac{1}{a^s}\in K_{\infty}
	\] 
	where the product runs over irreducible elements in $A_{+}$ and it converges for any positive integer $s$ (see \cite[Sec. 8]{Goss}).
	
	In this subsection and the rest of the paper, for any positive integer $n$, we are mainly interested in the $L$-function $L(M_n,\cdot)$ of the effective $t$-motive $M_n$ defined in \S2.3 corresponding to  $G_n$ given in \eqref{E:tmodule}. Let $\rho=(\rho_w)$ be the family of homomorphisms $\rho_w:\Gal(K^{\text{sep}}/K)\to \GL_2(V_{w}(M_n))$ induced by the action of $\Gal(K^{\text{sep}}/K)$ on  $V_{w}(M_n)$. We know by Theorem \ref{T:Gal} that $\rho$ indeed forms a strictly compatible system and hence one can define the $L$-function $L(M_n,\cdot):=L((M_n,0),\cdot)=L_{U'}(\rho,\cdot)$ as in \eqref{E:lfunct}. We recall that the values of our $L$-function converge in $K_{\infty}$ simply after replacing the variable $t$ with $\theta$ as explained in \S1.2.  Furthermore one can check that the exceptional set $U'$ of primes of $A_{+}$ in this case is empty. 
	
	Since $M_{n}\cong (M^{\prime},m)$ for some effective $t$-motive $M^{\prime}$ and $m\in \mathbb{Z}$, by using \eqref{E:Lser}, one can recover values of $L(M_{n}^{\vee},s)$ in terms of $L(M^{\prime},s)$ whenever they are convergent. Indeed by \cite[Prop. 8]{Taelman}, we know that $L(M_n^{\vee},s)$ converges to an element in $K_{\infty}$ for any integer $s\geq 0$. 
	
	Before we finish this subsection, we introduce the Taelman $L$-value corresponding to an abelian $t$-module $G=(\mathbb{G}_{a/K}^{d},\psi)$ which plays a fundamental role to prove our main result. We refer the reader to \cite{Fang} and \cite{Taelman} for further details.
	
	For any finite $A$-module $M$, we set 
	\[
	|M|_{A}:=\det_{\mathbb{F}_q[X]}((1\otimes X)\Id - (\theta\otimes 1) |\ \ M\otimes_{\mathbb{F}_q}\mathbb{F}_q[X])_{|X=\theta}
	\]
	which is the characteristic polynomial of the map $\theta\otimes1$ on $M$ evaluated at $X=\theta$. 
	
	Let $B=(b_{i,j})\in \Mat_d(A)$ and $v\in A_{+}$ be a prime. We define the matrix $\overline{B}:=(\overline{b}_{i,j})\in \Mat_d(A/vA)$ where $\overline{b}_{i,j}\equiv b_{i,j} \pmod{v}$. For any $1\leq j \leq m$ and $x=[x_1,\dots,x_d]^{\tr}\in (A/vA)^d$, we set $x^{(j)}:=[x_1^{q^j},\dots,x_d^{q^j}]^{\tr}$. 
	
	We define $\Lie(G)(A/vA)$ to be the direct sum $(A/vA)^d$ of $d$-copies of $A/vA$ equipped with the $A$-module action given by
	\[
	\theta\cdot x:=\overline{\partial_{\psi}(\theta)}x, \ \ x\in (A/vA)^d.
	\] 
	Similarly, we define $G(A/wA)$ as $(A/wA)^d$ with the $A$-module action given by 
	\[
	\theta\cdot x:=\overline{A}_0x+\dots+\overline{A}_mx^{(m)}, \ \ x\in (A/vA)^d.
	\]

	Now following \cite{Fang}, we define the Taelman $L$-value $L(G/A)$ by the  infinite product
	\[
	L(G/A):=\prod_{v}\frac{|\Lie(G)(A/wA)|_{A}}{|G(A/wA)|_{A}}\in 1+\frac{1}{\theta}\mathbb{F}_q\Big[\Big[\frac{1}{\theta}\Big]\Big]
	\]
	where $v$ runs over all irreducible elements in $A_{+}$.

	\section{The Analysis on the Logarithm series $\Log_{G_n}$} 
	We fix a positive integer $n$ and a Drinfeld $A$-module $\phi$ of rank 2 given by $\phi_{\theta}=\theta+a\tau +b\tau^2$ where $a\in A$ and $b\in A\setminus\{0\}$ unless otherwise stated. We recall the definition of the $t$-module $G_n=(\mathbb{G}^{2n+1}_{a/K},\phi_n)$ from \eqref{E:tmodule}  and denote its logarithm series $\Log_{G_n}$ by 
	\[
	\Log_{G_n}=\sum_{i=0}^{\infty}P_i\tau^i, \text{ } P_0=\Id_{2n+1} \text{, } P_i=(P_{i,(j,k)})\in \Mat_{2n+1}(K).
	\]
	In this section, we analyze the coefficients $P_i$ and using Papanikolas' method in \cite[Sec. 4.1 and 4.3]{PLogAlg}, we determine certain elements lying in the convergence domain of the function $\Log_{G_n}$ induced by the logarithm series of $G_n$ if the coefficients of $\phi$ have certain conditions.
	\begin{lemma}[{cf. \cite[Lem. 4.1.1]{PLogAlg}}]\label{L:1} For any $i\geq 1$ and $j=0,1$, we consider $r_{1,j,i-1}:=P_{i-1,(2n+j,2n)}+a^{q^{i-1}}P_{i-1,(2n+j,2n+1)}\in K$ and define  $r_{2,j,i-1}:=b^{q^{i-1}}P_{i-1,(2n+j,2n+1)}\in K$. Let $R_{i,j} \in \Mat_{1\times 2n+1}(K)$ be given as
		\begin{multline*}
		R_{i,j}:=\Big[\frac{(-1)}{[i]}r_{1,j,i-1},\frac{(-1)}{[i]}r_{2,j,i-1},\dots,\\
		\frac{(-1)^n}{[i]^n}r_{1,j,i-1},\frac{(-1)^n}{[i]^n}r_{2,j,i-1},\frac{(-1)^{n+1}}{[i]^{n+1}}r_{1,j,i-1}\Big].
		\end{multline*}
		Then the $(2n)$-th and $(2n+1)$-st row of $P_i$ are given by $R_{i,0}$ and $R_{i,1}$ respectively.
	\end{lemma}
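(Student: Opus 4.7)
The plan is to derive the stated formula directly from the functional equation
$$\partial_{\phi_n}(\theta)\Log_{G_n} = \Log_{G_n}\,\phi_n(\theta)$$
satisfied by $\Log_{G_n}$, by comparing coefficients of $\tau^i$ and then exploiting the highly sparse structure of both $N$ and $E$. By Lemma~\ref{L:dpart} we have $\partial_{\phi_n}(\theta) = \theta\Id_{2n+1} + N$, while $\phi_n(\theta) = \theta\Id_{2n+1} + N + E\tau$. Writing $\Log_{G_n} = \sum_i P_i\tau^i$, using $\tau c = c^{q}\tau$, and using $N^{(i)}=N$ (since $N\in\Mat_{2n+1}(\mathbb{F}_q)$), comparing the coefficients of $\tau^i$ for $i\geq 1$ yields
$$(\theta\Id+N)P_i = P_i(\theta^{q^i}\Id+N) + P_{i-1}E^{(i-1)},$$
which I will rearrange as
$$NP_i - P_iN - [i]P_i = P_{i-1}E^{(i-1)}.$$

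Next I will restrict attention to the rows $k=2n$ and $k=2n+1$ (writing $k=2n+j$ with $j\in\{0,1\}$). The critical observation is that rows $2n$ and $2n+1$ of $N$ vanish, so the term $NP_i$ contributes nothing to those rows; only $P_iN$ and $[i]P_i$ remain on the left. Since $N$ has its nonzero entries at positions $(m,m+2)$ for $m\leq 2n-1$, the row $k$ of $P_iN$ at column $\ell$ equals $P_{i,(k,\ell-2)}$ for $\ell\geq 3$ and $0$ for $\ell\in\{1,2\}$. On the right-hand side, the sparse form of $E^{(i-1)}$ (with nonzero entries only at $(2n,1)=1$, $(2n+1,1)=a^{q^{i-1}}$, $(2n+1,2)=b^{q^{i-1}}$) shows that $(P_{i-1}E^{(i-1)})_{k,\ell}$ equals $r_{1,j,i-1}$ when $\ell=1$, equals $r_{2,j,i-1}$ when $\ell=2$, and is $0$ for $\ell\geq 3$. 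This produces the two-case entrywise recursion
$$-[i]\,P_{i,(k,1)} = r_{1,j,i-1},\qquad -[i]\,P_{i,(k,2)} = r_{2,j,i-1},$$
$$-P_{i,(k,\ell-2)} - [i]\,P_{i,(k,\ell)} = 0\quad\text{for all }\ell\geq 3.$$

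Finally I will solve this recursion by induction on $\ell$, noting that it decouples into two independent chains along the odd-indexed columns ($\ell=1,3,5,\dots,2n+1$) and the even-indexed columns ($\ell=2,4,\dots,2n$). Each step multiplies the previous entry by $-1/[i]$, yielding
$$P_{i,(k,2p-1)} = \frac{(-1)^p}{[i]^p}\,r_{1,j,i-1},\qquad P_{i,(k,2p)} = \frac{(-1)^p}{[i]^p}\,r_{2,j,i-1},$$
for the allowed ranges of $p$, which is exactly the row $R_{i,j}$ as claimed. The computation is essentially mechanical once the functional equation has been expanded; the only point requiring mild care is keeping the $(2n+1)\times(2n+1)$ indexing straight and observing that the absence of any $P_{i-1}$-contribution beyond column $2$ is precisely what allows the closed form to be obtained by a clean geometric-type recursion rather than a more complicated combinatorial sum.
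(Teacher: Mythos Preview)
Your proof is correct. Both your argument and the paper's rest on the same underlying facts: the functional equation $NP_i - P_iN - [i]P_i = P_{i-1}E^{(i-1)}$ coming from \eqref{E:funceq}, and the vanishing of the last two rows of $N$. The difference lies in how the resulting equation is solved. The paper invokes an external closed-form identity (from \cite{G19} and \cite{CMApril17}) expressing $P_i$ as a finite sum $-\sum_{j}\ad(N)^{j}(P_{i-1}E^{(i-1)})/[i]^{j+1}$, expands the adjoint via binomials, and then observes that only the terms with $m=0$ in $N^{m}P_{i-1}E^{(i-1)}N^{l-1}$ contribute to the bottom two rows. You instead restrict to rows $2n$ and $2n+1$ \emph{before} solving, obtaining the simple two-term column recursion $P_{i,(k,\ell)} = -P_{i,(k,\ell-2)}/[i]$ directly; this is more elementary and self-contained, avoiding the adjoint machinery and the external citations entirely. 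The paper's route has the advantage of giving a formula for \emph{all} of $P_i$ as a byproduct (which is later used implicitly in Proposition~\ref{P:2}), whereas your argument is tailored specifically to the last two rows---but for the lemma as stated that is all that is required.
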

	\begin{proof}
		Recall that $\phi_n(\theta)=\theta \Id_{2n+1}+N+E\tau$.
		For any two matrices $B_1,B_2\in \Mat_{2n+1}(K)$, set $[B_1,B_2]:=B_1B_2-B_2B_1$. Then we define $\ad(B_1)^{0}(B_2):=B_2$ and for $j\geq 1$, $\ad(B_1)^{j}(B_2)=[B_1,\ad^{j}(B_1)^{j-1}(B_2)]$. Using \cite[Lem. 3.4]{G19} and a similar argument as in \cite[Eq. 3.2.4]{CMApril17}, we have
		\begin{align*}
		P_i&=-\sum_{j=0}^{2(n+1)-2}\frac{\ad(N)^j(P_{i-1}E^{(i-1)})}{[i]^{j+1}}\\
		&=-\sum_{j=0}^{2(n+1)-2}\sum_{m=0}^j(-1)^{j-m}\binom{j}{m}\frac{N^mP_{i-1}E^{(i-1)}N^{j-m}}{[i]^{j+1}}.
		\end{align*}
		Note that $N^m=0$ when $m\geq n+1$ and $N^{j-m}=0$ if $j-m\geq n+1$. Therefore we see that 
		\begin{equation}\label{E:eq111}
		\begin{split}
		P_i&=-\sum_{m=0}^{n}\sum_{j=m}^{n+m}(-1)^{j-m}\binom{j}{m}\frac{N^mP_{i-1}E^{(i-1)}N^{j-m}}{[i]^{j+1}}\\
		&=\sum_{l=1}^{n+1}\sum_{m=0}^n(-1)^l\binom{l+m-1}{m}\frac{N^mP_{i-1}E^{(i-1)}N^{l-1}}{[i]^{l+m}},
		\end{split}
		\end{equation}
		where the last equality follows from setting $l=j-m+1$.  Since the last two rows of $N$ contain only zeros, one can notice from the direct calculation that the multiplication $N^mP_{i-1}E^{(i-1)}N^{l-1}$ has no contribution to the last two rows of $P_i$ if $m\geq 1$. Thus we only consider the case when $m=0$. Observe that 
		\[
		P_{i-1}E^{(i-1)}N^{l-1}=\begin{bmatrix}
		*&\dots &*&*&*&*&\dots&*\\
		\vdots& &\vdots&\vdots &\vdots&\vdots& &\vdots\\
		*& \dots&* &*&*&*&\dots &*\\
		0&\dots &0&r_{1,0,i-1}&r_{2,0,i-1}&0&\dots &0\\
		0&\dots&0&r_{1,1,i-1}&r_{2,1,i-1}&0&\dots &0
		\end{bmatrix}
		\]
		where the only non-zero elements occur in the $2(l-1)+1$-st and $2(l-1)+2$-nd coordinates of the last two rows. We also mention that when $l=n+1$, the non-zero terms appear only in the last coordinate of the last two rows which are actually the terms corresponding to $2(l-1)+1$-st coordinate when $n=l$. Thus, applying \eqref{E:eq111} together with above observation finishes the proof.
	\end{proof}	
	Let 
	\[
	\log_{\phi}=\sum_{i\geq 0}\gamma_i\tau^i
	\]
	be the logarithm series of $\phi$ defined so that $\gamma_0=1$ and 
	\begin{equation}\label{E:Eq1}
	\theta \log_{\phi}=\log_{\phi}\phi_{\theta}.
	\end{equation}
	
	Recall that the logarithm series $\log_{C}$ of the Carlitz module is defined by $\log_{C}=\sum_{i\geq 0}L_i^{-1}\tau^i$ where $L_0=1$ and $L_i=(-1)^{i}[i][i-1]\dots[1]$. 
	
	We also recall the definition of shadowed partitions and elements $F_i$ for all $i\geq 0$ from \S1.3 and prove the following proposition.
	\begin{proposition}\label{P:1} For any $i\geq 2$, we have 
		\[
		\frac{-1}{[i]}(b^{q^{i-2}}F_{i-2}+a^{q^{i-1}}F_{i-1})=F_i.
		\]
	\end{proposition}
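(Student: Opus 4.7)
The plan is to prove the recursion combinatorially by decomposing the sum $F_i = \sum_{U \in \mathcal{P}_2^{1}(i)} \mathcal{C}_U$ according to where the largest index $i-1$ appears in the shadowed partition. For $(S_1, S_2) \in \mathcal{P}_2^{1}(i)$, the disjoint decomposition $S_1 \sqcup S_2 \sqcup (S_2+1) = \{0,1,\dots,i-1\}$ tells us that $i-1$ lies in exactly one of $S_1$, $S_2$, or $S_2+1$. The possibility $i-1 \in S_2$ is ruled out because it would force $i \in S_2+1 \subseteq \{0,\dots,i-1\}$, contradicting $i \notin \{0,\dots,i-1\}$. Thus only two cases remain: Case A with $i-1 \in S_1$, and Case B with $i-2 \in S_2$ (equivalently $i-1 \in S_2+1$).

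I would then exhibit explicit bijections for each case. For Case A, $(S_1, S_2) \mapsto (S_1 \setminus \{i-1\}, S_2)$ maps $\mathcal{P}_2^{1}(i)$-partitions with $i-1 \in S_1$ bijectively onto $\mathcal{P}_2^{1}(i-1)$; the condition $0 \in S_1 \setminus \{i-1\}$ is preserved because $i \geq 2$ forces $i-1 \neq 0$, and the inverse simply adjoins $i-1$ back to $S_1$. Comparing components, removing $i-1$ from $S_1$ decreases $w_2(S_1)$ by exactly $q^{i-1}$ (since $i-1 \neq 0$) and strips the single factor $-[i]$ from $\prod_{j \in S_1}(-[j+1])$, while all other quantities are unchanged. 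This gives
\[
\mathcal{C}_{(S_1,S_2)} = \frac{a^{q^{i-1}}}{-[i]}\,\mathcal{C}_{(S_1 \setminus \{i-1\}, S_2)},
\]
so the Case A contribution to $F_i$ equals $-a^{q^{i-1}}F_{i-1}/[i]$.

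For Case B, the map $(S_1, S_2) \mapsto (S_1, S_2 \setminus \{i-2\})$ is a bijection from $\mathcal{P}_2^{1}(i)$-partitions with $i-2 \in S_2$ onto $\mathcal{P}_2^{1}(i-2)$: removing $i-2$ from $S_2$ simultaneously removes $i-1$ from $S_2+1$, correctly shortening the partitioned interval to $\{0,\dots,i-3\}$, while $S_1$ (and hence the condition $0 \in S_1$) is untouched. Here removing $i-2$ from $S_2$ decreases $w_1(S_2)$ by $q^{i-2}$ and strips the factor $-[i]$ from $\prod_{j \in S_2}(-[j+2])$, giving
\[
\mathcal{C}_{(S_1,S_2)} = \frac{b^{q^{i-2}}}{-[i]}\,\mathcal{C}_{(S_1, S_2 \setminus \{i-2\})}.
\]
Summing this over Case B produces $-b^{q^{i-2}}F_{i-2}/[i]$; adding the two case contributions yields the claimed formula.

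The only delicate point is the trichotomy step and the verification that the two reduction maps are well-defined bijections onto the correct target sets; once that is in place, the factor computation is routine. For the boundary case $i=2$, Case B degenerates because any $S_2 \ni 0$ forces $S_1 = \emptyset$ and hence $(S_1,S_2) \notin \mathcal{P}_2^{1}(2)$, which is consistent with the convention $F_0 = 0$.
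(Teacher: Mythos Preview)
Your proof is correct and takes essentially the same approach as the paper: both arguments rest on the bijection between $\mathcal{P}_2^{1}(i)$ and $\mathcal{P}_2^{1}(i-1)\sqcup\mathcal{P}_2^{1}(i-2)$ obtained by tracking whether $i-1\in S_1$ or $i-2\in S_2$, together with the routine check that the component $\mathcal{C}_U$ picks up the factor $a^{q^{i-1}}/(-[i])$ or $b^{q^{i-2}}/(-[i])$ accordingly. The only cosmetic difference is direction: the paper builds the forward map $\alpha$ from the disjoint union into $\mathcal{P}_2^{1}(i)$ and argues surjectivity via the same trichotomy you used, whereas you start from $\mathcal{P}_2^{1}(i)$ and project down; your write-up is in fact slightly more explicit about the boundary case $i=2$ and about why $i-1\in S_2$ is impossible.
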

	\begin{proof} Note that for any $\mathcal{U}_1=(S_{1,1},S_{1,2})\in \mathcal{P}_2^{1}(i-2)$ and $\mathcal{U}_2=(S_{2,1},S_{2,2})\in \mathcal{P}_2^{1}(i-1)$ with corresponding components $\mathcal{C}_{\mathcal{U}_1}$ and $\mathcal{C}_{\mathcal{U}_2}$ respectively, the elements $-b^{q^{i-2}}\mathcal{C}_{\mathcal{U}_1}/[i]$ and $-a^{q^{i-1}}\mathcal{C}_{\mathcal{U}_2}/[i]$ are the corresponding components  to  the shadowed partitions $(S_{1,1}, S_{1,2}\cup \{i-2\})$ and $(S_{2,1}\cup \{i-1\},S_{2,2})$ in $\mathcal{P}_2(i)$ respectively and they are actually distinct elements of  $\mathcal{P}_2^{1}(i)$ by definition. Define the map $\alpha: \mathcal{P}_2^{1}(i-2)\sqcup \mathcal{P}_2^{1}(i-1)\to \mathcal{P}_2^{1}(i)$ by
		\[
		\alpha(\mathcal{U}):=\begin{cases}
		(S_1, S_2\cup \{i-2\}) \text{ if } \mathcal{U}=(S_1, S_2)\in P_2^{1}(i-2)\\
		(S_1\cup \{i-1\}, S_2) \text { if } \mathcal{U}=(S_1, S_2)\in P_2^{1}(i-1).
		\end{cases}
		\]
		By the above discussion $\alpha$ is  injective. Furthermore, it is also surjective as any element $\mathcal{U}=(S^{\prime}_1,S^{\prime}_2)$ of $\mathcal{P}_2^{1}(i)$  has the property that either $\{i-1\}\in S_1^{\prime}$ or $\{i-2\}\in S_2^{\prime}$. For the former case we have  $\alpha((S^{\prime}_1\setminus\{i-1\},S_2^{\prime}))=\mathcal{U}$ and for the latter case $\alpha((S^{\prime}_1,S_2^{\prime}\setminus\{i-2\}))=\mathcal{U}$. Thus  the proof is completed after summing the components of $\gamma_{i}$ corresponding to shadowed partitions in $\mathcal{P}_2^{1}(i)$.
		
	\end{proof}
	Using Lemma \ref{L:1} and Proposition \ref{P:1}, we prove the following.
	\begin{corollary}\label{C:1} Let $1\leq k \leq n$. For any $i\geq 1$, the last row of $P_i$ is given by 
		\begin{multline*}
		\Big[\frac{(-1)^{n}[i]^n\gamma_i}{L_i^n},\frac{(-1)^{n-1}[i]^{n-1}b^{q^{i-1}}\gamma_{i-1}}{L_i^n},\dots,\\
		\frac{(-1)^{n+1-k}[i]^{n+1-k}\gamma_i}{L_i^n},\frac{(-1)^{n-k}[i]^{n-k}b^{q^{i-1}}\gamma_{i-1}}{L_i^n},\dots,\frac{\gamma_i}{L_i^n}   \Big],
		\end{multline*}
		and the $(2n)$-th row of $P_i$ is given by
		\begin{multline*}
		\Big[\frac{(-1)^{n}[i]^nF_i}{L_i^n},\frac{(-1)^{n-1}[i]^{n-1}b^{q^{i-1}}F_{i-1}}{L_i^n},\dots,\\
		\frac{(-1)^{n+1-k}[i]^{n+1-k}F_i}{L_i^n},\frac{(-1)^{n-k}[i]^{n-k}b^{q^{i-1}}F_{i-1}}{L_i^n},\dots,\frac{F_i}{L_i^{n}}\Big].
		\end{multline*}
	\end{corollary}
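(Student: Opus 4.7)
The plan is to prove both row formulas simultaneously by induction on $i \geq 1$, exploiting the fact that Lemma \ref{L:1} determines each of the last two rows entirely by the two scalars $r_{1,j,i-1}$ and $r_{2,j,i-1}$. So at each step I only need to compute two numerators and then match coefficients against the pattern $R_{i,j}$.

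For the base case $i=1$, since $P_0 = \Id_{2n+1}$ I read off $r_{1,0,0}=1$, $r_{2,0,0}=0$, $r_{1,1,0}=a$, $r_{2,1,0}=b$. The initial logarithm coefficient is $\gamma_1 = -a/[1] = a/L_1$, and since $\mathcal{P}_2^{1}(1) = \{(\{0\},\emptyset)\}$ contributes a single component $1/(-[1])$, we also have $F_1 = 1/L_1$. Substituting into $R_{1,0}$ and $R_{1,1}$ and using $L_1 = -[1]$ verifies both formulas entry-by-entry.

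For the inductive step, assume the formulas hold for $i-1$. Reading the last two columns of the $(2n)$-th and $(2n+1)$-st rows of $P_{i-1}$ off the inductive hypothesis gives
\[
P_{i-1,(2n,2n+1)} = \tfrac{F_{i-1}}{L_{i-1}^n},\ \ P_{i-1,(2n,2n)} = \tfrac{b^{q^{i-2}}F_{i-2}}{L_{i-1}^n},\ \ P_{i-1,(2n+1,2n+1)} = \tfrac{\gamma_{i-1}}{L_{i-1}^n},\ \ P_{i-1,(2n+1,2n)} = \tfrac{b^{q^{i-2}}\gamma_{i-2}}{L_{i-1}^n}.
\]
Plugging these into the definitions in Lemma \ref{L:1} produces numerators $b^{q^{i-2}}F_{i-2}+a^{q^{i-1}}F_{i-1}$ and $b^{q^{i-2}}\gamma_{i-2}+a^{q^{i-1}}\gamma_{i-1}$. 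Proposition \ref{P:1} collapses the first to $-[i]F_i$. For the second, I extract the parallel recurrence for the logarithm coefficients by comparing coefficients of $\tau^i$ in the functional equation \eqref{E:Eq1}: writing out $\theta\gamma_i = \gamma_i\theta^{q^i}+\gamma_{i-1}a^{q^{i-1}}+\gamma_{i-2}b^{q^{i-2}}$ and rearranging yields $-[i]\gamma_i = a^{q^{i-1}}\gamma_{i-1}+b^{q^{i-2}}\gamma_{i-2}$. Thus $r_{1,0,i-1} = -[i]F_i/L_{i-1}^n$ and $r_{1,1,i-1} = -[i]\gamma_i/L_{i-1}^n$, while $r_{2,0,i-1} = b^{q^{i-1}}F_{i-1}/L_{i-1}^n$ and $r_{2,1,i-1} = b^{q^{i-1}}\gamma_{i-1}/L_{i-1}^n$. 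Substituting back into $R_{i,j}$ and using the identity $L_i^n = (-1)^n[i]^n L_{i-1}^n$ to absorb the factors $(-1)^k/[i]^k$ matches each entry to the claimed formula.

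The only real obstacle is bookkeeping: tracking the three independent sign counters ($(-1)^k$ from $R_{i,j}$, $(-1)^n$ from $L_i^n$, and $(-1)^{n+1-k}$ or $(-1)^{n-k}$ from the target), and correctly aligning the Frobenius twists $q^{i-1}$ versus $q^{i-2}$. The boundary case $i=2$ deserves a brief check, since the inductive hypothesis produces $P_{1,(2n,2n)}$ and $P_{1,(2n+1,2n)}$ from the $i-1=1$ version of the formula; but these agree with the values obtained from $F_0 = 0$ and $\gamma_0 = 1$, so both recurrences remain valid at the left edge without modification.
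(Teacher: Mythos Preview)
Your proof is correct and follows essentially the same approach as the paper: induction on $i$ via Lemma~\ref{L:1}, with the base case verified directly from $P_0=\Id_{2n+1}$, and the inductive step handled by extracting the recurrence $-[i]\gamma_i = a^{q^{i-1}}\gamma_{i-1}+b^{q^{i-2}}\gamma_{i-2}$ from \eqref{E:Eq1} for the last row and invoking Proposition~\ref{P:1} for the $(2n)$-th row. The paper organizes the inductive computation slightly differently (going from $i$ to $i+1$ and writing the entrywise identities \eqref{E:eq2}--\eqref{E:eq3} before absorbing the $[i+1]$ factors into $L_{i+1}^n$), but the content is the same.
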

	\begin{proof} We do induction on $i$. Note that if $i=1$, then Lemma \ref{L:1} shows that the last row and the $(2n)$-th row of $P_1$ are given by
		
		\[
		\Big[\frac{a}{L_1},\frac{b}{L_1},\dots,\frac{a}{L_1^{n}},\frac{b}{L_1^n},\frac{a}{L_1^{n+1}}   \Big]
		\text{ and }
		\Big[\frac{1}{L_1},0,\dots,\frac{1}{L_i^n},0,\frac{1}{L_1^{n+1}}\Big]
		\]
		respectively. By using \eqref{E:Eq1}, we see that $\gamma_1=a/(\theta-\theta^q)=a/L_1$ which implies that the induction hypothesis holds for $i=1$. Assume that it holds for all $i$. We show that the hypothesis holds for $i+1$. Observe that for any $1\leq k \leq n+1$, using the functional equation \eqref{E:Eq1}, we have
		\begin{equation}\label{E:eq2}
		\begin{split}
		\frac{(-1)^k}{[i+1]^k}\Big(\frac{b^{q^{i-1}}\gamma_{i-1}+a^{q^i}\gamma_i}{L_i^{n}}\Big)&=\frac{(-1)^{n+1-k}[i+1]^{n+1-k}\gamma_{i+1}}{L_{i}^n[i+1]^n(-1)^n}\\
		&=\frac{(-1)^{n+1-k}[i+1]^{n+1-k}\gamma_{i+1}}{L_{i+1}^n}.
		\end{split}
		\end{equation}
		Similarly for $1\leq k \leq n$, we also obtain
		\begin{equation}\label{E:eq3}
		\frac{(-1)^k}{[i+1]^k}\frac{b^{q^{i}}\gamma_i}{L_i^{n}}=\frac{(-1)^{n-k}[i+1]^{n-k}b^{q^{i}}\gamma_{i}}{L_{i}^n[i+1]^n(-1)^n}=\frac{(-1)^{n-k}[i+1]^{n-k}b^{q^{i}}\gamma_{i}}{L_{i+1}^n}.
		\end{equation}
		Thus, using Lemma \ref{L:1}, \eqref{E:eq2} and \eqref{E:eq3}, we obtain that the induction hypothesis holds for the last row. For the $(2n)$-th row, by using the similar calculations above replacing $\gamma_{i-1}$ with $F_{i-1}$ and $\gamma_i$ with $F_i$ and applying Proposition \ref{P:1} we also deduce that the latter  statement of the corollary holds.
	\end{proof} 
	By definition, for $i\geq 1$, we see that $F_i$ is of the form 
	\begin{equation}\label{E:eq4}
	F_i=\frac{a^{q^{x_1}}b^{y_1}}{(-1)^{k_1}[n_{11}]\dots [n_{1k_1}]}+\dots +\frac{a^{q^{x_r}}b^{y_r}}{(-1)^{k_r}[n_{r1}]\dots [n_{rk_r}]}
	\end{equation}
	where $x_j,n_j,y_j\in \mathbb{Z}_{\geq 0}$ for $1\leq j \leq r$. Recall  that $t$ is an independent variable over $\CC_{\infty}$. Then for each $F_i\in K$ of the form \eqref{E:eq4}, we set,
	\[
	\tilde{F}_i(t):=\frac{a^{q^{x_1}}b^{y_1}}{(t-\theta^{q^{n_{11}}})\dots (t-\theta^{q^{n_{1k_1}}})}+\dots +\frac{a^{q^{x_r}}b^{y_r}}{(t-\theta^{q^{n_{r1}}})\dots(t-\theta^{q^{n_{rk_{r}}}})}
	\]
	and observe that $\tilde{F}_i(\theta)=F_i$. Furthermore, we define $\tilde{T}_i(t)$ in a similar way using the definition of $T_i$ in \S1.3 so that $\tilde{T}_i(\theta)=T_i$ and for each $i\in \mathbb{Z}_{\geq 0}$, set $\Upsilon_{i}(t):=a\tilde{F}_i(t)+\tilde{T}_i(t)$. It is now easy to notice that $\Upsilon_i(\theta)=\gamma_{i}$. 
	
	Let $g_1(t),
	\dots,g_{2n+1}(t)$ be elements in $K(t)$. We define the matrices $\partial_{1,t}[g_1(t),
	\dots,g_{2n+1}(t)]$ and $\partial_{2,t}[g_1(t),
	\dots,g_{2n+1}(t)]$ in $\Mat_{2n+1}(K(t))$ by
	\[
	\partial_{1,t}[g_1(t),
	\dots,g_{2n+1}(t)]:=\begin{bmatrix}
	\partial_{t}^{n}(g_1(t))& \dots & \partial_{t}^{n}(g_{2n+1}(t))\\
	0&\dots & 0\\
	\vdots &  & \vdots \\
	\partial_{t}(g_1(t))& \dots & \partial_{t}(g_{2n+1}(t))\\
	0&\dots & 0\\
	g_1(t)&\dots & g_{2n+1}(t)
	\end{bmatrix}
	\]
	and 
	\[
	\partial_{2,t}[g_1(t),
	\dots,g_{2n+1}(t)]:=\begin{bmatrix}
	0&\dots & 0\\
	\partial_{t}^{n-1}(g_1(t))& \dots & \partial_{t}^{n-1}(g_{2n+1}(t))\\
	\vdots &  & \vdots \\
	0&\dots & 0\\
	g_1(t)&\dots & g_{2n+1}(t)\\
	0&\dots & 0
	\end{bmatrix}.
	\]
	For any $a(t)\in K(t)$ we also consider $\tilde{d}_{n}[a(t)]\in \Mat_{2n+1}(K(t))$ given by
	\[
	\tilde{d}_{n}[a(t)]:=\begin{bsmallmatrix}
	a(t)&0&\partial_{t}(a(t))&0&\partial^2_{t}(a(t))&\dots &0&\partial_{t}^{n}(a(t))\\
	&a(t)&0&\partial_{t}(a(t))& & \\
	& & \ddots &\ddots  &\ddots  & \\
	& & & \ddots &\ddots  &\ddots&  & \\
	& & & &\ddots &\ddots  &\ddots  & \\
	& &  & & & a(t)&0&\partial_{t}(a(t))\\
	& &  & & & & a(t)&0\\
	& & & & & & &a(t)
	\end{bsmallmatrix}.
	\]
	Let $\mathbb{L}_0(t):=1$ and for $i\geq 1$, we define the deformation $\mathbb{L}_i(t)$ of elements $L_i$ by 
	\[
	\mathbb{L}_i(t):=(t-\theta^{q^i})\dots(t-\theta^{q})\in K[t].
	\] 
	From the definitions, we have  $\mathbb{L}_i(\theta)=L_i$.  Finally, define $\mathbb{P}_0(t):=\Id_{2n+1}$ and for all $i\geq 1$ and $1\leq k \leq n$, we set  
	\begin{align*}
	\mathbb{P}_i(t):&=\partial_{1,t}\bigg[\frac{(t-\theta^{q^i})^n\Upsilon_{i}(t)}{\mathbb{L}_i(t)^n}, \frac{(t-\theta^{q^i})^{n-1}b^{q^{i-1}}\Upsilon_{i-1}(t)}{\mathbb{L}_i(t)^n},\dots, \\
	&\ \ \ \ \ \ \ \ \ \ \ \  \frac{(t-\theta^{q^i})^{n+1-k}\Upsilon_{i}(t)}{\mathbb{L}_i(t)^n}, \frac{(t-\theta^{q^i})^{n-k}b^{q^{i-1}}\Upsilon_{i-1}(t)}{\mathbb{L}_i(t)^n},\dots,\frac{\Upsilon_{i}(t)}{\mathbb{L}_i(t)^n}\bigg]\\
	&\ \ \ +\partial_{2,t}\bigg[\frac{(t-\theta^{q^i})^n\tilde{F}_{i}(t)}{\mathbb{L}_i(t)^n}, \frac{(t-\theta^{q^i})^{n-1}b^{q^{i-1}}\tilde{F}_{i-1}(t)}{\mathbb{L}_i(t)^n},\dots,\\
	&\ \ \ \ \ \ \ \ \ \ \ \ \ \frac{(t-\theta^{q^i})^{n+1-k}\tilde{F_{i}}(t)}{\mathbb{L}_i(t)^n},\frac{(t-\theta^{q^i})^{n-k}b^{q^{i-1}}\tilde{F}_{i-1}(t)}{\mathbb{L}_i(t)^n},\dots,\frac{\tilde{F_{i}}(t)}{\mathbb{L}_i(t)^n}\bigg].
	\end{align*}
	The next proposition will be useful to deduce some facts about the domain of convergence of $\Log_{G_n}$. 
	\begin{proposition}\label{P:2} For any $i\geq 0$, we have that  $\mathbb{P}_i(\theta)=P_i$.
	\end{proposition}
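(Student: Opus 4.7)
The plan is to proceed by induction on $i$. The base case $i=0$ is immediate from $\mathbb{P}_0(t) = \Id_{2n+1} = P_0$. For the inductive step, I would first verify that the last two rows of $\mathbb{P}_i(\theta)$ match those of $P_i$ computed in Corollary \ref{C:1}, and then handle the remaining rows by combining the functional equation for $\Log_{G_n}$ with the properties of hyperderivatives.

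For the bottom two rows, the check is direct: the $(2n+1)$-st row of $\mathbb{P}_i(t)$ is read off from the $0$-th hyperderivative entries of the $\partial_{1,t}$-component, so evaluating at $t = \theta$ and using $\Upsilon_i(\theta) = \gamma_i$, $\mathbb{L}_i(\theta) = L_i$, and $(t - \theta^{q^i})|_{t=\theta} = -[i]$ reproduces the formula of Corollary \ref{C:1}. The analogous check using $\tilde F_i(\theta) = F_i$ handles the $(2n)$-th row via the $0$-th hyperderivative entries of $\partial_{2,t}$.

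For the remaining rows, matching coefficients of $\tau^i$ in the functional equation $\partial_{\phi_n}(\theta)\Log_{G_n} = \Log_{G_n}\phi_n(\theta)$ yields the commutator relation
\[
(\theta\Id_{2n+1}+N)P_i - P_i(\theta^{q^i}\Id_{2n+1}+N) = P_{i-1}E^{(i-1)}.
\]
Since $N$ shifts rows by two (i.e., $N_{j,j+2}=1$), this forces $(P_i)_{j,k-2} = (P_i)_{j+2,k} - [i](P_i)_{j,k}$ whenever $j \leq 2n-1$ and $k \geq 3$, with the initial data for columns $k=1,2$ supplied by $P_{i-1}E^{(i-1)}$. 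I would then show that the hyperderivative structure of $\mathbb{P}_i(t)$ encodes precisely this recursion when evaluated at $t=\theta$: by Proposition \ref{P:CGM}, the hyperderivative $\partial_t^j$ of a power series at $t=\theta$ extracts the $j$-th Taylor coefficient in $(t-\theta)$, and the factor $(t-\theta^{q^i})^{n+1-k}$ combined with the product rule \eqref{E:rule} transforms each increment of $j$ into exactly one step of the row-shift recursion above, producing the required factor of $[i]$ at each step.

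The main technical obstacle will be the bookkeeping in this final step: verifying that all signs, binomial coefficients, and index shifts align, and that the inhomogeneous initial terms coming from $P_{i-1}E^{(i-1)}$ in columns $k=1,2$ agree with the corresponding deformed quantities appearing in $\mathbb{P}_i(t)$. In particular, the presence of $b^{q^{i-1}}\tilde F_{i-1}(t)$ and $b^{q^{i-1}}\Upsilon_{i-1}(t)$ in the defining formula for $\mathbb{P}_i(t)$ must be matched to the entries produced by the commutator recursion, and this is exactly the role played by Proposition \ref{P:1}. Once these entrywise identities are confirmed, the induction closes and Proposition \ref{P:2} follows.
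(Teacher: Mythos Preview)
Your approach is viable but organized differently from the paper's. The paper does not verify the bottom two rows separately via Corollary~\ref{C:1} and then recurse upward row by row; instead it establishes the full matrix recursion
\[
((\theta-\theta^{q^i})\Id_{2n+1}+N)\,\mathbb{P}_i(\theta)-\mathbb{P}_i(\theta)\,N=\mathbb{P}_{i-1}(\theta)\,E^{(i-1)}
\]
in one stroke, by computing $\tilde{d}_n[t-\theta^{q^i}]\,\mathbb{P}_i(t)-\mathbb{P}_i(t)N$ and $\mathbb{P}_{i-1}(t)E^{(i-1)}$ directly with the product rule~\eqref{E:rule}, then matching at $t=\theta$ using the recursion for $\gamma_i$ and Proposition~\ref{P:1}; uniqueness of the logarithm recursion finishes. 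Your entrywise scheme encodes the same identities---the key fact $g_{k-2}=(t-\theta^{q^i})g_k$ for $k\geq 3$ is precisely what makes your row-shift relation $\partial_t^l(g_{k-2})|_{t=\theta}+[i]\,\partial_t^l(g_k)|_{t=\theta}=\partial_t^{l-1}(g_k)|_{t=\theta}$ hold---so the argument would close. The paper's matrix packaging via $\tilde{d}_n$ buys much cleaner bookkeeping, particularly for the inhomogeneous columns $k=1,2$, where your approach forces you to unwind $(P_{i-1}E^{(i-1)})_{j,k}$ separately for every row $j$. Your route makes the role of the bottom-row formulas more explicit, but note that Corollary~\ref{C:1} is not actually used in the paper's proof, and Proposition~\ref{P:CGM} is not needed either: the algebraic product rule for hyperderivatives already suffices for all the identities involved.
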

	\begin{proof}
		Assume first that $i\geq 1$. Using \eqref{E:rule}, we can obtain
		\begin{equation}\label{E:eq5}
		\begin{split}
		&\tilde{d}_n[t-\theta^{q^i}]\mathbb{P}_i(t)\\
		&=\partial_{1,t}\bigg[\frac{(t-\theta^{q^i})^{n+1}\Upsilon_{i}(t)}{\mathbb{L}_i(t)^n}, \frac{(t-\theta^{q^i})^{n}b^{q^{i-1}}\Upsilon_{i-1}(t)}{\mathbb{L}_i(t)^n},\dots,\frac{(t-\theta^{q^i})^{n+2-k}\Upsilon_{i}(t)}{\mathbb{L}_i(t)^n}, \\
		&\ \ \ \ \ \ \ \  \ \ \ \frac{(t-\theta^{q^i})^{n+1-k}b^{q^{i-1}}\Upsilon_{i-1}(t)}{\mathbb{L}_i(t)^n},\dots,\frac{(t-\theta^{q^{i}})\Upsilon_{i}(t)}{\mathbb{L}_i(t)^n}\bigg]\\
		&\ \ \  + \partial_{2,t}\bigg[\frac{(t-\theta^{q^i})^{n+1}\tilde{F}_{i}(t)}{\mathbb{L}_i(t)^n}, \frac{(t-\theta^{q^i})^{n}b^{q^{i-1}}\tilde{F}_{i-1}(t)}{\mathbb{L}_i(t)^n},\dots,\frac{(t-\theta^{q^i})^{n+2-k}\tilde{F_{i}}(t)}{\mathbb{L}_i(t)^n}, \\
		&\ \ \ \ \ \ \ \ \ \  \ \ \ \frac{(t-\theta^{q^i})^{n+1-k}b^{q^{i-1}}\tilde{F}_{i-1}(t)}{\mathbb{L}_i(t)^n},\dots,\frac{(t-\theta^{q^{i}})\tilde{F_{i}}(t)}{\mathbb{L}_i(t)^n}\bigg].	
		\end{split}
		\end{equation}
		Note also that we have
		\begin{equation}\label{E:eq6}
		\begin{split}
		&\mathbb{P}_i(t)N\\
		&=\partial_{1,t}\bigg[0,0,\frac{(t-\theta^{q^i})^n\Upsilon_i(t)}{\mathbb{L}_i(t)^n},\frac{(t-\theta^{q^i})^{n-1}b^{q^{i-1}}\Upsilon_{i-1}(t)}{\mathbb{L}_i(t)^n},\dots,\\
		&\ \ \ \  \ \ \ \ \ \ \  \frac{(t-\theta^{q^i})^{n+1-k}\Upsilon_{i}(t)}{\mathbb{L}_i(t)^n},\frac{(t-\theta^{q^i})^{n-k}b^{q^{i-1}}\Upsilon_{i-1}(t)}{\mathbb{L}_i(t)^n},\dots, \frac{(t-\theta^{q^i})\Upsilon_i(t)}{\mathbb{L}_i(t)^n}\bigg]\\
		&\ \ \ +\partial_{2,t}\bigg[0,0,\frac{(t-\theta^{q^i})^n\tilde{F}_i(t)}{\mathbb{L}_i(t)^n},\frac{(t-\theta^{q^i})^{n-1}b^{q^{i-1}}\tilde{F}_{i-1}(t)}{\mathbb{L}_i(t)^n},\dots,\\
		&\ \ \ \  \ \ \ \ \ \ \ \ \  \frac{(t-\theta^{q^i})^{n+1-k}\tilde{F_{i}}(t)}{\mathbb{L}_i(t)^n},\frac{(t-\theta^{q^i})^{n-k}b^{q^{i-1}}\tilde{F}_{i-1}(t)}{\mathbb{L}_i(t)^n},\dots, \frac{(t-\theta^{q^i})\tilde{F}_i(t)}{\mathbb{L}_i(t)^n}\bigg].
		\end{split}
		\end{equation}
		Thus, combining \eqref{E:eq5} and \eqref{E:eq6} we obtain
		\begin{equation}\label{E:eq16}
		\begin{split}
		&\tilde{d}_n[t-\theta^{q^i}]\mathbb{P}_i(t)-	\mathbb{P}_i(t)N\\
		&=\partial_{1,t}\bigg[\frac{(t-\theta^{q^i})^{n+1}\Upsilon_{i}(t)}{\mathbb{L}_i(t)^n}, \frac{(t-\theta^{q^i})^{n}b^{q^{i-1}}\Upsilon_{i-1}(t)}{\mathbb{L}_i(t)^n},0,\dots,0\bigg]	\\
		&\ \ \ \ \  +\partial_{2,t}\bigg[\frac{(t-\theta^{q^i})^{n+1}\tilde{F}_{i}(t)}{\mathbb{L}_i(t)^n}, \frac{(t-\theta^{q^i})^{n}b^{q^{i-1}}\tilde{F}_{i-1}(t)}{\mathbb{L}_i(t)^n},0,\dots,0\bigg].
		\end{split}
		\end{equation}
		On the other hand, again by using \eqref{E:rule}, we also have
		\begin{equation}\label{E:eq17}
		\begin{split}
		\mathbb{P}_{i-1}E^{(i-1)}&=\partial_{1,t}\bigg[\frac{\Upsilon_{i-2}(t)b^{q^{i-2}}+a^{q^{i-1}}\Upsilon_{i-1}(t)}{\mathbb{L}_{i-1}(t)^n},\frac{b^{q^{i-1}}\Upsilon_{i-1}(t)}{\mathbb{L}_{i-1}(t)^n},0,\dots,0\bigg]\\
		&\ \ \ \  +\partial_{2,t}\bigg[\frac{\tilde{F}_{i-2}(t)b^{q^{i-2}}+a^{q^{i-1}}\tilde{F}_{i-1}(t)}{\mathbb{L}_{i-1}(t)^n},\frac{b^{q^{i-1}}\tilde{F}_{i-1}(t)}{\mathbb{L}_{i-1}(t)^n},0,\dots,0\bigg].
		\end{split}
		\end{equation}
		By the functional equation \eqref{E:Eq1}, we see that 
		\begin{equation}\label{E:eq9}
		\begin{split}
		\frac{\Upsilon_{i-2}(t)b^{q^{i-2}}+a^{q^{i-1}}\Upsilon_{i-1}(t)}{\mathbb{L}_{i-1}(t)^n}\Big|_{t=\theta}&=\frac{(t-\theta^{q^i})^{n+1}\Upsilon_i(t)}{(t-\theta^{q^i})^n\mathbb{L}_{i-1}^n(t)} \Big|_{t=\theta}\\
		&=\frac{(t-\theta^{q^i})^{n+1}\Upsilon_{i}(t)}{\mathbb{L}_i(t)^n}\Big|_{t=\theta}.
		\end{split}
		\end{equation}
		Similarly, we also have
		\begin{equation}\label{E:eq10}
		\frac{b^{q^{i-1}}(t-\theta^{q^i})^n\Upsilon_{i-1}(t)}{(t-\theta^{q^i})^n\mathbb{L}_{i-1}(t)^n}\Big|_{t=\theta}=\frac{b^{q^{i-1}}(t-\theta^{q^i})^n\Upsilon_{i-1}(t)}{\mathbb{L}_{i}(t)^n}\Big|_{t=\theta}.
		\end{equation}
		By Proposition \ref{P:1}, similar calculation as in \eqref{E:eq9} and \eqref{E:eq10}  also gives 
		\begin{equation}\label{E:eq11}
		\begin{split}
		\frac{\tilde{F}_{i-2}(t)b^{q^{i-2}}+a^{q^{i-1}}\tilde{F}_{i-1}(t)}{\mathbb{L}_{i-1}(t)^n}\Big|_{t=\theta}&=\frac{(t-\theta^{q^i})^{n+1}\tilde{F}_i(t)}{(t-\theta^{q^i})^n\mathbb{L}_{i-1}^n(t)} \Big|_{t=\theta}\\
		&=\frac{(t-\theta^{q^i})^{n+1}\tilde{F}_{i}(t)}{\mathbb{L}_i(t)^n}\Big|_{t=\theta}
		\end{split}
		\end{equation}
		and
		\begin{equation}\label{E:eq12}
		\frac{b^{q^{i-1}}(t-\theta^{q^i})^n\tilde{F}_{i-1}(t)}{(t-\theta^{q^i})^n\mathbb{L}_{i-1}(t)^n}\Big|_{t=\theta}=\frac{b^{q^{i-1}}(t-\theta^{q^i})^n\tilde{F}_{i-1}(t)}{\mathbb{L}_{i}(t)^n}\Big|_{t=\theta}.
		\end{equation}
		Moreover, by definition, we have
		\begin{equation}\label{E:eq14}
		\tilde{d}_n[t-\theta^{q^i}]_{|t=\theta}=(\theta-\theta^{q^i})\Id_{2n+1}+N.
		\end{equation}
		Finally, evaluating both sides of \eqref{E:eq16} and \eqref{E:eq17} at $t=\theta$ together with using \eqref{E:eq9}, \eqref{E:eq10}, \eqref{E:eq11}, \eqref{E:eq12} and \eqref{E:eq14}, we see that
		\[
		((\theta-\theta^{q^i})\Id_{2n+1}+N)\mathbb{P}_i(\theta)-\mathbb{P}_i(\theta)N=\mathbb{P}_{i-1}(\theta)E^{(i-1)}
		\]
		which implies that the matrix $\mathbb{P}_i(\theta)$ satisfies the same functional equation \eqref{E:funceq} as $P_i$ does. Since such a solution is unique, we conclude that $\mathbb{P}_i(\theta)=P_i$ for $i\geq 1$. Note that when $i=0$, the proposition follows from the definition of $\mathbb{P}_0(t)$. Thus we finish the proof.
	\end{proof}
	We are now ready to give the main result of this section.
	\begin{theorem}\label{T:T1} Let $\phi$ be the Drinfeld $A$-module of rank 2 defined as in \eqref{E:rank2} such that  $a\in \mathbb{F}_q$ and $b\in \mathbb{F}_q^{\times}$. Let $G_n$ be the $t$-module constructed from $\phi$ and $C^{\otimes n}$ as in \eqref{E:tmodule}. Then the logarithm function $\Log_{G_n}$ of $G_n$ converges on the polydisc $\mathfrak{D}_n:=\{x\in \Lie(G_n)(\CC_{\infty})| \ \ \inorm{x}\leq 1\}$.
	\end{theorem}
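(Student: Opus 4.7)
The strategy is to prove $\inorm{P_i} \to 0$ as $i \to \infty$, which suffices because for any $x \in \mathfrak{D}_n$ one has $\inorm{P_i\tau^i(x)} \leq \inorm{P_i}\cdot\inorm{x}^{q^i} \leq \inorm{P_i}$, forcing ultrametric convergence of $\sum_i P_i \tau^i(x)$.

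The plan is to combine Proposition \ref{P:2}, which writes $P_i = \mathbb{P}_i(\theta)$, with Proposition \ref{P:CGM}. Under the hypotheses $a \in \mathbb{F}_q$ and $b \in \mathbb{F}_q^{\times}$, all Frobenius twists $a^{q^j}, b^{q^j}$ collapse to $a, b$, and each entry of $\mathbb{P}_i(t)$ is of the form $\partial_t^{j}g(t)|_{t=\theta}$ for a rational function $g$ appearing in the definition of $\mathbb{P}_i(t)$; in each case $g$ has its poles only at points of $\{\theta^{q^m} : m \geq 1\}$, which lie outside $D_q$ since $\inorm{\theta^{q^m}} = q^{q^m} > q$, so $g$ is holomorphic on $D_q$. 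Because $\inorm{\theta}=q$, the ultrametric inequality gives $\{\inorm{t}\leq q\} = \{\inorm{t-\theta}\leq q\}$, and Proposition \ref{P:CGM} then realizes $\partial_t^j g|_{t=\theta}$ as the $j$-th Taylor coefficient of $g$ at $\theta$ on a disc of radius $q$, so the standard non-Archimedean (Gauss-norm) estimate yields $\inorm{\partial_t^j g|_{t=\theta}} \leq q^{-j}\sup_{t\in D_q}\inorm{g(t)}$.

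The key input for bounding these sup norms is the observation that, for $t \in D_q$ and $m \geq 1$, the ultrametric inequality yields $\inorm{t-\theta^{q^m}} = q^{q^m} = \inorm{[m]}$. Applied term-wise to the shadowed-partition expansions defining $\tilde{F}_i(t), \tilde{T}_i(t), \Upsilon_i(t)$, each term has the same $D_q$-sup norm as the absolute value of its counterpart $\mathcal{C}_{(S_1,S_2)}$ at $\theta$, whence by ultrametricity
\[
\sup_{t\in D_q}\inorm{\Upsilon_i(t)} \leq \max_{(S_1,S_2) \in \mathcal{P}_2(i)} \inorm{\mathcal{C}_{(S_1,S_2)}},
\]
with the analogous bound for $\tilde{F}_i$. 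Combining this with $\sup_{t\in D_q}\inorm{\mathbb{L}_i(t)^n}=q^{nq(q^i-1)/(q-1)}$ and $\sup_{t\in D_q}\inorm{(t-\theta^{q^i})^{n+1-k}}=q^{(n+1-k)q^i}$ produces an explicit power-of-$q$ upper bound on every entry of $P_i$.

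The main obstacle will be the quantitative verification that this bound tends to $0$. A direct inspection of the shadowed partitions in $\mathcal{P}_2(i)$, using that a pair $\{k,k+1\} \subset \{0,\ldots,i-1\}$ can be absorbed into $S_2$ at the cost of a single denominator factor $[k+2]$ in place of the two factors $[k+1][k+2]$ from an all-$S_1$ partition, should show that $\max_{(S_1,S_2)} \inorm{\mathcal{C}_{(S_1,S_2)}} \leq q^{-c_i}$ with $c_i \sim q^{i+2}/(q^2-1)$ asymptotically. Together with the $q^{nq^{i+1}/(q-1)}$ growth of $\mathbb{L}_i^n$ and the at most $q^{(n+1)q^i}$ growth of $(t-\theta^{q^i})^{n+1-k}$, a short algebraic check then verifies that the resulting exponent of $q$ in the bound for $\inorm{P_i}$ is asymptotic to $-(n+1+nq)q^i/(q^2-1)$, which tends to $-\infty$. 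Hence $\inorm{P_i} \to 0$ and $\Log_{G_n}$ converges on $\mathfrak{D}_n$.
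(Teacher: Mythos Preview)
Your approach is correct and follows essentially the same route as the paper: both arguments use Proposition~\ref{P:2} to express the entries of $P_i$ as hyperderivatives $\partial_t^j g|_{t=\theta}$ of rational functions whose only poles lie at $\theta^{q^m}$ ($m\geq 1$), and then estimate these hyperderivatives to force $\inorm{P_i}\to 0$. The paper first treats the last two rows explicitly via Corollary~\ref{C:1} and the bound $\inorm{\gamma_i}<1$ from \cite[Lem.~4.1]{EP13}, then invokes a ``small calculation'' for the remaining rows; your Gauss-norm/Cauchy-estimate framework is precisely a clean way to carry out that small calculation uniformly over all entries, and your observation that $\inorm{t-\theta^{q^m}}$ is \emph{constant} on $D_q$ is the right replacement for the paper's appeal to \cite{EP13} (note that this constancy is also why writing ``$\sup_{D_q}\inorm{\mathbb{L}_i(t)^n}$'' is harmless even though $\mathbb{L}_i$ sits in the denominator).

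Two minor points. First, your shadowed-partition asymptotic $c_i\sim q^{i+2}/(q^2-1)$ is correct but unnecessary: since $a,b\in\mathbb{F}_q$ every numerator has norm $\leq 1$ and every denominator has at least one factor $[m]$ of norm $>1$, so already $\max_U\inorm{\mathcal{C}_U}\leq 1$, and the $-nq^i/(q-1)$ contribution from $\mathbb{L}_i(t)^n$ alone drives the exponent to $-\infty$. Second, your stated leading constant $-(n+1+nq)/(q^2-1)$ does not match what the ingredients actually give (one finds $-(n(q+1)+q^2)/(q^2-1)$ instead), but since only divergence to $-\infty$ is needed, this arithmetic slip does not affect the validity of the argument.
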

	\begin{proof} For $i\geq 1$, we first analyze the last two rows  of $P_i$ by using Corollary \ref{C:1}. Since $a,b\in \mathbb{F}_q$, by \cite[Lem. 4.1]{EP13}, we see that $\inorm{\gamma_{i}}<1$. For any $1\leq k \leq n+1$ we have
		\begin{equation}\label{E:eq18}
		\begin{split}
		\Big|\frac{[i]^{n+1-k}\gamma_{i}}{L_i^n}\Big|_{\infty}<	\Big|\frac{[i]^{n+1-k}}{L_i^n}\Big|_{\infty}&=q^{q^i(n+1-k)-n(q^{i+1}-q)/(q-1)}\\
		&=q^{q^{i}(-k+n+1-nq/(q-1))+nq/(q-1)}.
		\end{split}
		\end{equation}
		Since $b\in \mathbb{F}_q$, for $1\leq k \leq n$, we also have
		\begin{equation}\label{E:eq19}
		\begin{split}
		\Big|\frac{[i]^{n-k}b^{q^{i-1}}\gamma_{i-1}}{L_i^n}\Big|_{\infty}\leq 	\Big|\frac{[i]^{n-k}}{L_i^n}\Big|_{\infty}&=q^{q^i(n-k)-n(q^{i+1}-q)/(q-1)}\\
		&=q^{q^{i}(-k+n-nq/(q-1))+nq/(q-1)}.
		\end{split}
		\end{equation}
		Similar estimation can be also made for the elements $[i]^{n+1-k}F_i/L_i^n$ and $[i]^{n-k}b^{q^{i-1}}F_{i-1}/L_i^n$ by using \eqref{E:eq18} and \eqref{E:eq19} respectively. Thus by Corollary \ref{C:1}, we see that the norm of any element in one of the odd (resp. even) entries in $(2n)$-th or the last row of $P_i$ is bounded by the right hand side of \eqref{E:eq18} (resp. \eqref{E:eq19}). 
		By Proposition \ref{P:2} and the definition of $\mathbb{L}_i(t)$, we see that for $0\leq l,m \leq n$, we have
		\[
		P_{i,(2n+1-2l,2m+1)}=\partial_{t}^{l}\bigg(\frac{\Upsilon_{i}(t)}{(t-\theta^q)^n\dots (t-\theta^{q^{i-1}})^n(t-\theta^{q^{i}})^m}\bigg)\bigg|_{t=\theta},
		\] 
		and for $1\leq j \leq n$, we obtain 
		\[
		P_{i,(2n+1-2l,2j)}=\partial_{t}^{l}\bigg(\frac{b^{q^{i-1}}\Upsilon_{i-1}(t)}{(t-\theta^q)^n\dots (t-\theta^{q^{i-1}})^n(t-\theta^{q^{i}})^j}\bigg)\bigg|_{t=\theta}.
		\]  
		Moreover again by Proposition \ref{P:2}, for $0\leq s \leq n-1$ and $0\leq r \leq n$, we have
		\[
		P_{i,(2n-2s,2r+1)}=\partial_{t}^{s}\bigg(\frac{\tilde{F_{i}}(t)}{(t-\theta^q)^n\dots (t-\theta^{q^{i-1}})^n(t-\theta^{q^{i}})^r}\bigg)\bigg|_{t=\theta},
		\] 
		and for $1\leq j \leq n$, we obtain 
		\[
		P_{i,(2n-2s,2j)}=\partial_{t}^{s}\bigg(\frac{b^{q^{i-1}}\tilde{F}_{i-1}(t)}{(t-\theta^q)^n\dots (t-\theta^{q^{i-1}})^n(t-\theta^{q^{i}})^j}\bigg)\bigg|_{t=\theta}.
		\] 
		Thus, a small calculation implies that the norm of the elements in an odd entry (resp. even) of each row of $P_i$ is smaller than the bound obtained in the right hand side of \eqref{E:eq18} (resp. \eqref{E:eq19}). Now let $x$ be an element in $D$. Then the bound on the norm of $P_i$ implies that 
		\[
		\inorm{P_ix^{q^{i}}}\leq \max_{1\leq k \leq n}q^{q^{i}(-k+n+1-nq/(q-1))+nq/(q-1)}\to 0
		\]
		as $i\to \infty$. Since $x\in \mathfrak{D}_n$ is arbitrary, the function $\Log_{G_n}$ converges on $\mathfrak{D}_n$.
	\end{proof}
	\begin{remark}\label{R:coeff} Let $G_n=(\mathbb{G}_{a/K}^{2n+1},\phi_n)$ be the abelian $t$-module as in the statement of Theorem \ref{T:T1}. For a fixed choice of $(q-1)$-st root of $-b^{-1}$, set $\gamma:=(-b^{-1})^{1/(q-1)}$ and let $\tilde{G}_n=(\mathbb{G}_{a/K}^{2n+1},\tilde{\phi}_n)$ be the  $t$-module given by $\tilde{\phi}_n(\theta)=\gamma^{-1}\phi_n(\theta)\gamma$.  It is easy to check by using the functional equation \eqref{E:funceq} that $\Log_{\tilde{G}_n}=\gamma^{-1}\Log_{G_n}\gamma$. In other words, if $\tilde{P}_i$ is the $i$-th coefficient of $\Log_{\tilde{G}_n}$, then we have $\tilde{P}_i=(-1)^{i}b^{-i}P_i$ for all $i\geq 0$.	Therefore one can also obtain similar results for the logarithm series coefficients of $\tilde{G}_n$ and hence sees that the function  $\Log_{\tilde{G}_n}$ also converges on $\mathfrak{D}_n$.
	\end{remark}

	\section{Class and Unit modules} 
	We fix an abelian $t$-module $G_n$ which is constructed as the tensor product of the $n$-th tensor power of the Carlitz module and a Drinfeld $A$-module $\phi$ of rank 2 given by $\phi_{\theta}=\theta+a\tau +b\tau^2$ such that $a\in \mathbb{F}_q$ and $b\in \mathbb{F}_q^{\times}$. In this section, our aim is to prove some properties of the class and unit module of $G_n$. For more general description of class and unit  modules, we refer the reader to \cite{AnglesTavaresRibeiro}, \cite{Fang} and \cite{Taelman}.  
	
	For $1\leq i \leq 2n+1$, let $e_i\in \Mat_{(2n+1)\times 1}(\mathbb{F}_q)$ be such that the $i$-th coordinate of $e_i$ is 1 and the rest is equal to 0. 
	\begin{lemma}\label{L:l0}
		The $A$-module $\Lie(G_n)(A)$ is free of rank $2n+1$ generated by $e_i$ for $i=1,\dots,2n+1$.
	\end{lemma}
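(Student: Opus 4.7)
The plan is to unpack the twisted $A$-action on $\Lie(G_n)(A)$ and reduce the freeness claim to the solvability of a triangular system over $A$. Recall that $\partial_{\phi_n}(\theta) = \theta\,\Id_{2n+1} + N$, so the $A$-module structure on the underlying $\mathbb{F}_q$-vector space $A^{2n+1}$ is given by $\theta \cdot x = \theta x + N x$. Direct inspection of $N$ shows that $N e_k = e_{k-2}$ for $3 \le k \le 2n+1$ and $N e_k = 0$ for $k \in \{1,2\}$.

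Since $\theta\,\Id_{2n+1}$ and $N$ commute and $N^{n+1} = 0$, I would expand $(\theta\,\Id_{2n+1} + N)^k$ via the binomial theorem and recognise the hyperdifferential operators $\partial_\theta^j$ to obtain the key identity
\[
a \cdot e_i \;=\; \sum_{\substack{j \ge 0 \\ i - 2j \ge 1}} \partial_\theta^j(a)\, e_{i-2j}, \qquad a \in A,\ 1 \le i \le 2n+1.
\]
For any $x = \sum_{i=1}^{2n+1} x_i e_i \in A^{2n+1}$, equating the coefficients of each $e_k$ in the identity $\sum_i a_i \cdot e_i = x$ then yields the system
\[
\sum_{\substack{j \ge 0 \\ k + 2j \le 2n+1}} \partial_\theta^j(a_{k+2j}) \;=\; x_k, \qquad k = 1, \dots, 2n+1.
\]

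These equations decouple into an odd-indexed and an even-indexed subsystem according to the parity of $k$, and each subsystem is triangular. Starting from the top indices, where the relations force $a_{2n+1} = x_{2n+1}$ and $a_{2n} = x_{2n}$, I would then solve for $a_k$ recursively in terms of $x_k$ and hyperderivatives of the already-determined $a_{k+2}, a_{k+4}, \dots$, all of which remain in $A$. This procedure produces a unique tuple $(a_1, \dots, a_{2n+1}) \in A^{2n+1}$ representing $x$, which proves simultaneously that the $e_i$ span $\Lie(G_n)(A)$ and that they are $A$-linearly independent, hence form a free basis. No step presents a real obstacle; the only care required is in verifying the explicit formula for $a \cdot e_i$ and then tracking indices through the triangular inversion.
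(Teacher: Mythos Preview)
Your proposal is correct and follows essentially the same route as the paper: both arguments compute $\partial_{\phi_n}(a)\cdot e_i$ via the identity $\partial_{\phi_n}(a)=\sum_{j\ge 0}\partial_\theta^j(a)\,N^j$ (which is the content of Lemma~\ref{L:dpart}) and then invert the resulting triangular system recursively, starting from $a_{2n+1}$ and $a_{2n}$. The only cosmetic difference is that you treat the general equation $\sum_i a_i\cdot e_i = x$ at once, thereby handling spanning and linear independence simultaneously, whereas the paper first sets $x=0$ for independence and then remarks that the same display yields spanning.
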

	\begin{proof}
		Suppose that there exist elements $a_1,\dots,a_{2n+1}$ in $A$  such that
		\begin{equation}\label{E:eqproof0}
		\sum_{i\geq 0}\partial_{\phi_n}(a_i) e_i=[0,\dots,0]^{\tr}.
		\end{equation}
		By Lemma \ref{L:dpart}, the equality in \eqref{E:eqproof0} is equivalent to
		\begin{equation}\label{E:prooflinind}
		\sum_{i\geq 0}\partial_{\phi_n}(a_i)e_i=\begin{bmatrix}
		a_1+\partial_{\theta}(a_3)+\dots+\partial_{\theta}^{n}(a_{2n+1})\\
		a_2+\partial_{\theta}(a_4)+\dots+\partial_{\theta}^{n-1}(a_{2n})\\
		\vdots\\
		a_{2n-2}+\partial_{\theta}(a_{2n})\\
		a_{2n-1}+\partial_{\theta}(a_{2n+1})\\
		a_{2n}\\
		a_{2n+1}
		\end{bmatrix}=\begin{bmatrix}
		0\\
		0\\
		\vdots\\
		0\\
		0\\
		0\\
		0\\
		\end{bmatrix}.
		\end{equation}
		Thus we obtain that $a_i=0$ for $1\leq i \leq 2n+1$ recursively. This implies that the set $\{e_1,\dots,e_{2n+1}\}$ is $A$-linearly independent. Using the first equality in \eqref{E:prooflinind}, one can show that the same set also spans the $A$-module $\Lie(G_n)(A)$ and we leave the details to the reader.
	\end{proof}

	Let $v_{\infty}(\cdot)$ be the valuation corresponding to the norm $\inorm{\cdot}$ normalized so that $v_{\infty}(\theta)=-1$. Consider the $\mathbb{F}_q$-module 
	\[
	\mathfrak{m}:=\{x\in \Lie(G_n)(K_{\infty}) \ \ |  v_{\infty}(x)\geq 1\}.
	\] 
	We have the following decomposition of $\mathbb{F}_q$-modules:
	\begin{equation}\label{E:eq1}
	\Lie(G_n)(K_{\infty})=\Lie(G_n)(A)\oplus \mathfrak{m}.
	\end{equation}
	
	Recall that $\Log_{G_n}=\sum_{i\geq 0}P_i\tau^i$ is the logarithm series of $G_n$.
	\begin{proposition}\label{P:P3} For $1\leq i \leq 2n+1$, let $\lambda_i:=\Log_{G_n}(e_i)$. Then the set $\{\lambda_1,\dots,\lambda_{2n+1}\}$ is $A$-linearly independent in $\Lie(G_n)(K_{\infty})$. 
		
	\end{proposition}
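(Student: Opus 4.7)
The plan is to decompose $\lambda_i = e_i + \mu_i$, with $e_i \in \Lie(G_n)(A)$ the leading part and $\mu_i$ a small tail lying in the completion ideal $\mathfrak{m}$, and then contradict any alleged nontrivial $A$-linear relation by comparing $\infty$-norms on the two sides. First I would note that $|e_i|_\infty = 1$ and that $e_i$ has entries in $\mathbb{F}_q$, so $e_i^{(j)} = e_i$; applying Theorem \ref{T:T1} then gives the convergent decomposition
\[
\lambda_i = \Log_{G_n}(e_i) = e_i + \mu_i, \qquad \mu_i := \sum_{j \geq 1} P_j e_i \in \Lie(G_n)(K_\infty).
\]
The step that requires care is the claim $\mu_i \in \mathfrak{m}$, i.e.\ $v_\infty(\mu_i) \geq 1$. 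Inspecting the proof of Theorem \ref{T:T1}, the norm bounds \eqref{E:eq18} and \eqref{E:eq19} reach the value $1$ only in the boundary case $j=k=1$; however, the strict inequality $|\gamma_j|_\infty < 1$ (valid for all $j \geq 1$ because $a,b \in \mathbb{F}_q$, cf.\ \cite[Lem.~4.1]{EP13}), together with the analogous bound $|F_j|_\infty < 1$ (immediate from $a,b \in \mathbb{F}_q$ and the presence of at least one denominator factor $[i+1]$ of large $\infty$-norm in each summand of $F_j$), force every entry of every $P_j$ with $j \geq 1$ to have $\infty$-norm strictly less than $1$. Since the value group of $v_\infty$ on $K_\infty$ is $\mathbb{Z}$, strict inequality against $1$ upgrades automatically to $\leq 1/q$, giving $\mu_i \in \mathfrak{m}$.

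Next, suppose for contradiction that $a_1, \dots, a_{2n+1} \in A$ are not all zero and $\sum_i \partial_{\phi_n}(a_i)\cdot \lambda_i = 0$. Identifying $\partial_{\phi_n}(a_i) = d_n[a_i]$ via Lemma \ref{L:dpart} and splitting $\lambda_i = e_i + \mu_i$ yields
\[
\sum_{i=1}^{2n+1} d_n[a_i]\cdot e_i = -\sum_{i=1}^{2n+1} d_n[a_i]\cdot \mu_i.
\]
Set $M := \max_i |a_i|_\infty > 0$ and let $i_0$ be the \emph{largest} index with $|a_{i_0}|_\infty = M$. Direct inspection of the structure of $d_n[a_i]$ (as already used in the proof of Lemma \ref{L:l0}) shows that the $i_0$-th coordinate of the left-hand side equals $a_{i_0} + \sum_{l\geq 1,\, i_0+2l \leq 2n+1} \partial_\theta^l(a_{i_0+2l})$; by the maximality of $i_0$ and the bound $|\partial_\theta^l(a)|_\infty \leq |a|_\infty/q^l$, each term with $l \geq 1$ has $\infty$-norm strictly less than $M$, so the ultrametric inequality gives $|\mathrm{LHS}|_\infty = M$.

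For the right-hand side, every entry of $d_n[a_i]$ is a hyperderivative of $a_i$ and so has $\infty$-norm at most $|a_i|_\infty$; combined with $|\mu_i|_\infty \leq 1/q$ this forces $|d_n[a_i]\cdot \mu_i|_\infty \leq |a_i|_\infty/q$, hence $|\mathrm{RHS}|_\infty \leq M/q < M$. The resulting contradiction forces all $a_i = 0$, proving the $A$-linear independence. The only genuinely nontrivial step is the sharpening $\mu_i \in \mathfrak{m}$, which amounts to upgrading the convergence estimates in the proof of Theorem \ref{T:T1} to strict inequality; everything else is routine bookkeeping with $\infty$-norms.
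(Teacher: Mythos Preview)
Your proof is correct and rests on the same key observation as the paper's: each $\lambda_i$ decomposes as $e_i + \mu_i$ with $\mu_i = \sum_{j\ge 1} P_j e_i \in \mathfrak{m}$, and this ``integral plus small'' splitting is incompatible with any nontrivial $A$-relation. The paper also invokes precisely this fact (citing Proposition~\ref{P:2} and a valuation calculation), and your more explicit justification via the strict bounds $|\gamma_j|_\infty, |F_j|_\infty < 1$ and the discreteness of the value group is a welcome clarification rather than a new ingredient.

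Where you differ is in the endgame. The paper uses the direct sum decomposition $\Lie(G_n)(K_\infty) = \Lie(G_n)(A) \oplus \mathfrak{m}$ and argues inductively on $\deg_\theta$: dividing through by $\theta^T$ with $T=\max_i \deg_\theta(a_i)$ forces the leading coefficients $a_{iT}$ to vanish, then one repeats. You instead isolate a single coordinate (the $i_0$-th, with $i_0$ the largest index realizing the maximal norm) and obtain the contradiction in one ultrametric step, $|\text{LHS}|_\infty = M$ versus $|\text{RHS}|_\infty \le M/q$. Your route is a bit more streamlined and avoids the induction; the paper's route makes the role of the $\mathbb{F}_q$-direct sum \eqref{E:eq1} more visible. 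Both are equally valid executions of the same idea.
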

	\begin{proof}
		By Theorem \ref{T:T1}, we see that $e_i$ is in the domain of convergence of $\Log_{G_n}$ for $1\leq i \leq 2n+1$. Assume to the contrary that  there exist $a_1,\dots a_{2n+1}\in A$  not all zero satisfying
		\begin{equation}\label{E:eqproof}
		\sum_{i=1}^{2n+1}\partial_{\phi_n}(a_i) \lambda_i=[0,\dots,0]^{\tr}
		\end{equation}
		and let $T:=\max_{1\leq i \leq 2n+1}\{\deg_{\theta}(a_i)\}$. By using Proposition \ref{P:2} and a simple calculation on the valuation of the coefficients of $\Log_{G_n}$, we see that for any $k\geq 1$ and $j\in \{1,\dots,2n+1\}$, $P_k e_j\in \mathfrak{m}$. Since all entries of $P_k$ for $k\geq 1$ has  valuation bigger than 0, if we set $g_i:=\sum_{k=1}^{\infty}P_ke_i$, then we see that $g_i\in \mathfrak{m}$. Now dividing both sides of \eqref{E:eqproof} by $\theta^T$ and using the fact that $P_0=\Id_{2n+1}$, Lemma \ref{L:dpart} yields
		\begin{equation}\label{E2:1}
		\sum_{i=1}^{2n+1}\begin{bsmallmatrix}
		a_{iT}+ a_i^{\prime}&0&\theta^{-T}\partial_{\theta}(a_i)&\dots &0&\theta^{-T}\partial_{\theta}^{n}(a_i)\\
		&a_{iT}+ a_i^{\prime}&0&\theta^{-T}\partial_{\theta}(a_i)& & \\
		& & \ddots &\ddots  &\ddots  & \\
		& &  &  &  & \\
		& &  & a_{iT}+ a_i^{\prime}&0&\theta^{-T}\partial_{\theta}(a_i)\\
		& &  & & a_{iT}+ a_i^{\prime}&0\\
		& & & & & a_{iT}+ a_i^{\prime}
		\end{bsmallmatrix}(e_i+g_i)=\begin{bmatrix}
		0 \\
		\vdots \\
		\vdots \\
		\vdots \\
		0
		\end{bmatrix}
		\end{equation}
		where $a_{iT}$'s are the $\theta^{T}$-th coefficient of $a_i$'s some of which may possibly be zero and $a_i^{\prime}=a_i/\theta^T-a_{iT}$. We also note that  $v_{\infty}(a_i^{\prime})\geq 1$ for $1\leq i \leq 2n+1$. Thus by comparing both sides of \eqref{E2:1}, we see that 
		\[
		\sum_{i=1}^{2n+1}a_{iT}e_i+g_{*}=[0,\dots,0]^{\tr}
		\]
		for some $g_{*}\in \mathfrak{m}$. By the decomposition of $\Lie(G_n)(K_{\infty})$ in \eqref{E:eq1} we have that 
		\[
		\sum_{i=1}^{2n+1}a_{iT}e_i=[0,\dots,0]^{\tr}.
		\]
		But it is only possible if $a_{iT}=0$ for all $i$. Using the similar argument for the remaining coefficients of $a_i$, inductively, we can show that $a_i=0$ for all $i$. But this is a contradiction with the assumption on $a_i$'s. Thus the set $\{\lambda_1,\dots,\lambda_{2n+1}\}$ is $A$-linearly independent.
	\end{proof}
	Let $G$ be an abelian $t$-module and consider the exponential function $\Exp_{G}:\Lie(G)(\CC_{\infty})\to G(\CC_{\infty})$ of $G$. The class module $H(G/A)$ of $G$ is the $A$-module given by the quotient
	\[
	H(G/A):=\frac{G(K_{\infty})}{\Exp_{G}(\Lie(G)(K_{\infty}))+G(A)}.
	\]
	We prove the following proposition.
	\begin{proposition}\label{P:p1} For any $n \geq 1$, we have 
		\[
		H(G_n/A)=\{ 0\}.
		\]
		\begin{proof}
			By Theorem \ref{T:T1}, we have that the set $\mathfrak{m}$ is in the domain of convergence of $\Log_{G_n}$. Since $\Log_{G_n}$ is the formal inverse of $\Exp_{G_n}$, the image of $\Exp_{G_n}$ contains $\mathfrak{m}$. Thus by \eqref{E:eq1} we have that 
			\[
			\Exp_{G_n}(\Lie(G_n)(K_{\infty}))+G_n(A)\supseteq G_n(K_{\infty})
			\]
			which implies that $H(G_n/A)=\{ 0\}$.
		\end{proof}
		
	\end{proposition}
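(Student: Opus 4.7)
The plan is to show that the denominator in the definition of $H(G_n/A)$ already exhausts $G_n(K_\infty)$, by using the convergence of $\Log_{G_n}$ on a ``large'' disc together with the $\mathbb{F}_q$-linear decomposition \eqref{E:eq1}. The key observation is that the submodule $\mathfrak{m}$ of elements of positive $v_\infty$-valuation in $\Lie(G_n)(K_\infty)$ is contained in the convergence disc $\mathfrak{D}_n$ of Theorem \ref{T:T1}, because $v_\infty(x)\geq 1$ forces $\inorm{x}\leq q^{-1}<1$.

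First, I would note that, since $\Log_{G_n}$ is the formal inverse of $\Exp_{G_n}$ in $\Mat_{2n+1}(K)[[\tau]]$ and $\Exp_{G_n}$ converges everywhere on $\Lie(G_n)(\CC_{\infty})$, the identity $\Exp_{G_n}\circ\Log_{G_n}=\mathrm{id}$ holds as an identity of functions on any set on which $\Log_{G_n}$ converges. Applying this on $\mathfrak{m}\subseteq\mathfrak{D}_n$ shows that every $y\in\mathfrak{m}$ can be written as $y=\Exp_{G_n}(\Log_{G_n}(y))$, where $\Log_{G_n}(y)\in\Lie(G_n)(K_\infty)$. Hence
\[
\mathfrak{m}\subseteq\Exp_{G_n}\bigl(\Lie(G_n)(K_\infty)\bigr).
\]

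Next, I would invoke the decomposition \eqref{E:eq1}, which as $\mathbb{F}_q$-vector spaces identifies $G_n(K_\infty)$ with $G_n(A)\oplus\mathfrak{m}$ (the underlying additive sets of $G_n(\cdot)$ and $\Lie(G_n)(\cdot)$ coincide; only the $A$-action differs). Any $z\in G_n(K_\infty)$ therefore splits as $z=z_A+z_\mathfrak{m}$ with $z_A\in G_n(A)$ and $z_\mathfrak{m}\in\mathfrak{m}$, and by the previous step $z_\mathfrak{m}\in\Exp_{G_n}(\Lie(G_n)(K_\infty))$. This proves
\[
G_n(K_\infty)=\Exp_{G_n}\bigl(\Lie(G_n)(K_\infty)\bigr)+G_n(A),
\]
so the quotient $H(G_n/A)$ vanishes.

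There is no serious obstacle beyond invoking Theorem \ref{T:T1}: the only thing one must be careful about is matching the two decompositions on the ``Lie side'' and the ``$G_n$ side,'' but this is immediate because both decompositions are statements about the underlying $\mathbb{F}_q$-vector space $K_\infty^{2n+1}$. The genuinely nontrivial content, namely the fact that $\Log_{G_n}$ converges on the whole unit disc $\mathfrak{D}_n\supseteq\mathfrak{m}$, has already been established in \S3 via the bounds on $\inorm{P_i}$ derived from Corollary \ref{C:1} and Proposition \ref{P:2}.
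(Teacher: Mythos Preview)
Your proposal is correct and follows essentially the same argument as the paper's own proof: use Theorem~\ref{T:T1} to see that $\mathfrak{m}\subseteq\mathfrak{D}_n$ lies in the convergence domain of $\Log_{G_n}$, deduce $\mathfrak{m}\subseteq\Exp_{G_n}(\Lie(G_n)(K_\infty))$ from the formal inverse relation, and then invoke the decomposition~\eqref{E:eq1} to conclude. The only difference is that you spell out a few details (why $\mathfrak{m}\subseteq\mathfrak{D}_n$, and why the $\mathbb{F}_q$-decomposition transfers between $\Lie(G_n)(K_\infty)$ and $G_n(K_\infty)$) that the paper leaves implicit.
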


	\begin{definition} Let $V$ be a finite dimensional $K_{\infty}$-vector space. We say that an $A$-module $M\subset V$ is an $A$-lattice in $V$ if it is free and finitely generated over $A$ such that the map $M\otimes_{A}K_{\infty}\to V$ is an isomorphism.
	\end{definition}
	\begin{remark}\label{R:rank} It is important to point out that by \cite[Lem. 1]{AnglesTavaresRibeiro}, an $A$-lattice in $V$ is a free $A$-module of finite rank $r=\dim_{K_{\infty}}(V)$.
		
	\end{remark}

	We now continue with the theory of invertible $A$-lattices introduced in \cite{Deb}.
	\begin{definition}\cite[Def. 2.19]{Deb} \label{D:defff}
		\begin{itemize}
			\item [(i)] An invertible $A$-lattice in $K_{\infty}$ is a tuple $(J,\alpha)$ consisting of a finitely generated and locally free of rank one $A$-module  $J$ and an isomorphism $
			\alpha:J\otimes_{A} K_{\infty}\to K_{\infty}
			$
			of $K_{\infty}$-modules.
			\item[(ii)] Let $\Id_{K_{\infty}}$ be the identity map on $K_{\infty}$. We say $(J_1,\alpha_1)$ and $(J_2,\alpha_2)$ are equivalent whenever there exists an isomorphism $g:J_1\to J_2$ of $A$-modules satisfying 
			\[
			\alpha_2\circ(g\otimes \Id_{K_{\infty}})=\alpha_1
			\]
			where $g\otimes \Id_{K_{\infty}}:J_1\otimes_{A}K_{\infty}\to J_2\otimes_{A}K_{\infty}$ is the map induced by $g$.
		\end{itemize}
	\end{definition}
	One can obtain that the relation between invertible $A$-lattices stated in Definition \ref{D:defff}(ii) is an equivalence relation and we denote the set of equivalence classes of invertible $A$-lattices in $K_{\infty}$ by $\Pic(A,K_{\infty})$.

	Given two finitely generated locally free $A$-modules $\tilde{J}_1$ and $\tilde{J}_2$, we can construct an invertible $A$-lattice $J$ as follows: Let $d_1$ and $d_2$ be the rank of $\tilde{J}_1$ and $\tilde{J}_2$ over $A$ and  $\alpha:\tilde{J}_1\otimes_{A}K_{\infty}\to \tilde{J}_2\otimes_{A}K_{\infty}$ be an isomorphism of $K_{\infty}$-modules. For $i=1,2$, we define $\det_{A}(\tilde{J}_i)$ to be the $d_i$-th exterior power $\wedge^{d_i}\tilde{J}_i$ of $\tilde{J}_i$. Since $A$ is a Dedekind domain, we see that $\Hom_{A}(\det_{A}(\tilde{J}_1),\det_{A}(\tilde{J}_2))$ is a finitely generated and locally free $A$-module of rank one. Moreover the tuple $J:=(\Hom_{A}(\det_{A}(\tilde{J}_1),\det_{A}(\tilde{J}_2)),\tilde{\alpha})$ is an element of $\Pic(A,K_{\infty})$ where \[
	\tilde{\alpha}:\Hom_{A}(\det_{A}(\tilde{J}_1),\det_{A}(\tilde{J}_2))\otimes_{A}K_{\infty}\to K_{\infty}
	\]
	is the isomorphism induced by $\alpha$. 
	\begin{proposition} \cite[Prop. 2.38]{Deb}\label{P2:2}
		There exists a unique homomorphism $v:\Pic(A,K_{\infty})\to \mathbb{Q}$ whose composition with $K_{\infty}^{\times} \to \Pic(A,K_{\infty})$ is the valuation $v_{\infty}$.
	\end{proposition}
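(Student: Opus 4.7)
The plan is to exploit the principal ideal domain structure of $A = \mathbb{F}_q[\theta]$, which forces $\Pic(A) = 0$ and hence makes every finitely generated locally free $A$-module of rank one free. Consequently, for any representative $(J, \alpha)$ of a class in $\Pic(A, K_{\infty})$, I can fix an $A$-basis $j \in J$, and the whole data of $\alpha$ collapses to a single nonzero scalar $x := \alpha(j \otimes 1) \in K_{\infty}^{\times}$. In particular the natural map $K_{\infty}^{\times} \to \Pic(A, K_{\infty})$ sending $x$ to the class of $(A, \mu_{x})$, where $\mu_{x}$ denotes multiplication by $x$, is surjective; this will handle uniqueness at the end.

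For existence, I would set $v([(J, \alpha)]) := v_{\infty}(x)$ and verify well-definedness against the two sources of ambiguity: the choice of basis $j$, and the choice of representative $(J, \alpha)$ of the equivalence class. For the first, any two $A$-bases of $J$ differ by a unit in $A^{\times} = \mathbb{F}_q^{\times}$, whose $v_{\infty}$-valuation vanishes. For the second, the compatibility $\alpha_2 \circ (g \otimes \Id_{K_{\infty}}) = \alpha_1$ in Definition \ref{D:defff}(ii) forces the two associated scalars $x_1, x_2$ to agree modulo $\mathbb{F}_q^{\times}$, since the isomorphism $g$ of $A$-modules sends any basis of $J_1$ to a unit multiple of any basis of $J_2$. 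Hence $v_{\infty}(x_1) = v_{\infty}(x_2)$, so $v$ is well-defined on equivalence classes.

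The homomorphism property is routine: if $j_i$ is a basis of $J_i$ with $\alpha_i(j_i \otimes 1) = x_i$ for $i = 1,2$, then $j_1 \otimes j_2$ is a basis of $J_1 \otimes_A J_2$ which maps to $x_1 x_2$ under the induced isomorphism, and additivity of $v_{\infty}$ does the rest. Compatibility with $K_{\infty}^{\times} \to \Pic(A, K_{\infty})$ holds tautologically: $(A, \mu_{x})$ with respect to the basis $1 \in A$ yields scalar $x$, so $v([(A, \mu_{x})]) = v_{\infty}(x)$. Finally, uniqueness is immediate from the surjectivity of $K_{\infty}^{\times} \to \Pic(A, K_{\infty})$ noted above, since any homomorphism into $\mathbb{Q}$ whose composition with a surjection is prescribed is itself determined. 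Since $A$ is a PID I do not anticipate a serious obstacle; the argument reduces to bookkeeping of bases, and indeed in this specific setting the image of $v$ lies in $\mathbb{Z} \subset \mathbb{Q}$.
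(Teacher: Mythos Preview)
Your argument is correct. Note, however, that the paper does not supply its own proof of this proposition: it is quoted directly from Debry's thesis \cite[Prop.~2.38]{Deb} and used as a black box. Debry's formulation is phrased for a general Dedekind ring $A$, where $\Pic(A)$ need not vanish and the homomorphism $v$ genuinely takes values in $\mathbb{Q}$ rather than $\mathbb{Z}$. Your approach exploits the special feature of the paper's setting, namely that $A=\mathbb{F}_q[\theta]$ is a principal ideal domain, to trivialize $\Pic(A)$ and reduce everything to a choice of basis; this is entirely adequate for the paper's purposes and, as you observe, shows that here $v$ actually lands in $\mathbb{Z}$. The trade-off is that Debry's argument presumably handles the non-principal case via a local computation or a d\'evissage through the exact sequence $1\to A^{\times}\to K_{\infty}^{\times}\to \Pic(A,K_{\infty})\to \Pic(A)\to 0$, whereas yours is shorter but does not generalize beyond PIDs.
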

	We call an element $g=\sum_{j\leq j_0}c_j\theta^{j}\in K_{\infty}^{\times}$ monic if the leading coefficient $c_{j_0}\in \mathbb{F}_q^{\times}$ is equal to 1. The monic generator of the $A$-module $\Hom_{A}(\det_{A}(\tilde{J}_1),\det_{A}(\tilde{J}_2))$ is denoted by $[\tilde{J}_1:\tilde{J}_2]_{A}$. Note that Proposition \ref{P2:2} actually implies that 
	\[
	v(\Hom_{A}(\det_{A}(\tilde{J}_1),\det_{A}(\tilde{J}_2)),\tilde{\alpha})=v_{\infty}([\tilde{J}_1:\tilde{J}_2]_{A}).
	\]
	
	We continue with an observation due to Angl\`{e}s and Tavares Ribeiro \cite[Sec. 2]{AnglesTavaresRibeiro}. Let $V_1^{\prime}$ and $V_2^{\prime}$ be $A$-lattices in a finite dimensional  $K_{\infty}$-vector subspace $V^{\prime}$ in $V$ defined by $V_i^{\prime}:=V_i\cap V^{\prime}$ for $i=1,2$. Then $V_1/V_1^{\prime}$ and $V_2/V_2^{\prime}$ are $A$-lattices in $V/V^{\prime}$ with the property that 
	\begin{equation}\label{E:proppp}
	\Big[V_1/V_1^{\prime}:V_2/V_2^{\prime}\Big]_{A}=\frac{[V_1:V_2]_{A}}{[V_1^{\prime}:V_2^{\prime}]_{A}}.
	\end{equation}

	Using the homomorphism $v:\Pic(A,K_{\infty})\to \mathbb{Q}$, Debry also proved the following.
	\begin{lemma}\cite[Cor. 2.40]{Deb} \label{P2:P2}
		Let $\Lambda\subset \Lambda^{\prime}$ be two finitely generated locally free $A$-modules of the same rank. Then we have
		\[
		v_{\infty}([\Lambda^{\prime}:\Lambda]_{A})=-\dim_{\mathbb{F}_q}(\Lambda^{\prime}/\Lambda).
		\]
	\end{lemma}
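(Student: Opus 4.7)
The plan is to reduce to Smith normal form and read off both sides explicitly. The crucial observation is that $A=\mathbb{F}_q[\theta]$ is a principal ideal domain, so any finitely generated locally free $A$-module is in fact free. In particular, under the hypotheses of the lemma, both $\Lambda$ and $\Lambda'$ are free $A$-modules of the same rank, say $r$.

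First I would invoke the elementary divisor theorem over the PID $A$ to obtain a basis $\{e_1,\dots,e_r\}$ of $\Lambda'$ and non-zero elements $d_1,\dots,d_r\in A$ such that $\{d_1 e_1,\dots,d_r e_r\}$ is a basis of $\Lambda$. From the resulting decomposition
\[
\Lambda'/\Lambda \;\cong\; \bigoplus_{i=1}^{r} A/(d_i)
\]
and the fact that $\dim_{\mathbb{F}_q} A/(d)=\deg_\theta d$ for every non-zero $d\in A$, I immediately read off
\[
\dim_{\mathbb{F}_q}(\Lambda'/\Lambda)=\sum_{i=1}^{r}\deg_\theta d_i = \deg_\theta(d_1\cdots d_r).
\]

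Next I would unwind the definition of $[\Lambda':\Lambda]_A$. The inclusion $\Lambda\subset \Lambda'$ induces an isomorphism $\Lambda\otimes_A K_\infty \cong \Lambda'\otimes_A K_\infty$, so both $\det_A(\Lambda)$ and $\det_A(\Lambda')$ sit inside the same one-dimensional $K_\infty$-vector space $V:=\det_A(\Lambda')\otimes_A K_\infty$. With respect to the chosen bases I have
\[
\det_A(\Lambda') = A\cdot(e_1\wedge\cdots\wedge e_r),\qquad \det_A(\Lambda)= A\cdot(d_1 e_1\wedge\cdots\wedge d_r e_r)=(d_1\cdots d_r)\,\det_A(\Lambda').
\]
An element of $\Hom_A(\det_A(\Lambda'),\det_A(\Lambda))$ extends uniquely to a $K_\infty$-linear endomorphism of $V$, i.e.\ to multiplication by some $\lambda\in K_\infty$, and lies in the Hom module precisely when $\lambda\cdot \det_A(\Lambda')\subset \det_A(\Lambda)$. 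Hence, as a fractional ideal inside $K_\infty$,
\[
\Hom_A(\det_A(\Lambda'),\det_A(\Lambda)) = (d_1\cdots d_r)\,A.
\]
Its monic generator is the monic scalar multiple of $d_1\cdots d_r$, so $v_\infty([\Lambda':\Lambda]_A)=-\deg_\theta(d_1\cdots d_r)$. Combining with the dimension count completes the proof.

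The argument is essentially bookkeeping, and I do not expect any serious obstacle: the only subtle point is correctly identifying the rank-one $A$-module $\Hom_A(\det_A(\Lambda'),\det_A(\Lambda))$ with a fractional ideal of $K_\infty$ via the isomorphism $\tilde\alpha$ induced by the inclusion, and then recalling that the valuation $v_\infty$ of a monic polynomial of degree $d$ is $-d$. The passage from the local statement (where ``locally free of rank one'' is the correct formulation) to the global trivialization used above is precisely what the PID hypothesis buys us, and would be the step most deserving of care if one wanted to generalize beyond $A=\mathbb{F}_q[\theta]$.
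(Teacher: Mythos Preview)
The paper does not supply its own proof of this lemma; it is quoted directly from Debry's thesis \cite[Cor.~2.40]{Deb}. Your argument is correct and is the standard one over a PID: after invoking the elementary divisor theorem to obtain aligned bases $\{e_i\}$ of $\Lambda'$ and $\{d_ie_i\}$ of $\Lambda$, both sides of the claimed identity compute to $\deg_\theta(d_1\cdots d_r)$ (with the appropriate sign), and your identification of $\Hom_A(\det_A(\Lambda'),\det_A(\Lambda))$ with the fractional ideal $(d_1\cdots d_r)A$ under $\tilde\alpha$ is exactly right. The only caveat, which you already note, is that Debry works over a general Dedekind domain where ``locally free'' need not imply ``free''; your simplification to the PID case $A=\mathbb{F}_q[\theta]$ is harmless here since that is the only ring the paper uses.
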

	
	\begin{definition}\label{D:unit} 
		Assume that $G$ is an abelian $t$-module. We define the unit module $U(G/A)$ corresponding to $G$ by
		\[
		U(G/A):=\{x\in \Lie(G)(K_{\infty}) \ \ | \ \ \Exp_{G}(x)\in \Lie(G)(A)\}\subset \Lie(G)(K_{\infty}).
		\]
	\end{definition}
	By \cite[Thm. 1.10]{Fang}, we know that $\Lie(G)(A)$ and $U(G/A)$ are $A$-lattices in $\Lie(G)(K_{\infty})$. Since $A$ is a Dedekind domain, $\Lie(G)(A)$ and $U(G/A)$ are also locally free $A$-modules. 
	
	The following proposition is useful to determine the generators of the unit module $U(G_n/A)$.
	\begin{proposition}[{cf. \cite[Prop. 4.29]{Deb}}]\label{P:p2} Let $\Lambda$ be a finitely generated locally free  $A$-submodule of $U(G/A)$ of the same rank as $\Lie(G)(A)$. 
		If $v_{\infty}([\Lie(G)(A):\Lambda]_{A})=0$ and $H(G/A)=\{ 0\}$, then $\Lambda=U(G/A)$.
	\end{proposition}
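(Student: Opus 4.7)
The plan is to use multiplicativity of the valuation of Debry's monic index $[\,\cdot\,:\,\cdot\,]_{A}$ through a chain $\Lambda \subseteq U(G/A)$ together with $\Lie(G)(A)$, all seen as $A$-lattices in the same finite-dimensional $K_{\infty}$-vector space $\Lie(G)(K_{\infty})$. By \cite[Thm.~1.10]{Fang}, $\Lie(G)(A)$ and $U(G/A)$ are both $A$-lattices there, and the rank hypothesis on $\Lambda$ places it in the same class. Composing the induced isomorphisms of determinants yields the additive identity
\[
v_{\infty}\!\bigl([\Lie(G)(A):\Lambda]_{A}\bigr) = v_{\infty}\!\bigl([\Lie(G)(A):U(G/A)]_{A}\bigr) + v_{\infty}\!\bigl([U(G/A):\Lambda]_{A}\bigr).
\]

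The second summand on the right is immediate from the inclusion $\Lambda \subseteq U(G/A)$ of equal-rank finitely generated locally free $A$-modules: the cited \cite[Cor.~2.40]{Deb} gives $v_{\infty}([U(G/A):\Lambda]_{A}) = -\dim_{\mathbb{F}_q}(U(G/A)/\Lambda) \le 0$, with equality if and only if $\Lambda = U(G/A)$. For the first summand, the hypothesis $H(G/A)=\{0\}$ must enter through the Taelman-style class number formula for abelian $t$-modules (in the form developed by Fang and by Angl\`es--Tavares Ribeiro), which produces
\[
v_{\infty}\!\bigl([\Lie(G)(A):U(G/A)]_{A}\bigr) = v_{\infty}\!\bigl(L(G/A)\bigr) = 0,
\]
the last equality following from $L(G/A)\in 1+\tfrac{1}{\theta}\mathbb{F}_q[[\tfrac{1}{\theta}]]$.

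Substituting these two identifications together with the hypothesis $v_{\infty}([\Lie(G)(A):\Lambda]_{A})=0$ into the multiplicativity identity forces $\dim_{\mathbb{F}_q}(U(G/A)/\Lambda)=0$, and hence $\Lambda=U(G/A)$. The main obstacle is the middle step: invoking the class number formula in the precise form needed and verifying that the abelian $t$-module in question falls within its scope. Since in this paper the module of interest is $G_n$, built by tensoring the Carlitz $n$-th power with a Drinfeld module of rank~2 defined over $\mathbb{F}_q$, this amounts to matching hypotheses rather than opening a new front, and the argument mirrors the Carlitz case of \cite[Prop.~4.29]{Deb}.
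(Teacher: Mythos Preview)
Your proposal is correct and follows essentially the same route as the paper: multiplicativity of the index $[\,\cdot:\cdot\,]_A$ along the chain $\Lambda\subseteq U(G/A)$ relative to $\Lie(G)(A)$ (the paper cites \cite[Lem.~2.23]{Deb} for this), the class number formula \cite[Thm.~1.10]{Fang} together with $H(G/A)=\{0\}$ and $v_{\infty}(L(G/A))=0$ to kill the term $v_{\infty}([\Lie(G)(A):U(G/A)]_{A})$, and finally \cite[Cor.~2.40]{Deb} to conclude $\Lambda=U(G/A)$. The only cosmetic difference is that you phrase the last step as forcing $\dim_{\mathbb{F}_q}(U(G/A)/\Lambda)=0$, whereas the paper simply invokes Lemma~\ref{P2:P2} after deducing $v_{\infty}([U(G/A):\Lambda]_A)=0$.
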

	\begin{proof} We first note that the inclusion of $\Lambda$ in $\Lie(G)(K_{\infty})$ induces an isomorphism 
		\[
		\iota:\Lambda \otimes_{A} K_{\infty} \to \Lie(G)(K_{\infty})\otimes_{A} K_{\infty}
		\]
		and hence the tuple  $(\Hom_{A}(\det_{A}(\Lie(G)(A)),\det_{A}(\Lambda)),\tilde{\iota})$ is in $\Pic(A,K_{\infty})$ where 
		\[
		\tilde{\iota}:\Hom_{A}(\det_{A}(\Lie(G)(A)),\det_{A}(\Lambda))\otimes_{A}K_{\infty}\to K_{\infty}
		\]
		is the isomorphism induced by $\iota$.  Recall the definition of $L(G/A)$ from \S2.7. Note that $v_{\infty}(L(G/A))=0$. Thus by the assumption on $H(G/A)$ and \cite[Thm. 1.10]{Fang}, we have $v_{\infty}([\Lie(G)(A):U(G/A)]_{A})=0$. Moreover, by \cite[Lem. 2.23]{Deb}, we have the equality that 
		\begin{equation}\label{E:val}
		[\Lie(G)(A):\Lambda]_{A}=[\Lie(G)(A):U(G/A)]_{A}[U(G/A):\Lambda]_{A}.
		\end{equation}
		Note also that the tuple $(\Hom_{A}(\det_{A}(U(G/A)),\det_{A}(\Lambda)),\tilde{\alpha})$ is an element in $\Pic(A,K_{\infty})$ where 
		\[
		\tilde{\alpha}:\Hom_{A}(\det_{A}(U(G/A)),\det_{A}(\Lambda))\otimes_{A}K_{\infty}\to K_{\infty}
		\]
		is the isomorphism induced by 
		\[
		\alpha:U(G/A)\otimes_{A}K_{\infty}\to \Lambda\otimes_{A}K_{\infty}.
		\]
		Calculating the valuation of both sides of \eqref{E:val}, we see that 
		\[
		v_{\infty}([\Lie(G)(A):\Lambda]_{A})=v_{\infty}([U(G/A):\Lambda]_{A}).
		\]
		Thus by the assumption, we get $v_{\infty}([U(G/A):\Lambda]_{A})=0$. The proposition now follows from Lemma \ref{P2:P2}.
	\end{proof}
	Observe that by  \cite[Thm. 1.10]{Fang} and Proposition \ref{P:p1}, we obtain 
	\begin{equation}\label{E:cnf}
	L(G_n/A)=[\Lie(G_n)(A):U(G_n/A)]_{A}.
	\end{equation}
	
	We set 
	\[
	\Lambda:=\bigoplus_{i=1}^{2n+1} A\cdot  \lambda_i\subset \Lie(G_n)(K_{\infty}),
	\]
	to be the $A$-module generated by $\lambda_i$'s, where $\lambda_i$ is as in Proposition \ref{P:P3} whose $A$-module structure is induced by $\partial_{\phi_n}$. By Proposition \ref{P:P3}, we observe that $\Lambda$ is an $A$-submodule of $U(G_n/A)$ which is free of rank $2n+1$ and therefore has the same rank as $\Lie(G_n)(A)$ by Lemma \ref{L:l0}. It is also locally free as it is a torsion free module over the Dedekind domain $A$. Moreover one can note that the element $[\Lie(G_n)(A):\Lambda]_{A}\in K_{\infty}^{\times}$ can be given by the determinant of the matrix $\Pi\in \Mat_{2n+1}(K_{\infty})$ whose $i$-th column having the coordinates of $\lambda_i=\Log_{G_n}(e_i)$. Since the idea of the proof of Theorem \ref{T:T1} implies that the entries of $P_i$ for $i\geq 1$ has valuation bigger than 0, we obtain  $[\Lie(G_n)(A):\Lambda]_{A}=\det(\Pi)=1+m^{\prime}$ where $m^{\prime}\in \mathfrak{m}$. Therefore  we have $v_{\infty}([\Lie(G_n)(A):\Lambda]_{A})=v_{\infty}(1+m^{\prime})=0$. Thus using Proposition \ref{P:p2}, we conclude the following.
	\begin{theorem}\label{T:unit} 
		\begin{itemize}
			\item[(i)] 	We have
			\[
			U(G_n/A)=\bigoplus_{i=1}^{2n+1} A\cdot \lambda_i\subset\Lie(G_n)(K_{\infty})
			\]
			where $\lambda_i=\Log_{G_n}(e_i)$ for all $1\leq i\leq 2n+1$.
			\item[(ii)] Let $\tilde{G}_n=(\mathbb{G}_{a/K}^{2n+1},\tilde{\phi}_n)$ be the $t$-module defined as in Remark \ref{R:coeff}. Then $\tilde{G}_n$ is an abelian $t$-module. Furthermore we have
			\[
			U(\tilde{G}_n/A)=\bigoplus_{i=1}^{2n+1}A\cdot \tilde{\lambda}_i\subset\Lie(\tilde{G}_n)(K_{\infty})
			\]
			where $\tilde{\lambda}_i=\Log_{\tilde{G}_n}(e_i)$ for all $1\leq i\leq 2n+1$.
		\end{itemize}	
	\end{theorem}
	\begin{proof} The first part follows from the previous discussion. For the second part, we first show that $\tilde{G}_n$ is an abelian $t$-module. Let $M^{\prime}$ be the Taelman $t$-motive defined as $M^{\prime}:=M_\phi\otimes\textbf{C}^{\otimes n+1}\otimes\det(M_\phi)^{\vee}$ where $M_{\phi}$ is the effective $t$-motive corresponding to $\phi$. Note that 
		\[
		\textbf{C}^{\otimes n+1}\otimes\det(M_\phi)^{\vee}=(\textbf{C}^{\otimes n},1)\otimes (-b\textbf{1},-1) = (M_{0},0)
		\]
		where $M_{0}$ is the effective t-motive $K[t]\tilde{m}$ with $K[t]$-basis $\{\tilde{m}\}$ and whose $\tau$-action is given by $\tau \cdot f\tilde{m}=-bf^{(1)}(t-\theta)^{n}\tilde{m}$
		for all $f\in K[t]$. Thus, $M^{\prime}=(M_\phi,0)\otimes (M_{0},0)=(M_\phi\otimes M_{0},0)$ is indeed an effective $t$-motive. By a similar calculation as in \S 2.3, we see that $M'$ is free of rank $2n+1$ over $K[\tau]$ and the $t$-module corresponding to $M^{\prime}$ is given by $\tilde{G}_n=(\mathbb{G}_{a/K}^{2n+1},\tilde{\phi}_n)$ which implies that $\tilde{G}_n$ is an abelian $t$-module. By Remark \ref{R:coeff}, we know that the logarithm coefficients of $\tilde{G}_n$ are $\mathbb{F}_q^{\times}$-multiple of the logarithm coefficients of $G_n$. Therefore one can obtain by using the idea of the proof of Proposition \ref{P:P3} that the set $\{\tilde{\lambda}_1,\dots,\tilde{\lambda}_{2n+1}\}$ is $A$-linearly independent in $\Lie(\tilde{G}_n)(K_{\infty})$. On the other hand, since $\tilde{\phi}_n(\theta)=\gamma^{-1}\phi_n(\theta)\gamma$ where $\gamma=(-b^{-1})^{1/(q-1)}$, by using the same idea in the proof of Lemma \ref{L:l0} that $\Lie(\tilde{G}_n)(A)$ is free of rank $2n+1$ generated by $e_i$ for $i=1,\dots,2n+1$. Now the second part follows by using Proposition \ref{P:p2} and obtaining the formula 	
		\[L(\tilde{G}_n/A)=[\Lie(\tilde{G}_n)(A):U(\tilde{G}_n/A)]_{A}\]
		in a similar way used to deduce \eqref{E:cnf}.
	\end{proof}

	\section{The proof of the main result}
	Throughout this section, we let the $t$-module $G_n=(\mathbb{G}_{a/K}^{2n+1},\phi_n)$ be constructed from $\phi$ given by $\phi_{\theta}=\theta+a\tau+b\tau^2$ such that $a\in \mathbb{F}_q$ and $b\in \mathbb{F}_q^{\times}$ and $C^{\otimes n}$ as in \eqref{E:tmodule}.
	\subsection{The dual $t$-motive of $G_n$}  Before giving the definition of the dual $t$-motive of $G_n$, we require further setup. Assume that $L$ is a perfect field and is an extension of $K$ in $\CC_{\infty}$. We define the non-commutative polynomial ring $L[\sigma]$ with the condition
	\[
	\sigma B=B^{(-1)}\sigma, \ \ B\in L.
	\] 
	
	For $k,m\in \ZZ_{\geq 1}$, let $\Mat_{k\times m}(L)[\tau]$ be the set of polynomials of $\tau$ with coefficients in $\Mat_{k\times m}(L)$. For any $g=g_0+g_1\tau +\dots +g_l\tau^l\in \Mat_{k\times m}(L)[\tau]$, we further define $g^{*}\in \Mat_{m\times k}(L)[\sigma]$ by $g^{*}:=g_0^{\tr}+(g_1^{(-1)})^{\tr}\sigma +\dots +(g_l^{(-l)})^{\tr}\sigma^l$. Moreover we set the ring $L[t,\sigma]:=L[t][\sigma]$ subject to the condition
	\[
	ct=tc, \ \ \sigma c=c^{(-1)}\sigma, \ \ t\sigma=\sigma t, \ \  c\in L.
	\]
	
	\begin{definition}
		\begin{itemize}
			\item[(i)] A dual $t$-motive $H$ over $L$ is a left $L[t,\sigma]$-module which is free and finitely generated over  $L[\sigma]$ satisfying \[
			(t-\theta)^{s}H/\sigma H=\{0\}
			\]
			for some $s\in \mathbb{Z}_{\geq 0}$.
			\item[(ii)] The morphisms between dual $t$-motives are left $L[t,\tau]$-module homomorphisms and we denote the category of dual $t$-motives by $\mathbb{H}$.
			\item[(iii)] We define the dual $t$-motive $H_G$ of a $t$-module $G=(\mathbb{G}_{a/L}^{d},\psi)$ as the left $L[t,\sigma]$-module $H_G:=\Mat_{1\times d}(L)[\sigma]$ whose $L[\sigma]$-module structure given by the free $L[\sigma]$-module $\Mat_{1\times d}(L)[\sigma]$  and the $L[t]$-module action is given by 
			\begin{equation}\label{E:sigmaact2}
			ct^i\cdot h=ch\psi(\theta^i)^{*}, \ \  h\in H_G, \ \  c\in L.
			\end{equation}
		\end{itemize}
	\end{definition}
	In his unpublished notes, Anderson proved that the category $\mathbb{H}$ of Anderson dual $t$-motives over $L$ is equivalent to the category $\mathcal{G}$ of $t$-modules defined over $L$ (see \cite[Sec. 2.5]{HartlJuschka16} and \cite[Sec. 4.4]{BP02} for more details). In the above definition, we see that one can correspond a $t$-module to a dual $t$-motive. We now describe how we can relate a dual $t$-motive to a $t$-module. 
	
	Let $\delta_0,\delta_1:L[\sigma]\to L$ be  $L$-linear homomorphisms defined by 
	\[
	\delta_0\Big(\sum_{i\geq 0}a_i\sigma^i\Big):=a_0
	\text{, and } 
	\delta_1\Big(\sum_{i\geq 0}a_i\sigma^i\Big):=\sum_{i\geq 0}a_i^{q^i}.
	\]
	It is easy to observe that the kernel of $\delta_0$ (resp. $\delta_1$ ) is equal to $\sigma L[\sigma]$ (resp. $(\sigma-1)L[\sigma]$). By a slight abuse of notation, we further denote the maps $\delta_0,\delta_1:\Mat_{1\times d}(L)[\sigma]\to \Mat_{d\times 1}(L)$ given by $\delta_0((f_1,\dots,f_d))=(\delta_0(f_1),\dots,\delta_0(f_d))^{\tr}$ and $\delta_1((f_1,\dots,f_d))=(\delta_1(f_1),\dots,\delta_1(f_d))^{\tr}$ for any $(f_1,\dots,f_d)\in \Mat_{1\times d}(L)[\sigma]$.
	
	Let $H$ be a dual $t$-motive which is free of rank $d$ over $L[\sigma]$. We consider the map $\eta:H\to \Mat_{d\times 1}(L)$ given by the composition of $\delta_1$ with the map which gives the $L[\sigma]$-module isomorphism $H\cong\Mat_{1\times d}(L)[\sigma]$. One can see that $\eta$ induces the isomorphism 
	\begin{equation}\label{E:isom}
	H/(\sigma-1)H\cong L^d.
	\end{equation}
	The $\mathbb{F}_q[t]$-module structure on $H/(\sigma-1)H$ allows us to put an $\mathbb{F}_q[t]$-module structure on $L^d$ by using \eqref{E:isom} which provides us an $\mathbb{F}_q$-linear ring homomorphism $\eta^{\prime}:\mathbb{F}_q[t]\to \Mat_{d}(L)[\tau]$. This process induces a $t$-module $G=(\mathbb{G}_{a/L}^{d},\psi)$ where   $\psi:A\to \Mat_{d}(L)[\tau]$ is given by setting $\psi(\theta):=\eta^{\prime}(t)$, and such $G$ is called the $t$-module corresponding to $H$.
	
	Our aim is to describe the dual $t$-motive corresponding to the $t$-module $G_n=(\mathbb{G}_{a/L},\phi_n)$. Let  $H_n$ be the left $L[t,\sigma]$-module given by  
	\[
	H_n:=L[t]h_1\oplus L[t]h_2
	\]
	with the $L[t]$-basis $\{h_1,h_2\}$ on which  $\sigma$ acts as  
	\begin{equation}\label{E:sigmaact}
	\sigma\cdot h_1=\frac{(t-\theta)^n}{b}h_2 \ \ \ \ \text{and   } \ \ \ \ 
	\sigma\cdot h_2=(t-\theta)^{n+1}h_1-\frac{a(t-\theta)^{n}}{b}h_2.
	\end{equation}
	By the definition of the $\sigma$-action on $H_n$, we easily see that $H_n$ is the  dual $t$-motive with the $L[\sigma]$-basis 
	$
	\{(t-\theta)^{n}h_1,(t-\theta)^{n-1}h_2,\dots,(t-\theta)h_1,h_2,h_1 \}.
	$
	
	We consider an element $f=\sum_{i=0}^na_i(t-\theta)^ih_1+\sum_{i=0}^{n-1}b_i(t-\theta)^{i}h_2\in H_n$ where $a_i,b_i\in L$. Then we compute

	\[
	\begin{split}
	t f&=(t-\theta+\theta)f\\
	&=\sum_{i=1}^na_{i-1}(t-\theta)^ih_1+a_n(t-\theta)^{n+1}h_1  \\
	&\ \ \ \ +\sum_{i=1}^{n-1}b_{i-1}(t-\theta)^ih_2+b_{n-1}(t-\theta)^{n}h_2 +\sum_{i=0}^n\theta a_i(t-\theta)^ih_1\\
	&\ \ \ \ +\sum_{i=0}^{n-1}\theta b_i(t-\theta)^ih_2\\
	&=\sum_{i=1}^na_{i-1}(t-\theta)^ih_1+\sum_{i=1}^{n-1}b_{i-1}(t-\theta)^ih_2+a_n\sigma\cdot h_2+a_na\sigma\cdot h_1\\
	&\ \ \ \ +b_{n-1}b\sigma\cdot h_1+\sum_{i=0}^n\theta a_i(t-\theta)^ih_1+\sum_{i=0}^{n-1}\theta b_i(t-\theta)^{i}h_2\\
	&=\sum_{i=1}^na_{i-1}(t-\theta)^ih_1+\sum_{i=1}^{n-1}b_{i-1}(t-\theta)^ih_2+(\sigma-1)\cdot (a_n^qh_2) + a_n^qh_2\\
	&\ \ \ \ +(\sigma-1)\cdot (a_n^qah_1) +a_n^qah_1+(\sigma-1)\cdot (b_{n-1}^qbh_1)+b_{n-1}^qbh_1\\
	&\ \ \ \ +\sum_{i=0}^n\theta a_i(t-\theta)^ih_1+\sum_{i=0}^{n-1}\theta b_i(t-\theta)^{i}h_2.
	\end{split}
	\]
	Thus we obtain
	\begin{align*}
	t f&=(\theta a_n+a_{n-1})(t-\theta)^{n}h_1+(\theta b_{n-1}+b_{n-2})(t-\theta)^{n-1}h_2\\
	&\ \ \ \  +(\theta a_{n-1}+a_{n-2})(t-\theta)^{n-1}h_1+\dots+(\theta b_{1}+b_{0})(t-\theta)h_2\\
	&\ \ \ \ +(\theta a_{1}+a_{0})(t-\theta)h_1+(\theta b_0+a_n^q)h_2+(\theta a_0+aa_n^q+bb_{n-1}^q)h_1\\
	&\ \ \ \ +(\sigma-1)\cdot (a_{n}^qh_2+a_n^qah_1+b_{n-1}^qbh_1).
	\end{align*}
	Therefore the $\mathbb{F}_q$-linear ring homomorphism $\eta^{\prime}:\mathbb{F}_q[t]\to \Mat_{2n+1}(L)[\tau]$ described above is given by $ \eta^{\prime}(t)=\phi_n(\theta)$ which proves that $G_{n}$ is the $t$-module corresponding to $H_n$ under the equivalence between the  categories $\mathbb{H}$ and $\mathcal{G}$.
	
	For all $i\in \{1,\dots,2n+1\}$, we recall the definition of $e_i$ and then set $e_i^{\vee}:=e_i^{\tr}\in \Mat_{1\times(2n+1) }(\mathbb{F}_q)$. 
	Let $H_{G_n}$ be the dual $t$-motive of $G_n$. We have that $H_n\cong H_{G_n}$ as left $L[t,\sigma]$-modules. We now define a left $L[t,\sigma]$-module isomorphism
	\[
	\iota:H_n\to H_{G_n}
	\]
	by $\iota(h_1)=e^{\vee}_{2n+1}$ and $\iota(h_2)=e^{\vee}_{2n}$. Using the $L[t]$-module action on $H_n$ defined as in \eqref{E:sigmaact} and the $L[t]$-module action on $H_{G_n}$ defined as in \eqref{E:sigmaact2}, we obtain $\iota((t-\theta)^{n-j}h_1)=e^{\vee}_{2j+1}$ for $0\leq j \leq n$ and $\iota((t-\theta)^{n-1-j}h_2)=e^{\vee}_{2(j+1)}$ for $0\leq j \leq n-1$.
	
	Next proposition is crucial to deduce our main result. 
	\begin{proposition}\label{P:delta} Let $f_1h_1+f_2h_2$ be an arbitrary element in $H_n$ for some $f_1,f_2\in L[t]$. Then 
		\begin{multline*}
		\delta_0\circ\iota(f_1h_1+f_2h_2)=[
		\partial_{t}^n(f_1)_{|t=\theta},
		\partial_{t}^{n-1}(f_2)_{|t=\theta},
		\partial_{t}^{n-1}(f_1)_{|t=\theta},
		\dots,\\
		\partial_{t}(f_2)_{|t=\theta},
		\partial_{t}(f_1)_{|t=\theta},
		f_2{_{|t=\theta}},
		f_1{_{|t=\theta}}
		]^{\tr}.
		\end{multline*}
	\end{proposition}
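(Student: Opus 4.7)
The plan is to exploit that $\delta_0\circ\iota$ is $L$-linear and vanishes on $\sigma H_n$: indeed $\iota$ is an $L[t,\sigma]$-module isomorphism onto $H_{G_n}=\Mat_{1\times(2n+1)}(L)[\sigma]$, and by construction $\delta_0$ kills everything of the form $\sigma\cdot(-)$. Consequently, to evaluate $\delta_0\circ\iota(f_1h_1+f_2h_2)$, it suffices to reduce $f_1h_1+f_2h_2$ modulo $\sigma H_n$ against the $L[\sigma]$-basis $\{(t-\theta)^nh_1,(t-\theta)^{n-1}h_2,\dots,(t-\theta)h_1,h_2,h_1\}$ of $H_n$, and then read off the resulting $L$-coefficients in the order dictated by $\iota$.

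To carry this out, I would first Taylor-expand at $t=\theta$: for any polynomial $f\in L[t]$ one has $f=\sum_{j\ge 0}\partial_t^j(f)|_{t=\theta}(t-\theta)^j$ (the polynomial case of Proposition~\ref{P:CGM}, which is elementary). Substituting this into $f_1h_1+f_2h_2$, the terms with $0\le j\le n$ in the $h_1$-sum and $0\le j\le n-1$ in the $h_2$-sum are already $L[\sigma]$-basis elements, sent under $\iota$ to $e_{2(n-j)+1}^{\vee}$ and $e_{2(n-j)}^{\vee}$ respectively, and hence they contribute the corresponding hyperderivative values in exactly the positions required by the statement.

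The decisive step is to show that the remaining tail of the Taylor expansion lies in $\sigma H_n$, namely $(t-\theta)^jh_1\in\sigma H_n$ for $j\ge n+1$ and $(t-\theta)^jh_2\in\sigma H_n$ for $j\ge n$. The base cases come directly from \eqref{E:sigmaact}, which give $(t-\theta)^nh_2=b\sigma\cdot h_1$ and $(t-\theta)^{n+1}h_1=\sigma\cdot h_2+a\sigma\cdot h_1$. For higher powers, I would use the commutation relations in $L[t,\sigma]$: since $t\sigma=\sigma t$ and $c\sigma=\sigma c^{(1)}$ for every $c\in L$, one computes $(t-\theta)\sigma=\sigma(t-\theta^q)$, and iterating yields $(t-\theta)^k\sigma H_n\subseteq\sigma H_n$ for all $k\ge 0$. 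This is the only genuinely non-formal point of the argument and I expect it to be the main obstacle, since care is needed with the twisting of scalars through $\sigma$; once it is in place, the reduction of the high-degree tails is automatic.

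Combining these steps gives the congruence
\[
f_1h_1+f_2h_2\equiv\sum_{j=0}^{n}\partial_t^j(f_1)|_{t=\theta}\,(t-\theta)^jh_1+\sum_{j=0}^{n-1}\partial_t^j(f_2)|_{t=\theta}\,(t-\theta)^jh_2\pmod{\sigma H_n}.
\]
Applying $\iota$ and then $\delta_0$, the scalar $\partial_t^j(f_1)|_{t=\theta}$ lands in coordinate $2(n-j)+1$ and $\partial_t^j(f_2)|_{t=\theta}$ lands in coordinate $2(n-j)$ of the output column vector, which is precisely the formula claimed in Proposition~\ref{P:delta}. The rest of the proof is routine bookkeeping of indices.
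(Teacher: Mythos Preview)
Your proposal is correct and follows essentially the same route as the paper: Taylor-expand $f_1,f_2$ at $t=\theta$, discard the high-degree tails because they land in $\sigma H_n$ (and $\delta_0$ kills $\sigma L[\sigma]$), and then read off the basis coefficients under $\iota$. The only difference is cosmetic: the paper simply asserts that $(t-\theta)^nh_2\in\sigma H_n$ and $(t-\theta)^{n+1}h_1\in\sigma H_n$ and moves on, whereas you take the extra care of verifying $(t-\theta)^k\sigma H_n\subseteq\sigma H_n$ via the commutation $(t-\theta)\sigma=\sigma(t-\theta^q)$ to handle all higher powers---a point the paper leaves implicit. Your concern that this step is the ``main obstacle'' is overstated; it is a one-line check, and the rest of your argument goes through without difficulty.
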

	\begin{proof}For $i=1,2$, we can write $f_i=\sum_{j=0}^{d_i}g_{i,j}(t-\theta)^j$ where $d_i$ is a non-negative integer and $g_{i,j}\in L$. By the $\sigma$-action on $H_n$ given as in \eqref{E:sigmaact}, we see that $(t-\theta)^nh_2\in \sigma H_n$ and $(t-\theta)^{n+1}h_1=\sigma h_2-a\sigma h_1\in \sigma H_n$. Thus we have 
		\[
		\delta_0\circ\iota(f_1h_1+f_2h_2)=\delta_0\circ\iota\Big(\sum_{j=0}^ng_{1,j}(t-\theta)^jh_1+\sum_{j=0}^{n-1}g_{2,j}(t-\theta)^jh_2\Big).
		\]
		Since both $\delta_0$ and $\iota$ are $L$-linear maps, we see that 
		\begin{equation}\label{E:proof1}
		\delta_0\circ\iota(f_1h_1+f_2h_2)=(g_{1,n},g_{2,n-1},g_{1,n-1},\dots,g_{2,0},g_{1,0})^{\tr}.
		\end{equation}
		By Proposition \ref{P:CGM}, we have $\partial_{t}^j(f_i)_{|t=\theta}=g_{i,j}$ for $i=1,2$. Thus by \eqref{E:proof1}, we obtain
		\begin{multline*}
		\delta_0\circ\iota(f_1h_1+f_2h_2)=[\partial_{t}^n(f_1)_{|t=\theta},\partial_{t}^{n-1}(f_2)_{|t=\theta},\partial_{t}^{n-1}(f_1)_{|t=\theta},\dots,\\
		\partial_{t}(f_2)_{|t=\theta},
		\partial_{t}(f_1)_{|t=\theta},f_{2}{_{|t=\theta}},f_{1}{_{|t=\theta}}]^{\tr}
		\end{multline*}
		which finishes the proof.
	\end{proof}
	\subsection{Inverse of the Frobenius} In this subsection, for any $j\in \mathbb{Z}_{\geq 0}$, we introduce a crucial map $\varphi_j:H_n\to \Mat_{(2n+1)\times 1}(L)$ for our purposes (see \cite[Sec. 2]{ADTR} for more details). We set $p_0(t):=1$ and for any $j\in \mathbb{Z}_{\geq 0}$, choose $p_j(t)\in K[t]$ such that 
	\begin{equation}\label{E:modulo}
	p_j(t)(t-\theta^{q^j})^{n+1}\equiv 1 \pmod{(t-\theta)^{n+1}K[t]}.
	\end{equation}
	By the $\sigma$-action on $H_n$ as in \eqref{E:sigmaact}, we see that $(t-\theta)^{n+1}H_n\subset \sigma H_n$. Thus for any $h\in H_n$, there exists a unique element $x\in H_n$ such that $\prod_{k=0}^{j-1}(t-\theta^{q^{-k}})^{n+1}h=\sigma^{j}x$. We now set 
	\[
	\varphi_j(h):=\delta_0\circ\iota(p_0(t)\dots p_j(t)x).
	\]
	The map $\varphi_j$ is called the $j$-th inverse of the Frobenius. 
	
	We define the $K_{\infty}$-vector space $W$ given by
	\[
	W:=\{c_1\cdot\varphi_1(h_1)+c_2\cdot \varphi_1(h_2)\ \ |c_1,c_2\in K_{\infty} \}\subset \Lie(G_n)(\CC_{\infty}).
	\]
	Note that the $K_{\infty}$-vector space structure on $W$ is induced by the map given in \eqref{E:exten}.
	
	We now calculate $\varphi_1(h_1)$ and $\varphi_1(h_2)$. Note that by the property \eqref{E:modulo} of the polynomial $p_1(t)\in K[t]$, we have
	\begin{equation}\label{E:inv1}
	p_1(t)(t-\theta^{q})^{n+1}-r(t)(t-\theta)^{n+1}=1
	\end{equation}
	for some $r(t)\in K[t]$. Hence, writing $p_1(t)=\sum_{i=0}^ny_i(t-\theta)^i+y(t)(t-\theta)^{n+1}$ and $(t-\theta^{q})^{n+1}=(t-\theta+\theta-\theta^q)^{n+1}=\sum_{j=0}^{n}c_j(t-\theta)^j+(t-\theta)^{n+1}$ for some $y(t)\in K[t]$ and $c_j,y_i\in K$ where $1\leq i,j \leq n$, we see that \eqref{E:inv1} becomes
	\begin{equation}\label{E:inv2}
	\Big(\sum_{i=0}^ny_i(t-\theta)^i\Big)\Big(\sum_{j=0}^{n}c_j(t-\theta)^j\Big)+Y(t)(t-\theta)^{n+1}=1
	\end{equation}
	for some $Y(t)\in K[t]$. From \eqref{E:inv2}, one can calculate $y_s$ for $0\leq s \leq n$ recursively using the equations
	\begin{equation}\label{E:inv4}
	\sum_{i=0}^sy_ic_{m-i}=\begin{cases}
	1\text{ if } s=0\\0 \text{ otherwise}
	\end{cases}.
	\end{equation}
	Since equations in \eqref{E:inv4} to determine $y_s$ are also used to determine the coefficient of $(t-\theta)^s$ in the Taylor expansion of $(t-\theta^q)^{-(n+1)}$ at $t=\theta$, we see that 
	\begin{equation}\label{E:inv5}
	p_1(t)(t-\theta^q)^j \pmod{(t-\theta)^{n+1}K[t]}\equiv \frac{1}{(t-\theta^q)^{n+1-j}}\pmod{(t-\theta)^{n+1}K[t]}
	\end{equation}
	for $j=0,1$. Using \eqref{E:sigmaact} and our assumption on the elements $a$ and $b$, we also observe that 
	\[
	(t-\theta)^{n+1}h_2=(t-\theta)\sigma bh_1=\sigma b(t-\theta^q)h_1.
	\]
	Thus by Proposition \ref{P:CGM}, Proposition \ref{P:delta} and  \eqref{E:inv5}, we have
	\[
	\varphi_1(h_2)=\delta_0\circ\iota(p_0(t)p_1(t)b(t-\theta)^qh_1)=\begin{pmatrix}
	\partial_{t}^{n}\Big(\frac{b}{(t-\theta^q)^n}\Big)_{|t=\theta}\\
	0\\
	\partial_{t}^{n-1}\Big(\frac{b}{(t-\theta^q)^n}\Big)_{|t=\theta}\\
	\vdots\\
	0\\
	\frac{b}{(t-\theta^q)^n}_{|t=\theta}
	\end{pmatrix}.
	\]
	On the other hand, by \eqref{E:sigmaact}, we also have
	\[
	(t-\theta)^{n+1}h_1=\sigma h_2+a\sigma h_1=\sigma(h_2+a h_1).
	\]
	Thus similar to the calculation of $\varphi_1(h_2)$, we now obtain
	\[
	\varphi_1(h_1)=\delta_0\circ\iota(p_0(t)p_1(t)(h_2+ah_1))=\begin{pmatrix}
	\partial_{t}^{n}\Big(\frac{a}{(t-\theta^q)^{n+1}}\Big)_{|t=\theta}\\
	\partial_{t}^{n-1}\Big(\frac{1}{(t-\theta^q)^{n+1}}\Big)_{|t=\theta}\\
	\partial_{t}^{n-1}\Big(\frac{a}{(t-\theta^q)^{n+1}}\Big)_{|t=\theta}\\
	\vdots\\
	\frac{1}{(t-\theta^q)^{n+1}}_{|t=\theta}\\
	\frac{a}{(t-\theta^q)^{n+1}}_{|t=\theta}
	\end{pmatrix}.
	\]
	
	\begin{theorem}\label{T:proves} For any positive integer $n$ satisfying $2n+1\leq q$, we have 
		\[ 
		W=K_{\infty}\cdot e_{2n}\oplus K_{\infty}\cdot e_{2n+1}\cong \frac{\Lie(G_n)(K_{\infty})}{(\partial_{\phi_{n}}(\theta)-\theta\Id_{2n+1})\Lie(G_n)(K_{\infty})}.
		\]
		In particular $\varphi_1(h_1)$ and $\varphi_1(h_2)$ are $K_{\infty}$-linearly independent.
	\end{theorem}
	\begin{proof}
		We first prove the inclusion $\supseteq$. To do this we need to find $c_{i,j}\in K_{\infty}$ for $i,j\in \{1,2\}$ so that 
		\[
		\sum_{j=1}^2\partial_{\phi_n}(c_{i,j})\varphi_1(h_j)=e_{2n+2-i}.
		\]
		Note that by the properties of hyperderivatives (see \cite[Lem. 2.3.23]{PLogAlg}) and Proposition \ref{P:CGM}  we have
		\begin{equation}\label{E:same}
		\begin{split}
		(t-\theta^q)^n&=\sum_{i=0}^n\partial_{t}^i((t-\theta^q)^n)_{|t=\theta}(t-\theta)^i\\
		&=\sum_{i=0}^n\binom{n}{i}(\theta-\theta^q)^{n-i}(t-\theta)^i\\
		&=\sum_{i=0}^n\partial_{\theta}^i((\theta-\theta^q)^n)(t-\theta)^i.
		\end{split}
		\end{equation}
		Thus, \eqref{E:same} implies that $\partial_{t}^i((t-\theta^q)^n)_{|t=\theta}=\partial_{\theta}^i((\theta-\theta^q)^n)$ for all $0\leq i \leq n$. Therefore, using \eqref{E:rule}, we see that 
		\begin{equation}\label{E:same2}
		\begin{split}
		&\sum_{\substack{i.j\geq 0\\i+j=m}}\partial_{\theta}^j((\theta-\theta^q)^n)\partial_{t}^i\Big(\frac{1}{(t-\theta^q)^n}\Big)_{|t=\theta}\\
		&\ \ =\sum_{\substack{i.j\geq 0\\i+j=m}}\partial_{t}^j((t-\theta^q)^n)_{|t=\theta}\partial_{t}^i\Big(\frac{1}{(t-\theta^q)^n}\Big)_{|t=\theta}\\
		&\ \ =\partial_{t}^m(1)\\
		&\ \ =0
		\end{split}
		\end{equation}
		for all $1\leq m\leq n$.
		We now choose $c_{1,2}=b^{-1}(\theta-\theta^q)^n$ and $c_{1,1}=0$. Thus by \eqref{E:same2}, one can obtain 
		\begin{equation}\label{E:same4}
		\partial_{\phi_n}(c_{1,2})\varphi_1(h_2)=d_n[c_{1,2}]\varphi_1(h_2)=e_{2n+1}.
		\end{equation}
		Similarly, if we choose $c_{2,2}=-ab^{-1}(\theta-\theta^q)^n$ and $c_{2,1}=(\theta-\theta^q)^{n+1}$, we see that 
		\begin{equation}\label{E:same5}
		\partial_{\phi_n}(c_{2,1})\varphi_1(h_1)+\partial_{\phi_n}(c_{2,2})\varphi_1(h_2)=e_{2n}.
		\end{equation}
		Thus, we have $W\supseteq K_{\infty}\cdot e_{2n}\oplus K_{\infty}\cdot e_{2n+1}$. On the other hand, note that since the matrices $\partial_{\phi_n}(c_{i,j})=d_n[c_{i,j}]$ has non-zero determinant when $c_{i,j}\neq 0$, by using \eqref{E:same4}, we obtain $\varphi_1(h_2)=d_n[c_{1,2}]^{-1}e_{2n+1}$. Thus, by multiplying both sides of \eqref{E:same5} with $d_n[c_{2,1}]^{-1}$, we can obtain $\varphi_1(h_1)$ in terms of a linear combination of $e_{2n}$ and $e_{2n+1}$. Thus we have the desired inclusion $W\subseteq K_{\infty}\cdot e_{2n}\oplus K_{\infty}\cdot e_{2n+1}$. It also implies that $\varphi_1(h_1)$ and $\varphi_1(h_2)$ are $K_{\infty}$-linearly independent. The isomorphism of the $K_{\infty}$-vector spaces in the statement of the theorem follows from the definition of $G_n$ and the details are left to the reader.
	\end{proof}
	

	We recall the $t$-module  $\tilde{G}_n=(\mathbb{G}_{a/K}^{2n+1},\tilde{\phi}_n)$ defined as in Remark \ref{R:coeff}. To prove the main result of the paper, we need some further analysis on $\tilde{G}_n$ and its dual $t$-motive. Consider the left  $L[t,\sigma]$-module
	\[
	\tilde{H}_n:=L[t]\tilde{h}_1\oplus L[t]\tilde{h}_2
	\]
	on which  $\sigma$ acts as  
	\[
	\sigma\cdot \tilde{h}_1=-(t-\theta)^n\tilde{h}_2\ \ \ \ \text{and   } \ \ \ \ 
	\sigma\cdot\tilde{h}_2=-b(t-\theta)^{n+1}\tilde{h}_1+a(t-\theta)^{n}\tilde{h}_2.
	\]
	
	By a straightforward modification of the calculations in the present section, one can obtain that $\tilde{H}_n$ is the dual $t$-motive corresponding to $\tilde{G}_n$. Furthermore, we have $\varphi_1(\tilde{h}_1)=-b^{-1}\varphi_1(h_1)$ and $\varphi_1(\tilde{h}_2)=-b^{-1}\varphi_1(h_2)$. Since $b\in \mathbb{F}_q^{\times}$, by Theorem \ref{T:proves}, one can obtain
	\begin{equation}\label{E:ww}
	\begin{split}
	\tilde{W}:&=\{c_1\cdot\varphi_1(\tilde{h}_1)+c_2\cdot \varphi_1(\tilde{h}_2)\ \ |c_1,c_2\in K_{\infty} \}\\
	&=K_{\infty}\cdot e_{2n}\oplus K_{\infty}\cdot e_{2n+1}\\
	&\cong \frac{\Lie(\tilde{G}_n)(K_{\infty})}{(\partial_{\tilde{\phi}_{n}}(\theta)-\theta\Id_{2n+1})\Lie(\tilde{G}_n)(K_{\infty})}.
	\end{split}
	\end{equation}
	
	By using Theorem \ref{T:proves} and \cite[Prop. 4.2, 4.3, Thm. 4.4]{ADTR}, one can easily deduce the following theorem.
	\begin{theorem}\label{T:module} 
		\begin{itemize}
			\item[(i)] Let $\tilde{W}$ be the $K_{\infty}$-vector space as in \eqref{E:ww} and let $\Log_{\tilde{G}_n}=\sum_{i\geq 0}\tilde{P}_i\tau^i$ be the logarithm series of the $t$-module $\tilde{G}_n$. Then for any natural number $n$ satisfying $2n+1 \leq q$, the $K_{\infty}$-vector space (via the  action of  $\partial_{\tilde{\phi}_{n}}$) generated by $\tilde{P}_i\tau^i(x)$ for all $i\geq 1$ and any $x\in \Lie(\tilde{G}_n)(\CC_{\infty})$ is contained in $\tilde{W}$.  	
			\item[(ii)] $U(\tilde{G}_n/A)\cap \tilde{W}$ and $\Lie(\tilde{G}_n)(A)\cap \tilde{W}$ are $A$-lattices in $\tilde{W}$.
		\end{itemize}
	\end{theorem}
	
	\subsection{Taelman $L$-values and Goss $L$-series}
	For a given abelian $t$-module $G$, we recall the definition of the dual of the Taelman $t$-motive corresponding to $G$ and the Taelman $L$-value $L(G/A)$ from \S2.4 and \S2.5.
	
	We continue with letting $\phi$ to be the Drinfeld $A$-module of rank 2 given by $\phi_{\theta}=\theta+a\tau+b\tau^2$ such that $a\in \mathbb{F}_q$ and $b\in \mathbb{F}_q^{\times}$ and recall that $M_{\phi}$ is the effective $t$-motive corresponding to $\phi$ introduced as in Example \ref{Ex1}(i).  We have by \cite[Rem. 5]{Taelman} (see also \cite[Sec. 2]{ChangEl-GuindyPapanikolas}) that
	\begin{equation}\label{E:dualTael}
	L(M_{\phi}^{\vee},0)=L(\phi/A).
	\end{equation}
	
	Using Goss' results \cite[Sec. 5.6]{Goss} on abelian $t$-modules and applying the theory of Hom shtukas developed in \cite[Sec. 8,12]{M18} on a suitable shtuka model of $\tilde{G}_n$, one can obtain that 
	\[
	L(M_{\tilde{G}_n}^{\vee},0)=L(\tilde{G}_n/A).
	\]
	We also refer the reader to \cite{ADTR20} for another approach using the dual $t$-motive $\tilde{H}_n$ of $\tilde{G}_n$ as well as \cite[Sec. 4.1]{ADTR2020} and the references therein for more details.
	
	\begin{remark}\label{R:conv} 
		Let $\tilde{\phi}$ be the Drinfeld $A$-module  given in Example \ref{Ex:1}(ii) and $M_{\tilde{\phi}}$ be the effective $t$-motive corresponding to $\tilde{\phi}$. Since $L(M_{\tilde{\phi}}^{\vee},s)$ converges for any integer $s\geq 0$ by \cite[Prop. 8]{Taelman}, using \eqref{E:prten11} and \eqref{E:Lser}, we see that  $L(M_{\phi},s)$ converges for any integer $s\geq 1$.
	\end{remark}
	\begin{remark}\label{R:valatone} We briefly explain how one can obtain the value of $L(M_{\phi},n)$ at $n=1$. Let $\tilde{\phi}$ be the Drinfeld $A$-module given in Example \ref{Ex:1}(ii). Consider the $\mathbb{F}_q$-vector space 
		\[
		\mathfrak{m}^{\prime}:=\{x\in K_{\infty} \ \ |  v_{\infty}(x)\geq 1\}.
		\]
		By \cite[Cor. 4.2]{EP13}, we know that $\log_{\tilde{\phi}}$ converges on $\mathfrak{m}^{\prime}$. Thus, in a similar way to the proof of Proposition \ref{P:p1}, we get $H(\tilde{\phi}/A)=\{0\}$. Moreover, again by using \cite[Cor. 4.2]{EP13}, we see that $\log_{\tilde{\phi}}$ converges at 1. This implies that $\log_{\tilde{\phi}}(1)\in U(\tilde{\phi}/A)$. Since $U(\tilde{\phi}/A)$ is an $A$-lattice in $K_{\infty}$, using the minimality of the norm of $\log_{\tilde{\phi}}(1)$ among the elements of $U(\tilde{\phi}/A)$, we see that $U(\tilde{\phi}/A)=\log_{\tilde{\phi}}(1)A$. Thus by \cite[Thm. 1]{Taelman}, we have $L(\tilde{\phi}/A)=\log_{\tilde{\phi}}(1)$. By using Example \ref{Ex:1}(ii), \eqref{E:Lser} and  \eqref{E:dualTael}, we obtain
		\[
		L(M_{\phi},1)=L(M_{\phi}\otimes \textbf{C}^{\vee},0)=L(M_{\tilde{\phi}}^{\vee},0)=L(\tilde{\phi}/A)=\log_{\tilde{\phi}}(1).
		\]
	\end{remark}
	We finish this subsection with the following proposition. 
	\begin{proposition}\label{P:lseries}  For any $n\geq 1$, we have 
		\[
		L(\tilde{G}_n/A)=L(M_\phi,n+1).
		\]
	\end{proposition}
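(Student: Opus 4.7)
The plan is to express $L(\tilde{G}_n/A)$ as the value at $0$ of the Goss $L$-series attached to the dual of the Anderson $t$-motive of $\tilde{G}_n$, and then to transport this to $L(M_\phi,n+1)$ via a chain of tensor-product identifications in the category $\mathcal{T}$ of Taelman $t$-motives.

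First, I will apply the Taelman-type class number formula recorded just before Remark \ref{R:conv} to the $t$-module $\tilde{G}_n$. Since the dual $t$-motive $\tilde{H}_n$ introduced in Remark \ref{R:tilde} is free and finitely generated over $K[t]$, the argument based on \cite[Thm.~5.7]{ADTR20} and \cite[Sec.~8, 12]{M18} applies verbatim and yields
\[
L(\tilde{G}_n/A)=L(M_{\tilde{G}_n}^{\vee},0),
\]
where $M_{\tilde{G}_n}$ is the Anderson $t$-motive corresponding to $\tilde{G}_n$. The construction in \S 2.3 of $M_n=M_\phi\otimes_{K[t]}\textbf{C}^{\otimes n}$ carries over with $\phi$ replaced by $\tilde{\phi}$ and identifies $M_{\tilde{G}_n}\cong M_{\tilde{\phi}}\otimes_{K[t]}\textbf{C}^{\otimes n}$.

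Next, I will compute $M_{\tilde{G}_n}^{\vee}$ inside $\mathcal{T}$. The canonical isomorphism \eqref{E:can2} with $\mathbb{M}_3=\mathbb{M}_4=\textbf{1}$ gives
\[
M_{\tilde{G}_n}^{\vee}\cong M_{\tilde{\phi}}^{\vee}\otimes(\textbf{C}^{\otimes n})^{\vee}.
\]
Substituting the identifications $M_{\tilde{\phi}}^{\vee}\cong(M_\phi,-1)$ from Example \ref{Ex:1}(ii) and $(\textbf{C}^{\otimes n})^{\vee}\cong(\textbf{1},-n)$ from Example \ref{Ex:1}(i), and using the definition \eqref{E:tensdef} of tensor products in $\mathcal{T}$, I conclude $M_{\tilde{G}_n}^{\vee}\cong(M_\phi,-n-1)$.

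Finally, I will iterate the functional equation \eqref{E:Lser}. Writing $(M_\phi,-n-1)=M_\phi\otimes(\textbf{C}^{\vee})^{\otimes(n+1)}$ and applying $L(\mathbb{M}\otimes\textbf{C},s+1)=L(\mathbb{M},s)$ a total of $n+1$ times (with $\mathbb{M}$ taken to be $M_\phi\otimes(\textbf{C}^{\vee})^{\otimes k}$ for $k=0,1,\dots,n$) yields
\[
L((M_\phi,-n-1),0)=L(M_\phi,n+1),
\]
and combining the three steps delivers the desired equality. The main obstacle in this plan is the first step, namely the extension of the class number formula \eqref{E:dualTael} from a Drinfeld module to the higher-dimensional $t$-module $\tilde{G}_n$; once this is granted, all remaining work is purely formal manipulation of duals and tensor products within $\mathcal{T}$.
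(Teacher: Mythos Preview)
Your proof is correct and follows essentially the same strategy as the paper: identify the Anderson $t$-motive of $\tilde{G}_n$, compute its dual in $\mathcal{T}$ as $(M_\phi,-n-1)$, and then iterate \eqref{E:Lser}. The paper parametrizes the motive of $\tilde{G}_n$ as $M'=M_\phi\otimes\textbf{C}^{\otimes n+1}\otimes\det(M_\phi)^{\vee}$ and cancels the determinant factors when dualizing, whereas you write it as $M_{\tilde{\phi}}\otimes\textbf{C}^{\otimes n}$ and invoke the already-computed identity $M_{\tilde{\phi}}^{\vee}\cong(M_\phi,-1)$ from Example~\ref{Ex:1}(ii); since $M_{\tilde{\phi}}\cong M_\phi\otimes(-b\textbf{1})$ and $(-b\textbf{1})\otimes\textbf{C}^{\otimes n}\cong M_0$, the two descriptions coincide and your route is slightly more economical.

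One point deserves a word of caution: your phrase ``carries over with $\phi$ replaced by $\tilde{\phi}$'' is a little loose. The $t$-module $\tilde{G}_n$ is \emph{defined} as the scalar conjugate $\gamma^{-1}G_n\gamma$ (Remark~\ref{R:coeff}), not as the tensor construction of \S2.3 applied to $\tilde{\phi}$; the two $t$-modules have different $E$-matrices and are only isomorphic, not equal. The isomorphism $M_{\tilde{G}_n}\cong M_{\tilde{\phi}}\otimes\textbf{C}^{\otimes n}$ is genuine, but it comes from the identification $M_{\tilde{\phi}}\otimes(-b^{-1}\textbf{1})\cong M_\phi$ of Example~\ref{Ex:1}(ii) together with the paper's computation that the $t$-module of $M_\phi\otimes M_0$ is $\tilde{G}_n$, rather than from a verbatim repetition of \S2.3.
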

	\begin{proof}  In a similar way to show \eqref{E:prten11}, one can also obtain
		\begin{equation}\label{E:prten2}
		M_{\phi}^{\vee}= M_{\phi}\otimes \det(M_{\phi})^{\vee}.
		\end{equation}
		We also have
		\begin{equation}\label{E:dual}
		\det(M_\phi)^{\vee}\otimes \det(M_\phi)=(-b\textbf{1},-1)\otimes (-b^{-1}\textbf{1},1)\cong\textbf{1}.
		\end{equation}
		Now recall the effective $t$-motive $M^{\prime}=M_\phi\otimes\textbf{C}^{\otimes n+1}\otimes\det(M_\phi)^{\vee}$ from the proof of Theorem \ref{T:unit}(ii) whose corresponding  $t$-module  is given by $\tilde{G}_n=(\mathbb{G}_{a/K}^{2n+1},\tilde{\phi}_n)$. By \eqref{E:can2}, \eqref{E:prten2} and \eqref{E:dual}, we have 
		\[
		\begin{split}
		(M^{\prime})^{{\vee}}= M_\phi\otimes\det(M_\phi)^{\vee}\otimes{(\textbf{C}^{\otimes n+1})}^{\vee}\otimes\det(M_\phi)= M_\phi\otimes{(\textbf{C}^{\otimes n+1})}^{\vee}.
		\end{split}
		\]
		
		Since $\textbf{C}^{\otimes n_1}\otimes \textbf{C}^{\otimes n_2}=\textbf{C}^{\otimes (n_1+n_2)}$ for any positive integers $n_1$ and  $n_2$, using the equality in \eqref{E:Lser} repeatedly together with \eqref{E:dualC}, we obtain
		\[
		L(\tilde{G}_n/A)=L((M^{\prime})^{\vee},0)=L(M_\phi,n+1).
		\]
	\end{proof}

	\subsection{Proof of Theorem \ref{T:specvalue}}
	Using Theorem \ref{T:module}, we prove the next proposition.
	\begin{proposition}\label{P:lattice} For any positive integer $n$ such that $2n+1\leq q$, we have 
		\[U(\tilde{G}_n/A)\cap \tilde{W}=A\cdot  \Log_{\tilde{G}_n}(e_{2n})\oplus A\cdot \Log_{\tilde{G}_n}(e_{2n+1})
		\]
		and 
		\[
		\Lie(\tilde{G}_n)(A)\cap \tilde{W}=A\cdot  e_{2n}\oplus A\cdot  e_{2n+1}.
		\]
	\end{proposition}
	\begin{proof}
		By Theorem \ref{T:unit}(ii), we obtain $U(\tilde{G}_n/A)\cap \tilde{W}\subset \oplus_{i=1}^{2n+1}A\cdot \Log_{\tilde{G}_n}(e_i)$.  By Theorem \ref{T:module}(i), we see that $\tilde{P}_ie_{2n}\in \tilde{W}$ for $i\geq 1$. Since the vectors $e_{2n}$ and $e_{2n+1}$ are in $\tilde{W}$ and $\tilde{W}$ is a finite dimensional normed vector space, the sum $e_{2n+j}+\sum_{i=1}^{\infty}\tilde{P}_ie_{2n+j}=\Log_{\tilde{G}_n}(e_{2n+j})$ is also in $\tilde{W}$ for $j=0,1$. By Theorem \ref{T:module}(ii), we know that $U(\tilde{G}_n/A)\cap \tilde{W}$ is an $A$-lattice in $\tilde{W}$ and therefore is a free $A$-module of rank two by Remark \ref{R:rank}. Thus we obtain $U(\tilde{G}_n/A)\cap \tilde{W}=A\cdot  \Log_{\tilde{G}_n}(e_{2n})\oplus A\cdot  \Log_{\tilde{G}_n}(e_{2n+1})$ as desired. The latter equality in the statement of the proposition can be obtained similarly. 
	\end{proof}

	Now we are ready to state our main result.
	\begin{theorem}\label{T:specvalue} Let $\phi$ be the Drinfeld $A$-module defined as in \eqref{E:rank2} such that $a\in \mathbb{F}_q$ and $b\in \mathbb{F}_q^{\times}$. Then for any positive integer $n$ such that $2n+1 \leq q$, we have
		\begin{multline}\label{E:statement}		
		L(M_\phi,n+1)
		=\bigg(\sum_{i=0}^{\infty}\frac{(-1)^{i}b^{-i}\gamma_i}{L_i^{n}}\bigg)\bigg(1+\sum_{i=1}^{\infty}\frac{(-1)^{i}b^{-(i-1)}F_{i-1}}{L_i^{n}}\bigg)\\
		-\bigg(\sum_{i=1}^{\infty}\frac{(-1)^{i}b^{-(i-1)}\gamma_{i-1}}{L_i^{n}}\bigg)\bigg(\sum_{i=0}^{\infty}\frac{(-1)^{i}b^{-i}F_i}{L_i^{n}}\bigg).
		\end{multline}
		where $F_i$ is the sum of the components of $\gamma_i$ corresponding to  shadowed partitions in $P_{2}^{1}(i)$ for all $i\geq 0$.
	\end{theorem}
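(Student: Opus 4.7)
My plan is to express $L(M_\phi, n+1)$ as a $2\times 2$ determinant and identify that determinant with the right-hand side of \eqref{E:statement}. By Proposition \ref{P:lseries} I replace $L(M_\phi,n+1)$ with the Taelman $L$-value $L(\tilde{G}_n/A)$. Remark \ref{R:coeff} shows that the logarithm $\Log_{\tilde{G}_n}$ has coefficients $\tilde{P}_i = (-1)^i b^{-i} P_i$ and converges on $\mathfrak{D}_n$, so the arguments of \S4 apply verbatim to $\tilde{G}_n$: in particular $H(\tilde{G}_n/A) = \{0\}$, the class number formula \eqref{E:cnf} gives
\[
L(\tilde{G}_n/A) = [\Lie(\tilde{G}_n)(A):U(\tilde{G}_n/A)]_A,
\]
and the analogues of Theorem \ref{T:unit} and Proposition \ref{P:lattice} hold with the $2$-dimensional $K_\infty$-subspace $\tilde{W}\subset \Lie(\tilde{G}_n)(K_\infty)$ of Remark \ref{R:tilde} in place of $W$.

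Next, I reduce this full index to the ``two-dimensional'' one attached to $\tilde{W}$ via \eqref{E:proppp}. Setting $V=\Lie(\tilde{G}_n)(K_\infty)$, $V'=\tilde{W}$, $V_1=\Lie(\tilde{G}_n)(A)$ and $V_2=U(\tilde{G}_n/A)$, the formula \eqref{E:proppp} yields
\[
[V_1:V_2]_A = [V_1\cap\tilde{W}:V_2\cap\tilde{W}]_A\cdot [V_1/(V_1\cap \tilde{W}):V_2/(V_2\cap\tilde{W})]_A.
\]
By Theorem \ref{T:module}(i) (valid for $\tilde{G}_n$ since $2n+1\leq q$), each $\Log_{\tilde{G}_n}(e_i)-e_i$ lies in $\tilde{W}$; combined with Theorem \ref{T:unit} and Proposition \ref{P:lattice} for $\tilde{G}_n$, this identifies the two quotients $V_1/(V_1\cap \tilde{W})$ and $V_2/(V_2\cap \tilde{W})$ as the same free $A$-submodule of $V/\tilde{W}$ generated by the images of $e_1,\dots,e_{2n-1}$. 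Hence the second factor is $1$, and $L(\tilde{G}_n/A)=[V_1\cap\tilde{W}:V_2\cap\tilde{W}]_A$.

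Finally, this restricted index is the determinant of the $2\times 2$ matrix whose columns express $\Log_{\tilde{G}_n}(e_{2n})$ and $\Log_{\tilde{G}_n}(e_{2n+1})$ in the basis $\{e_{2n},e_{2n+1}\}$ of $\tilde{W}$ under the $\partial_{\tilde{\phi}_n}$-action. Since $\partial_{\tilde{\phi}_n}=\partial_{\phi_n}=d_n[\cdot]$ by Lemma \ref{L:dpart}, an inspection of the shape of $d_n[c]$ shows that if $\Log_{\tilde{G}_n}(e_{2n+j}) = \partial_{\tilde{\phi}_n}(\alpha_j)e_{2n}+\partial_{\tilde{\phi}_n}(\beta_j)e_{2n+1}$, then $\alpha_j$ and $\beta_j$ are exactly the $(2n,2n+j)$- and $(2n+1,2n+j)$-entries of $\sum_{i\geq 0}\tilde{P}_i$. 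Using Corollary \ref{C:1} together with $\tilde{P}_i = (-1)^i b^{-i} P_i$ and the identity $b^{q^{i-1}}=b$ (since $b\in\mathbb{F}_q^\times$), these four entries become precisely the four series appearing in \eqref{E:statement}, and expanding the $2\times 2$ determinant $\alpha_0\beta_1-\beta_0\alpha_1$ yields the claimed identity.

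The main obstacle is the identification $V_1/(V_1\cap\tilde{W})=V_2/(V_2\cap\tilde{W})$ in the second step; this is precisely where the hypothesis $2n+1\leq q$ enters the argument, via Theorem \ref{T:module}(i), which controls in which coordinates $\Log_{\tilde{G}_n}(e_i)-e_i$ can be non-zero. Without $\Log_{\tilde{G}_n}(e_i)-e_i\in\tilde{W}$ for \emph{every} $i$, the reduction from a $(2n+1)\times(2n+1)$ determinant to a $2\times 2$ one would not be possible, which is essentially the point noted in the remark at the end of \S1.3.
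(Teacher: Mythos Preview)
Your proposal is correct and follows essentially the same approach as the paper's proof: both reduce $L(M_\phi,n+1)$ to a Taelman $L$-value via Proposition~\ref{P:lseries}, use \eqref{E:proppp} together with Theorem~\ref{T:module}(i), Theorem~\ref{T:unit}, and Proposition~\ref{P:lattice} to reduce the full lattice index to a $2\times 2$ one on $\tilde{W}$, and then read off the four entries from Corollary~\ref{C:1} combined with Remark~\ref{R:coeff}. The only cosmetic difference is that the paper first carries this out for $b=-1$ (where $\tilde{G}_n=G_n$) before treating general $b\in\mathbb{F}_q^\times$, whereas you work directly with $\tilde{G}_n$ throughout.
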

	\begin{proof} 
		For $1\leq i \leq 2n+1$, let $\tilde{\lambda}_i=\Log_{\tilde{G}_n}(e_i)$ be as in the statement of Theorem \ref{T:unit}(ii). By \eqref{E:proppp} and Theorem \ref{T:module}(ii), we have 
		\begin{equation}\label{E:PROOF}
		\begin{split}
		[\Lie(\tilde{G}_n)(A)\cap \tilde{W}:U(\tilde{G}_n/A)\cap \tilde{W}]_{A}&=\frac{[\Lie(\tilde{G}_n)(A):U(\tilde{G}_n/A)]_{A}}{\Big[\frac{\Lie(\tilde{G}_n)(A)}{\Lie(\tilde{G}_n)(A)\cap \tilde{W}}:\frac{U(\tilde{G}_n/A)}{U(\tilde{G}_n/A)\cap \tilde{W}}\Big]_{A}}\\
		&=\frac{L(\tilde{G}_n/A)}{\beta}
		\end{split}
		\end{equation}
		where $\beta:=\Big[\frac{\Lie(\tilde{G}_n)(A)}{\Lie(\tilde{G}_n)(A)\cap \tilde{W}}:\frac{U(\tilde{G}_n/A)}{U(\tilde{G}_n/A)\cap \tilde{W}}\Big]_{A}\in K_{\infty}^{\times}$ and the last equality follows from \eqref{E:cnf}.  Theorem \ref{T:module}(i) implies that  $\tilde{P}_ke_i\in \tilde{W}$ for all $k\geq 1$ and $1\leq i \leq 2n+1$. Therefore we have by Theorem \ref{T:unit}(ii) and Proposition \ref{P:lattice} that $\frac{U(\tilde{G}_n/A)}{U(\tilde{G}_n/A)\cap \tilde{W}}\cong \oplus_{i=1}^{2n-1}A\cdot \tilde{\lambda}_i$ is generated by $\tilde{P}_0e_i=e_i$ for $1\leq i \leq 2n-1$ as an $A$-module (via the action of $\partial_{\tilde{\phi}_n}$) in $\frac{\Lie(\tilde{G}_n)(K_{\infty})}{\tilde{W}}$. Using Lemma \ref{L:l0} and Proposition \ref{P:lattice}, one can also obtain that $\frac{\Lie(\tilde{G}_n/A)}{\Lie(\tilde{G}_n/A)\cap \tilde{W}}\cong \oplus_{i=1}^{2n-1}A\cdot e_i$ is generated by $e_i$ for $1\leq i \leq 2n-1$ as an $A$-module (via the action of $\partial_{\tilde{\phi}_n}$) in $\frac{\Lie(\tilde{G}_n)(K_{\infty})}{\tilde{W}}$.
		Thus  we have $\beta=1$. 
		
		For $i,j\in\{0,1\}$,  let $\tilde{\lambda}_{2n+i,2n+j}$ be the $(2n+j)$-th coordinate of the element $\Log_{\tilde{G}_n}(e_{2n+i})$. Consider the matrix
		\[
		\Psi:=\begin{bmatrix}
		\tilde{\lambda}_{2n,2n}&\tilde{\lambda}_{2n+1,2n}\\\tilde{\lambda}_{2n,2n+1}&\tilde{\lambda}_{2n+1,2n+1}
		\end{bmatrix}\in \Mat_2(K_{\infty}).
		\]
		Now by Proposition \ref{P:lattice}, after applying the projection map onto the last two coordinates to the $A$-lattices in the left hand side of \eqref{E:PROOF}, we see that 
		\begin{equation}\label{E:calculation}
		\begin{split}
		&L(\tilde{G}_n/A)\\
		&\ \ \ \ =[\Lie(\tilde{G}_n)(A)\cap \tilde{W}:U(\tilde{G}_n/A)\cap \tilde{W}]_{A}\\
		&\ \ \ \ =\det(\Psi)\\
		&\ \ \ \ =\bigg(\sum_{i=0}^{\infty}\frac{(-1)^{i}b^{-i}\gamma_i}{L_i^{n}}\bigg)\bigg(1+\sum_{i=1}^{\infty}\frac{(-1)^{i}b^{-(i-1)}F_{i-1}}{L_i^{n}}\bigg)\\
		&\ \  \ \ \ \  \ \ \  \ \ \  -\bigg(\sum_{i=1}^{\infty}\frac{(-1)^{i}b^{-(i-1)}\gamma_{i-1}}{L_i^{n}}\bigg)\bigg(\sum_{i=0}^{\infty}\frac{(-1)^{i}b^{-i}F_i}{L_i^{n}}\bigg)
		\end{split}
		\end{equation}
		where the last equality follows from combining Corollary \ref{C:1} with the fact that $\tilde{P}_i=(-1)^{i}b^{-i}P_i$ for $i\geq 0$ and $P_i$ is the $i$-th coefficient of the logarithm series of  $G_n=(\mathbb{G}_{a/K}^{2n+1},\phi_n)$. Note that by Proposition \ref{P:lseries}, we have 
		\[
		L(\tilde{G}_n/A)=L(M_\phi,n+1).
		\]
		Thus the result follows from \eqref{E:calculation}.
	\end{proof}

\end{document}